\newcommand \arccosh {{\rm arccosh}}
\newcommand \la {\lambda}
\newcommand \Var {\mathrm {Var}}
\newcommand \ee {{\mathbb E}}
\newcommand \Y {{\mathbb Y}}
\newcommand \SIN {{\mathcal S}}
\newcommand \J {{\mathcal J}}
\newcommand \Prob {{\mathbb P}}
\newcommand \pln {{\mathbb Pl}^{(n)}}
\newtheorem{theorem}{Theorem}[section]
\newtheorem{lemma}[theorem]{Lemma}
\newtheorem{corollary}[theorem]{Corollary}
\newtheorem{proposition}[theorem]{Proposition}
\title[ The Vershik-Kerov Conjecture]{On the Vershik-Kerov Conjecture
Concerning \\the~
Shannon-McMillan-Breiman~Theorem \\
for the
Plancherel Family of Measures \\
on the Space of Young Diagrams.}
\author{Alexander I. Bufetov}
\address{Rice University, Houston}
\address{The Steklov Institute of Mathematics, Moscow}
\address {The Institute for Information Transmission Problems, Moscow}
\address{The Higher School of Economics, Moscow}
\address{The Independent University of Moscow}
\date{}
\begin{document}
\maketitle
\begin{center}
{\it To Alesha}
\end{center}

\tableofcontents
\section{Introduction.}

\subsection{The Vershik-Kerov Conjecture.}

Let $n\in {\mathbb N}$ and let $\Y_n$ be the set of Young diagrams with $n$ cells. For $\la\in\Y_n$ let $\dim\la$ be the dimension of the
irreducible representation of the symmetric group on $n$ elements
corresponding to $\la$. The {\it Plancherel} probability
measure $\pln$ on $\Y_n$ is given by the formula
$$
\pln(\la)=\frac{\dim^2\la}{n!}.
$$

In 1985 Vershik and Kerov \cite{VK}  showed that there exist two positive constants
$\alpha_1, \alpha_2$ such that
$$
\lim\limits_{n\to\infty} \pln\{\la\in\Y_n: \alpha_1\sqrt{n}\leq -\log\pln(\la)\leq \alpha_2\sqrt{n}\}=1.
$$
and conjectured that the sequence of random variables $\frac{-\log\pln(\la)}{\sqrt{n}}$
converges to a constant according to the Plancherel measure.

The main result of this paper is the proof of the Vershik-Kerov conjecture:
\begin{theorem}
\label{main}
There exists a constant $H>0$ such that for any $\varepsilon>0$
we have
\begin{equation}
\label{vkconjecture}
\lim\limits_{n\to\infty} \pln\left\{\la\in\Y_n:
\left|H+\frac{\log\pln(\la)}{\sqrt{n}}\right|\leq \varepsilon\right\}=1.
\end{equation}
\end{theorem}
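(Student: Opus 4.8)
\emph{Outline of the proof.} The plan is to combine a sharp law of large numbers for $-\log\pln(\la)$ with a concentration estimate, both obtained from the determinantal structure of the poissonized Plancherel measure, and then to pass back to fixed $n$. For $\theta>0$ let $\Pl_\theta$ denote the poissonized Plancherel measure on $\Y=\bigcup_n\Y_n$, $\Pl_\theta(\la)=e^{-\theta^2}\theta^{2|\la|}\bigl(\dim\la/|\la|!\bigr)^2$. Then $|\la|$ is Poisson$(\theta^2)$-distributed under $\Pl_\theta$, one has $\Pl_\theta(\,\cdot\mid|\la|=n)=\pln$, and the elementary identity
\[
-\log\pln(\la)=-\log\Pl_\theta(\la)-\theta^2+2|\la|\log\theta-\log\bigl(|\la|!\bigr)
\qquad (n=|\la|)
\]
holds for every $\la$. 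On $\{|\la|=n\}$ with $\theta=\sqrt n$ the last three terms amount to $-\tfrac12\log n+O(1)$, so \eqref{vkconjecture} reduces, after de-poissonization, to a concentration statement for $-\log\Pl_\theta(\la)$ under $\Pl_\theta$.

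To make the quantity explicit I would combine the hook-length formula with the Frobenius formula for $\dim\la$ in terms of the Frobenius coordinates $(p_i\mid q_i)$ of $\la$, obtaining the finite-sum identity
\begin{multline*}
-\log\pln(\la)=-\log n!+2\sum_{i,j}\log(p_i+q_j+1)\\
+2\sum_i\log\Gamma(p_i+1)+2\sum_j\log\Gamma(q_j+1)\\
-2\sum_{i<j}\log(p_i-p_j)-2\sum_{i<j}\log(q_i-q_j).
\end{multline*}
This displays $-\log\pln(\la)$, after removal of $\log n!$, as a statistic of degree at most two in the point configuration $\mathfrak S(\la)=\{\la_i-i+\tfrac12\}\subset\mathbb Z+\tfrac12$, where the $p_i+\tfrac12$ are the positive points of $\mathfrak S(\la)$ and the $q_j+\tfrac12$ record its negative holes reflected to the positive axis. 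Under $\Pl_\theta$ the configuration $\mathfrak S(\la)$ is a determinantal point process with the discrete Bessel kernel $\J(\cdot,\cdot;\theta^2)$, which is an orthogonal projection (Borodin--Okounkov--Olshanski). Substituting the one-point function $\J(x,x;\theta^2)$ and the two-point function $\J(x,x;\theta^2)\J(y,y;\theta^2)-\J(x,y;\theta^2)^2$ (and their particle--hole analogues) into the six sums, and feeding in the $\theta\to\infty$ behaviour of $\J(\cdot,\cdot;\theta^2)$ --- the limiting density on the band $[-2\theta,2\theta]$, the sine-kernel regime in the bulk and the Airy regime on the $\theta^{1/3}$-scale windows at $\pm2\theta$ --- together with Stirling's formula applied to $\log n!$ and the $\log\Gamma$ terms, one evaluates $\ee_{\Pl_\theta}[-\log\pln(\la)]$. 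The contributions of orders $\theta^2\log\theta$ and $\theta^2$ cancel identically --- this is the quantitative form of the Vershik--Kerov limit shape --- leaving $H\theta\,(1+o(1))$, which defines $H$; and $H\geq\alpha_1>0$ by the estimate quoted in the introduction.

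For the concentration one shows $\Var_{\Pl_\theta}\bigl(-\log\pln(\la)\bigr)=o(\theta^2)$ (in fact, with more work, $O$ of a power of $\log\theta$). For the linear pieces one uses the projection-kernel identity $\Var\bigl(\sum_x g(x)\bigr)=\tfrac12\sum_{x,y}\bigl(g(x)-g(y)\bigr)^2\J(x,y;\theta^2)^2$, which for the slowly varying, only logarithmically growing functions $g$ occurring here is small thanks to the off-diagonal decay of the Bessel kernel; for the genuinely bilinear ``log-Vandermonde'' pieces $\sum_{i<j}\log(p_i-p_j)$ one estimates the variance through the three- and four-point cumulants of the process, again controlled by the decay of $\J(\cdot,\cdot;\theta^2)$, and the cross terms by Cauchy--Schwarz. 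Chebyshev's inequality then yields $-\log\Pl_\theta(\la)/\theta\to H$ in $\Pl_\theta$-probability. Finally one de-poissonizes: with $\theta=\sqrt n$, on $\{|\la|=n\}$ the elementary terms are negligible, and the concentration is transferred through the conditioning event, of probability $\asymp n^{-1/2}$ --- either because the variance bound is far smaller than $\sqrt n$ and hence survives the conditioning, or by re-running the mean and variance computations directly for $\pln$, whose correlation functions are known to be uniformly close to those of $\Pl_{\sqrt n}$. This establishes \eqref{vkconjecture}.

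The step I expect to be the main obstacle is the variance bound for the bilinear $\log$-terms: it needs control of $\J(x,y;\theta^2)$ that is \emph{uniform in $\theta$} and valid simultaneously in the bulk and in the edge windows, together with a careful analysis of the four-point contributions, which is delicate because $\log$ is not a smooth test function. A second point requiring care is making the de-poissonization quantitative enough not to be defeated by the $n^{-1/2}$ factor from conditioning. By contrast, the combinatorial reduction to a degree-two statistic is elementary, and the evaluation of the mean, though long, is a routine Vershik--Kerov-type computation.
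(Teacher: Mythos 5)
Your skeleton (poissonize, use the discrete Bessel determinantal structure, de-poissonize at $\theta=\sqrt n$) is the same as the paper's, and your Frobenius-coordinate identity for $-\log\pln(\la)$ is correct; the genuine divergence is that you bypass the Vershik--Kerov variational formula \eqref{vkformula} and try to get both the mean and the concentration directly from kernel asymptotics applied to the six sums. That is where the proposal has a real gap, not just a hard step. For the mean you need $\ee\bigl[-\log\pln(\la)\bigr]=H\sqrt n+o(\sqrt n)$, i.e.\ relative precision $o\bigl(n^{-1/2}\log^{-1}n\bigr)$ inside quantities of size $n\log n$. ``Feeding in'' the sine/Airy/Debye behaviour of $\J(\cdot,\cdot;\theta^2)$ cannot deliver this: the one-point function is controlled pointwise only to $O(1/\sqrt n)$ (cf.\ \eqref{plnarccos}), and summing such errors against weights of size up to $\sqrt n\log n$ over $\sim\sqrt n$ sites (and over $\sim n$ pairs for the bilinear terms) produces an error of order $\sqrt n\log n$, swamping the $H\sqrt n$ you are trying to extract. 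The cancellation of the $\theta^2\log\theta$ and $\theta^2$ orders must therefore be performed \emph{exactly}, at the level of identities, before any asymptotics are used; that is precisely what the Vershik--Kerov hook-integral expansion does, rewriting $-\log\pln(\la)/\sqrt n$ exactly as nonnegative order-one functionals (hook-length counts, the seminorm $||F_{\la}||_{1/2}$, a boundary term) plus a deterministic $\varepsilon_n\to0$, so that only leading-order statements about those functionals are needed afterwards.

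The concentration step as you describe it would fail even granting the mean. You propose to bound the variance of each piece separately and control cross terms by Cauchy--Schwarz, but the individual pieces are not small: for $g$ supported on the positive semi-axis with $g(x)=\log\Gamma(x+1)$, the exact projection identity $\Var_{\J(\theta^2)}\bigl(\sum_x g(x)c_x\bigr)=\tfrac12\sum_{x,y}\bigl(g(x)-g(y)\bigr)^2\J(x,y;\theta^2)^2$ (the same identity $\J(x,x;\theta^2)=\sum_y\J(x,y;\theta^2)^2$ the paper uses in the proof of Lemma \ref{varest}), combined with the $|x-y|^{-1}$ decay of the kernel in the bulk, gives a quantity of order $n\log^2 n$ --- in any case not $o(n)$ --- because $g$ has derivative $\sim\log n$ over a range of length $\sim\sqrt n$. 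Hence Cauchy--Schwarz on cross terms can never yield $o(\theta^2)$, let alone the polylogarithmic bound your ``condition on $|\la|=n$'' de-poissonization needs to beat the $n^{-1/2}$ probability of the conditioning event; the smallness lives only in the specific combination, and exploiting that cancellation before estimating fluctuations is again the content of \eqref{vkformula}. Finally, your fallback --- that the correlation functions of $\pln$ ``are known to be uniformly close'' to those of $\Pl_{\sqrt n}$ --- assumes exactly what de-poissonization must prove; the workable route is the analytic lemma of Borodin--Okounkov--Olshanski (Lemma \ref{depoisson}), which requires estimates for \emph{complex} $\theta$ on the circle $|z|=n$ with factors $e^{\gamma|\theta-\sqrt n|}$, and nothing in your outline supplies these. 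So the combinatorial reduction you call elementary is fine, but the two steps you flag as ``routine'' (the mean) and ``delicate but doable piecewise'' (the variance) are, respectively, unobtainable by pointwise asymptotics and structurally wrong as organized; the paper's detour through the variational formula is what removes both obstacles.
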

Theorem \ref{main} immediately implies
\begin{corollary}[Asymptotic Equidistribution for the Plancherel Measure]
For any $\varepsilon>0$ there exists $n_0>0$ such that for any $n\in {\mathbb N}$, $n>n_0$, there
exists a subset ${\mathbb Y}_n(\varepsilon)\subset {\mathbb Y}_n$ with the following properties:
\begin{enumerate}
\item The cardinality $\#{\mathbb Y}_n(\varepsilon)$ of the set ${\mathbb Y}_n(\varepsilon)$
satisfies the inequality
$$
e^{(H-\varepsilon)\sqrt{n}}\leq \#{\mathbb Y}_n(\varepsilon)\leq e^{(H+\varepsilon)\sqrt{n}}.
$$
\item For each $\la\in{\mathbb Y}_n(\varepsilon)$ we have
$$
e^{-(H+\varepsilon)\sqrt{n}}\leq \pln(\la)\leq e^{-(H-\varepsilon)\sqrt{n}}.
$$
\end{enumerate}

\end{corollary}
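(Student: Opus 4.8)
The Corollary is an immediate consequence of Theorem \ref{main}. Given $\varepsilon>0$, I would take, for $n$ large, ${\mathbb Y}_n(\varepsilon)=\{\la\in\Y_n:|H+\log\pln(\la)/\sqrt n|\le\varepsilon/2\}$, which by Theorem \ref{main} satisfies $\pln({\mathbb Y}_n(\varepsilon))\ge\tfrac12$ once $n>n_0$. Property (2) is then the very definition of ${\mathbb Y}_n(\varepsilon)$, with $\varepsilon/2$ in place of $\varepsilon$; and since $\#{\mathbb Y}_n(\varepsilon)\,e^{-(H+\varepsilon/2)\sqrt n}\le\pln({\mathbb Y}_n(\varepsilon))\le\#{\mathbb Y}_n(\varepsilon)\,e^{-(H-\varepsilon/2)\sqrt n}$ while $\tfrac12\le\pln({\mathbb Y}_n(\varepsilon))\le1$, enlarging $n_0$ so that $\tfrac\varepsilon2\sqrt n\ge\log 2$ yields $e^{(H-\varepsilon)\sqrt n}\le\#{\mathbb Y}_n(\varepsilon)\le e^{(H+\varepsilon)\sqrt n}$, which is property (1). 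It remains to prove Theorem \ref{main}, which I would organise as follows.

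\emph{Reduction to a determinantal ensemble.} By the hook-length formula $\dim\la=n!/\prod_{c\in\la}h(c)$, so $-\log\pln(\la)=2\sum_{c\in\la}\log h(c)-\log n!$. Encode $\la\in\Y_n$ by its Maya diagram $P(\la)=\{\la_i-i+\tfrac12:i\ge1\}\subset{\mathbb Z}+\tfrac12$: under the classical bijection of cells of $\la$ with pairs (particle, hole) $\,p\in P(\la),\ h\notin P(\la),\ p>h$, the multiset of hook lengths is $\{p-h\}$ and $n=\#\{(p,h):p>h\}$. I would then Poissonize the number of boxes with parameter $\theta^2$: by the theorem of Borodin--Okounkov--Olshanski the random set $P$ becomes the determinantal point process on ${\mathbb Z}+\tfrac12$ with the discrete Bessel kernel $\J(x,y;\theta^2)$, and a de-Poissonization argument (in the spirit of Johansson) reduces Theorem \ref{main} to the corresponding statement for this ensemble, with $|\la|$ in place of $n$ and $\theta$ in place of $\sqrt n$. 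On a high-probability event $\mathcal G_\theta$ — where $\,\bigl||\la|-\theta^2\bigr|$ is moderate and $\la_1,\ell(\la)\le(2+\delta)\theta$ — Stirling gives $\log|\la|!=2|\la|\log\theta-\theta^2+O(\log\theta)$, so on $\mathcal G_\theta$
\[
-\log\Pl^{(|\la|)}(\la)=\theta^2+\Psi+O(\log\theta),\qquad \Psi:=2\!\!\!\sum_{p\in P,\ h\notin P,\ p>h}\!\!\!\log\frac{p-h}{\theta}.
\]
Thus $-\log\Pl^{(|\la|)}(\la)$ is, up to $o(\theta)$, a \emph{bounded}-weight bilinear statistic of the Bessel process plus the constant $\theta^2$, and the theorem becomes the claim that $\Psi=-\theta^2+H\theta+o(\theta)$ in probability.

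\emph{Moments.} Using $\Prob[x\in P]=\J(x,x;\theta^2)$, $\Prob[x\in P,\,y\notin P]=\J(x,x)(1-\J(y,y))+\J(x,y)^2$, and the $\theta\to\infty$ asymptotics of the discrete Bessel kernel — discrete-sine behaviour with density $\rho(u)=\tfrac1\pi\arccos u$ in the bulk ($x\approx2\theta u$, $|u|<1$), Airy behaviour in the edge windows $x\approx\pm2\theta$ of width $\sim\theta^{2/3}$ — I would compute $\ee_\theta\Psi$. Its leading term is exactly $-\theta^2$: equivalently, the Logan--Shepp--Vershik--Kerov curve is critical for the hook functional, $1+2\log2+8\iint_{u>v}\log(u-v)\,\rho(u)(1-\rho(v))\,du\,dv=0$. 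The order-$\theta$ remainder, collected from the $\tfrac1\theta$-correction to the bulk density, the sine-kernel term $\sum_{p>h}\log(p-h)\,\J(p,h)^2$, and the edge windows, equals $H\theta$; this is how $H$ is defined, and one checks $H>0$. For concentration I would linearize: writing $\xi_z=\mathbf{1}[z\in P]$, $\tilde\xi_z=\xi_z-\ee\xi_z$,
\[
\Psi-\ee_\theta\Psi=\sum_z G(z)\,\tilde\xi_z-\sum_{p>h}g(p,h)\bigl(\tilde\xi_p\tilde\xi_h-\ee[\tilde\xi_p\tilde\xi_h]\bigr),
\]
with $g(p,h)=2\log\frac{p-h}{\theta}$ and $G(z)=\sum_{h<z}g(z,h)(1-\J(h,h))-\sum_{p>z}g(p,z)\J(p,p)$. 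The crucial fact is that $G(z)=4\theta\,\gamma(z/2\theta)+o(\theta)$ uniformly in $z$, where $\gamma'(s)=\log2-\int_{-1}^{1}\log|s-t|\,\rho'(t)\,dt$ — and this \emph{vanishes identically} on $[-1,1]$, because the logarithmic potential of the arcsine measure $\rho'(t)\,dt=-\,dt/(\pi\sqrt{1-t^2})$ equals the constant $\log2$ there. Hence $G$ is constant up to $o(\theta)$, and $(\mathrm{const})\cdot\sum_z\tilde\xi_z=0$, since the total charge $\sum_z(\xi_z-\mathbf{1}[z<0])$ vanishes identically. The leading linear fluctuation thus disappears, leaving only the $o(\theta)$-order correction to the linear part (from the $\tfrac1\theta$-corrections to $\rho$ and the edge windows) and the genuinely quadratic term, which I would show have variance $o(\theta^2)$; Chebyshev and de-Poissonization then finish the proof.

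\emph{The main obstacle} is the last variance estimate. The quadratic remainder $\sum_{p>h}g(p,h)(\tilde\xi_p\tilde\xi_h-\ee[\tilde\xi_p\tilde\xi_h])$ is a genuine four-point object; its variance is a sum over quadruples $(p,h,p',h')$ of $g(p,h)g(p',h')$ times products of up to four values of $\J$, there are $\sim\theta^4$ quadruples, and each kernel factor is only $O(1)$ — so the ``diagonal'' and ``coincident-point'' blocks of the sum are each separately of order $\theta^2$ and only cancel in combination, via the same arcsine-potential phenomenon, now at the level of second moments. Carrying this out requires $\theta$-uniform asymptotics of $\J(x,y;\theta^2)$ to precision $o(1)$ throughout the active band, in particular a careful treatment of the sine-to-Airy crossover in the edge windows (where $|g|$ is largest), together with control of the $\log^2$-weights near coincidences $p-h=O(1)$ (harmless, as $\sum_d d^{-2}\log^2 d<\infty$). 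A further, non-routine point is the first-moment computation itself: because $H$ is a \emph{subleading} quantity, one must first prove that the whole $\Theta(\theta^2)$ term cancels before $H$ can even be read off.
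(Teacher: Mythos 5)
Your first paragraph is exactly the paper's own argument: the paper states the Corollary as an immediate consequence of Theorem \ref{main}, and the intended justification is precisely your counting step — take $\Y_n(\varepsilon)=\{\la:|H+\log\pln(\la)/\sqrt n|\le\varepsilon/2\}$, use $\pln(\Y_n(\varepsilon))\ge\tfrac12$ for large $n$, read off property (2) from the definition, and squeeze the cardinality between $\pln(\Y_n(\varepsilon))e^{(H-\varepsilon/2)\sqrt n}$ and $e^{(H+\varepsilon/2)\sqrt n}$, absorbing the factor $\tfrac12$ once $\varepsilon\sqrt n/2\ge\log 2$. As a proof of the Corollary given Theorem \ref{main}, this is correct and is the same route as the paper's.

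The bulk of your text, however, is a proposed proof of Theorem \ref{main} itself, and there you depart from the paper and the argument is not complete. The paper does not work directly with the statistic $\Psi=2\sum_{p>h}\log\frac{p-h}{\theta}$; it first invokes the Vershik--Kerov variational formula (\ref{vkformula}), which expands the hook integral around the Logan--Shepp--Vershik--Kerov curve and writes $-\log\pln(\la)/\sqrt n$ as a hook-length term, the quadratic term $\tfrac1{8\sqrt n}\|F_\la\|_{1/2}$, and a small boundary term. This has the effect that the cancellation of the leading $\Theta(n)$ contribution, and the vanishing of the linear fluctuation that you must extract via the arcsine logarithmic potential, are already built into the identity; what remains are local averages, handled by the sine-kernel limit together with the correlation-decay bound (Lemmas \ref{correlplan}, \ref{localergodic}, \ref{hooklength}, \ref{lochone}), a tail term handled by a variance estimate for the Bessel process via Okounkov's contour integral (Lemmas \ref{tailh}, \ref{varest}, \ref{besselmain}), and the elementary Lemma \ref{lishnijterm}. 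In your route the two genuinely hard points — the proof that the whole $\Theta(\theta^2)$ term cancels and that the order-$\theta$ remainder identifies $H$, and the $o(\theta^2)$ variance bound for the quadratic statistic $\sum_{p>h}g(p,h)(\tilde\xi_p\tilde\xi_h-\ee[\tilde\xi_p\tilde\xi_h])$ with logarithmic weights active up to the Airy edge — are stated as goals, and you yourself flag the second as the main obstacle; neither is carried out, and the needed uniform kernel asymptotics through the sine-to-Airy crossover are not supplied. So as an argument for the Corollary alone you are in agreement with the paper, but as an argument for Theorem \ref{main} your proposal is a plausible alternative programme with a genuine gap precisely where the paper instead passes through the variational formula and the variance estimates of Section \ref{prooftailh}.
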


Vershik and Kerov have suggested to call $H$
the {\it entropy} of the Plancherel measure.
Numerical experiments allowing to estimate the entropy $H$  of the Plancherel measure are given in \cite{VP}.
An explicit formula for $H$ is given below in (\ref{formulaalpha}).
\subsection{The Shannon-McMillan-Breiman Theorem}
The term ``entropy'' is suggested by the following analogy. Let $W(N)$ be the set of all binary words of length $N$. Take $p \in (0,1)$, and let $\mathbb P_{N,p}$ be the Bernoulli measure which to a word $w \in W(N)$ with $k$ zeros assigns the weight
\[
\mathbb P_{N,p} (w) \;=\; p^k \cdot (1-p)^{N-k}.
\]
Let
\[
H(p) \;=\; -p\log p - (1-p)\log(1-p)
\]
be the entropy of the Bernoulli measure. Shannon's Theorem then says that for any $p\in (0,1)$ and any $\varepsilon>0$ we have
\[
\lim_{N \to \infty} \mathbb P_{N,p} \left( \left\{
w \in W(n): \, \left|
\frac{\log \mathbb P_{N,p}(w)}{N} + H(p) \right|>\varepsilon
\,\right\} \right) =0.
\]

Using Kolmogorov's Strong Law of Large Numbers,
Shannon's Theorem can be strengthened to a pointwise statement, the Shannon-McMillan-Breiman Theorem (for a detailed discussion of the Shannon-McMillan-Breiman Theorem see e.g. \cite{CSF}, \cite{coverthomas}, \cite{petersen}). Similarly, Vershik and Kerov have given a pointwise analogue of their conjecture. Let $\mathbb T(\mathbb Y)$ be the space of infinite {\it Young tableaux}, that is, infinite directed paths in the Young graph starting at the origin.
The sequence $\left({\pln}\right)_{n\in {\mathbb N}}$ gives rise to a natural Markov measure on $\mathbb T(\mathbb Y)$; that
measure is denoted $\mathbb Pl$ and called the Plancherel measure on $\mathbb T(\mathbb Y)$ (see \cite{VK} for details). Vershik and Kerov conjectured that for $\mathbb Pl$-almost all paths $$(\lambda^{(n)})_{n \in \mathbb N} \in \mathbb T(\mathbb Y)$$ we have
\[
\lim_{n \to \infty} \frac{-\log \mathbb Pl^{(n)} (\lambda^{(n)})}{\sqrt{n}} \;=\; H.
\]
The pointwise conjecture remains open.


 Convergence in $L_p$ for $p<\infty$, on the other hand, can be obtained simultaneously with the convergence in measure. Let $\ee_{\pln}$ stand for the expectation with respect to the Plancherel measure.
\begin{corollary}
\label{maincorol}
There exists a constant $H>0$ such that for any $p$, $0<p<\infty$,
we have
$$
\lim\limits_{n\to\infty} \ee_{\pln}
\left|H+\frac{\log\pln(\la)}{\sqrt{n}}\right|^p=0.
$$
\end{corollary}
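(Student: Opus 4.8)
The plan is to deduce Corollary~\ref{maincorol} from Theorem~\ref{main} by a standard truncation argument, the only extra ingredient being a uniform-integrability bound on the random variables $X_n(\la) := -\log\pln(\la)/\sqrt{n}$. Convergence in probability (which Theorem~\ref{main} provides, with $X_n \to H$ in $\pln$-measure) upgrades to convergence in $L_p$ precisely when the family $\{|X_n - H|^p\}_{n}$ is uniformly integrable; and since $H$ is a constant, it suffices to show that $\{X_n^q\}_n$ is bounded in $L^1$ for some $q > p$, for every finite $p$. Equivalently, I would aim to prove that for every $r \in (0,\infty)$ there is a constant $C_r$ with $\ee_{\pln} X_n^r \le C_r$ uniformly in $n$; this gives uniform integrability of every fixed power and hence the conclusion for all $p<\infty$ at once.

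The key estimates are therefore moment bounds on $-\log\pln(\la)$. Write $-\log\pln(\la) = \log n! - 2\log\dim\la$. By the hook length formula, $\dim\la = n!/\prod_{c\in\la} h(c)$, so $-\log\pln(\la) = 2\sum_{c\in\la}\log h(c) - \log n!$. The upper tail is the easy direction: since every hook length satisfies $1 \le h(c) \le 2n-1$ (in fact $h(c)\le \la_1 + \la'_1 - 1 \le n$), one gets the deterministic bound $-\log\pln(\la) \le 2n\log(2n) - \log n! = O(n\log n)$, which already shows all moments of $X_n = -\log\pln(\la)/\sqrt n$ are $O(n^{r/2}(\log n)^r)$ — far too weak. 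To get the \emph{uniform} bound $\ee_{\pln}X_n^r = O(1)$ one must instead use the result of Vershik and Kerov quoted in the introduction, namely that $-\log\pln(\la) \le \alpha_2\sqrt n$ holds with probability tending to $1$, together with control of the contribution of the exceptional set. Concretely, I would split $\ee_{\pln}X_n^r = \ee_{\pln}(X_n^r \mathbf 1_{X_n \le 2\alpha_2}) + \ee_{\pln}(X_n^r \mathbf 1_{X_n > 2\alpha_2})$; the first term is $\le (2\alpha_2)^r$, and for the second I would use the deterministic bound $X_n \le C\sqrt n\log n$ together with a super-polynomial decay estimate $\pln\{X_n > 2\alpha_2\} \le C_k n^{-k}$ for every $k$ — such large-deviation bounds for $-\log\pln(\la)$ are exactly the kind of statement established en route to Theorem~\ref{main}, via the determinantal structure of the Plancherel measure (the discrete sine kernel / Borodin--Okounkov--Olshanski control of the edge) or via the RSK correspondence and longest-increasing-subsequence tail bounds. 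Then the second term is $\le C (\sqrt n\log n)^r \cdot C_k n^{-k} \to 0$.

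Assembling: for each fixed $p<\infty$ pick $r=p$; then $\{|X_n - H|^p\}_n$ is uniformly integrable because $\sup_n \ee_{\pln}|X_n-H|^{p+1} < \infty$ by the above (using $|X_n-H|^{p+1} \le 2^p(X_n^{p+1}+H^{p+1})$), and $X_n - H \to 0$ in probability by Theorem~\ref{main}; uniform integrability plus convergence in probability gives $\ee_{\pln}|X_n-H|^p \to 0$, which is the claim.

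The main obstacle is obtaining the uniform moment bound, i.e. ruling out that the rare diagrams on which $-\log\pln(\la)$ is much larger than $\sqrt n$ contribute a non-vanishing amount to $\ee_{\pln}X_n^r$. This is not automatic from convergence in measure and requires a genuine tail estimate on $-\log\pln(\la)$ of the form $\pln\{-\log\pln(\la) > t\sqrt n\}$ decaying fast enough in $t$ (faster than any power, or at least faster than $t^{-(r+1)}$ after accounting for the $\sqrt n\log n$ truncation of $X_n$). I expect this estimate to be available from the machinery developed for Theorem~\ref{main} — in particular from the control of $\sum_c \log h(c)$ afforded by the limit shape theorem together with fluctuation bounds around it — so that the corollary follows with little additional work once Theorem~\ref{main} and its supporting lemmas are in hand.
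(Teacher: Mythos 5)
Your reduction of the corollary to ``convergence in probability plus uniform integrability'' is the right skeleton, but your proof has a genuine gap at exactly the point you yourself flag as the main obstacle: the tail estimate $\pln\{X_n>2\alpha_2\}\le C_k n^{-k}$ is never established, only asserted to be ``the kind of statement established en route to Theorem~\ref{main}''. Nothing in the paper's determinantal machinery is stated in that form, and the 1985 Vershik--Kerov result you invoke only says that a certain probability tends to $1$, with no rate attached; so the decisive step of your argument is missing, not merely deferred, and your suggested sources for it (RSK and longest-increasing-subsequence tails, sine-kernel control of the edge, fluctuations around the limit shape) would require substantial extra work that the corollary does not need.

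What you overlooked is that the required tail bound is completely elementary. If $-\log\pln(\la)/\sqrt{n}>K$ then by definition $\pln(\la)<e^{-K\sqrt{n}}$, and by the Euler--Hardy--Ramanujan formula the number of Young diagrams with $n$ cells is at most $\exp\left(2\pi\sqrt{n}/\sqrt{6}\right)$; hence
$$
\pln\left\{\la\in\Y_n:\ \frac{-\log\pln(\la)}{\sqrt{n}}>K\right\}\ \le\ \exp\left(\left(-K+\frac{2\pi}{\sqrt{6}}\right)\sqrt{n}\right),
$$
which for any fixed $K>2\pi/\sqrt{6}$ decays exponentially in $\sqrt{n}$ -- much faster than the super-polynomial decay you were hoping to extract from the determinantal structure. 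Splitting $\ee_{\pln}\left|X_n-H\right|^p$ at such a $K$ (and controlling the exceptional set either by your deterministic bound $X_n=O(\sqrt{n}\log n)$ or simply by integrating the exponential tail in $K$) gives all the uniform integrability you need, and together with Theorem~\ref{main} the corollary follows at once. This one-line counting argument is precisely the paper's proof of Corollary~\ref{maincorol}; with it, your truncation scheme closes, and the heavy machinery you proposed is unnecessary.
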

Indeed, by the Euler-Hardy-Ramanujan Formula, the number of Young diagrams with $n$ cells
does not exceed  $\exp(2\pi \sqrt{n}/\sqrt{6})$, whence
$$
\pln\left\{\la:  \frac{-\log\pln(\la)}{\sqrt{n}}>K\right\}\leq
\exp((-K+2\pi/\sqrt{6})\sqrt{n}),
$$
and Corollary \ref{maincorol} is immediate from Theorem \ref{main} .
\subsection{Outline of the Proof of Theorem \ref{main}.}
The first step,
due to Vershik and Kerov \cite{VK}, is a variational formula for the normalized logarithm of the Plancherel measure (see Subsection \ref{vkvf}).
Using the hook formula, Vershik and Kerov represent the normalized
logarithm of the Plancherel measure as a special double integral, called {\it the hook integral}.
The hook integral admits a unique minimum -- the Vershik-Kerov-Logan-Shepp limit shape. The Vershik-Kerov variational formula  (\ref{vkformula}) is an explicit expression for the quadratic variation of the hook integral.

The next step is the Theorem established, independently and simultaneously, by Borodin, Okounkov and Olshanski \cite{BOO} and
Johansson \cite{johansson}, which claims that the poissonization of the Plancherel measure is the discrete Bessel determinantal point process. Using this Theorem, Borodin, Okounkov and Olshanski showed that local patterns in the bulk of a Plancherel Young diagram are governed by the discrete sine-process.

The Vershik-Kerov Variational Formula has two types of terms: the {\it local} terms and the {\it nonlocal} terms.
For the local terms, the Borodin-Okounkov-Olshanski Theorem is averaged along the boundary of the Young diagram, and it is shown that the normalized number of appearances of a given local pattern in a Young diagram converges to a constant with respect to the Plancherel measure (Lemma \ref{localergodic}). In particular, for $k \in \mathbb N$, it is shown that the normalized number of cells with hook length $k$ converges to a constant according to the Plancherel measure (Lemma \ref{hooklength}).
The proof relies on a simple upper estimate for the decay of correlations of the Plancherel measure (Lemma \ref{correlplan}).

The final step is to show that the nonlocal terms of the Vershik-Kerov formula converge to $0$ according to the Plancherel measure (Lemma \ref{tailh}, proved in Section \ref{prooftailh}).
The proof relies on upper estimates for the variance of the Bessel point process and the Plancherel measure, which are obtained using the classical contour integral representations for Bessel functions and the Okounkov contour integral representation for the discrete Bessel kernel (Section \ref{secbessel}).
\subsection{Acknowledgements.} Grigori Olshanski posed the problem to me
and suggested the poissonization approach; I am deeply grateful to him.
I am deeply grateful to Alexei  Borodin who suggested the use
of Okounkov's contour integral for the discrete Bessel kernel.
I am deeply grateful to Elena Rudo for her careful reading of the manuscript and
for many very helpful suggestions on improving the presentation.
I am deeply grateful to Sevak Mkrtchyan, Fedor Petrov, Alexander Soshnikov,
Konstantin Tolmachov and Anatoly M. Vershik
for helpful discussions. I am deeply grateful to the referee for many useful comments.
I am deeply grateful to Nikita Kozin for typesetting parts of the manuscript.
This work was supported in part by an Alfred P. Sloan Research Fellowship,
by the Grant MK-4893.2010.1 of the President of the Russian Federation,
by the Programme on Mathematical Control Theory of the Presidium of the Russian Academy of Sciences,
by the Programme 2.1.1/5328 of the Russian Ministry of Education and Research,
by the RFBR-CNRS grant 10-01-93115, by the RFBR grant 11-01-00654,
by the Edgar Odell Lovett Fund at Rice University and by the National Science Foundation under grant DMS~0604386.

\section{The Vershik-Kerov Variational Formula}
\subsection{The Limit Shape of Plancherel Young Diagrams.}

Take a Young diagram $\la=(\la_1, \la_2, \dots)$ (setting $\la_i=0$ for all large $i$).
Introduce a piecewise-linear function $\Phi_{\la}$
in the following way: we set $\Phi_{\la}^{\prime}|_{(k,k+1)}=-1$ if $k=\la_i-i$ for some $i$, we set
$\Phi_{\omega}^{\prime}|_{(k,k+1)}=1$ otherwise, and we require that the equality $\Phi_{\la}(t)=|t|$
hold for all sufficiently large $t$
(it is easy to see
that the continuous function $\Phi_{\la}$ is uniquely defined by these requirements;
it is differentiable except at integer points).

The function $\Phi_{\la}$ admits the following combinatorial interpretation.
Assume that the cells of our diagram are squares with diagonal $2$.
Following Vershik and Kerov \cite{VK}, rotate the diagram $\la$ by $\pi/4$;
the boundary of the rotated diagram forms the graph of
$\Phi_{\la}$, while ``beyond'' the diagram, for all sufficiently large $|t|$,  we have $\Phi_{\la}(t)=|t|$ (see Fig. 1 on p. 482 in \cite{BOO}).

Following Vershik and Kerov, introduce the function
$$
\Omega(t)=
\begin{cases}
\frac 2{\pi}(t\arcsin(t/2)+\sqrt{4-t^2}),&\text{if $|t|\leq 2$;}\\
                  |t|,&\text{ if $|t|> 2$, }
\end{cases}
$$
and denote
$$
F_{\la}(t)={\Phi}_{\la}(t)-\sqrt{n}\Omega(t/\sqrt{n}).
$$
By definition, the function $F_{\la}$ has compact support.
The functions ${\Phi}_{\la}$ and $\Omega$ are Lipschitz with
constant $1$, therefore the function $F_{\la}$  is Lipschitz
with constant $2$.

Vershik and Kerov \cite{VK77} and, independently and simultaneously,
Logan and Shepp \cite{loganshepp} have shown that for any $\varepsilon>0$
we have
$$
\lim\limits_{n\to\infty} \pln\{\la\in\Y_n:  |{F}_{\la}(t)/\sqrt{n}|
\leq \varepsilon\}=1.
$$

\subsection{The Quadratic Variation of the Hook Integral}
\label{vkvf}

Recall that the {\it hook length } of a cell in a Young diagram is
the number of cells to the right of it  and under it (including the cell itself)
and let $h_k(\la)$ stand for the number of cells in $\la$ with hook length $k$.

Denote
$$
||F_{\la}||_{1/2}=\int\limits_{-\infty}^{\infty}
\int\limits_{-\infty}^{\infty} \left(\frac{F_{\la}(t)-F_{\la}(s)}{t-s} \right)^2dtds=
2\int\limits_{0}^{\infty}
\int\limits_{-\infty}^{\infty} \left(\frac{F_{\la}(t+h)-F_{\la}(t)}{h} \right)^2dtdh.
$$
The Vershik-Kerov  Variational Formula(see \cite{VK}, Lemma 1 and formulas (5), (8), (9))
is the  equality
\begin{multline}
\label{vkformula}
\frac{-\log \pln(\la)}{\sqrt{n}}=\frac 1{\sqrt{n}}
\sum\limits_{k=1}^{\infty} h_k(\la)
\left(\sum\limits_{l=1}^{\infty} \frac1{l(l+1)(2l+1)k^{2l}}\right)+
\frac 1{8\sqrt{n}}||F_{\la}||_{1/2}+\\
+\frac1{\sqrt{n}}\int\limits_{|t|\geq 2\sqrt{n}} F_{\la}(t)\arccosh\left(\frac{t}{2\sqrt{n}}\right)dt -\varepsilon_n,
\end{multline}
where $\varepsilon_n$ only depends on $n$ (not on $\la$)
and tends to $0$ as $n\to\infty$.

It will be convenient for us to adopt the following terminology.
Assume that for each $n\in {\mathbb N}$ we are given a random variable
$\xi_n$ on $\Y_n$. If there exists $\beta$ such that for any
$\varepsilon>0$ we have
$$
\lim\limits_{n\to\infty} \pln\{\la\in\Y_n: |\xi_n(\la)-\beta|
\leq \varepsilon\}=1,
$$
then we say that $\xi_n$ converges to the constant  $\beta$ according to the
Plancherel measure.

If
$$
\lim\limits_{n\to\infty} \pln\{\la\in\Y_n: \xi_n(\la)<\beta\}=1,
$$
we say that $\xi_n$ is asymptotically majorated by $\beta$
according to the Plancherel measure.

We shall analyze the terms of the Vershik-Kerov Variational Formula one by one.

\begin{lemma}
\label{hooklength}
For any $k\in {\mathbb N}$ the random variables $\frac{h_k(\la)}{\sqrt{n}}$
converge to the  constant $\frac{32k^2}{(4k^2-1)\pi^2}$ according to the
Plancherel measure.
\end{lemma}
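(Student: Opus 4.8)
The plan is to realize $h_k(\la)$, the number of cells of $\la$ with hook length $k$, as a sum of a local statistic along the boundary of the Young diagram, and then to invoke the determinantal description of the poissonized Plancherel measure to compute the expectation and control the variance. Recall that in the Russian (rotated) coordinates, the boundary of $\la$ is encoded by the descent set $D(\la)=\{\la_i-i\}\subset\mathbb Z$, equivalently by the particle configuration $\{\la_i-i+\tfrac12\}$; the Borodin--Okounkov--Olshanski / Johansson Theorem asserts that after poissonization with parameter $\theta^2=n$ this becomes the discrete Bessel determinantal point process $\J(\cdot,\cdot;\theta^2)$. A cell of $\la$ with hook length $k$ corresponds to a pair of boundary edges — one ascending, one descending — at a prescribed horizontal separation determined by $k$; concretely, $h_k(\la)$ counts the positions $(x,y)$ with $x-y=k$ such that $x-1$ is a descent and $y$ is not (or the corresponding particle/hole pattern). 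Thus $h_k(\la)=\sum_{m\in\mathbb Z}\chi_k(m,\la)$, where $\chi_k(m,\la)\in\{0,1\}$ is a local indicator depending only on the configuration near site $m$.

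First I would fix the local pattern: express $\chi_k$ as an indicator of a fixed finite cylinder event in the particle configuration, and write $h_k(\la)=\sum_m \chi_k(m)$. Second, by the Borodin--Okounkov--Olshanski Theorem the poissonized expectation $\ee\,\chi_k(m)$ is given by a finite minor of the discrete Bessel kernel at sites near $m$, and along the bulk these minors converge, as $n\to\infty$ and $m/\sqrt n\to u\in(-2,2)$, to the corresponding minor of the discrete \emph{sine} kernel of density $\rho(u)=\tfrac1\pi\arccos(u/2)$. Integrating the limiting sine-kernel minor over the bulk $u\in(-2,2)$ and rescaling by $\sqrt n$, the poissonized expectation of $h_k(\la)/\sqrt n$ converges to an explicit integral which, after the substitution and elementary trigonometric evaluation, equals $\frac{32k^2}{(4k^2-1)\pi^2}$; one checks this constant is exactly the coefficient appearing when the hook integral of the limit shape $\Omega$ is expanded, so it is consistent with the Vershik--Kerov formula. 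Third — and this is where Lemma \ref{correlplan} enters — I would bound the variance of $\sum_m\chi_k(m)$ using the rapid decay of correlations of the (poissonized) Plancherel measure: since $\chi_k(m)$ and $\chi_k(m')$ are asymptotically independent with error decaying in $|m-m'|$, the variance is $o(n)$, so $h_k(\la)/\sqrt n$ concentrates around its mean under the poissonized measure. Finally, a de-poissonization step (using that the poissonized measure concentrates $|\la|$ near $n$, together with monotonicity/Lipschitz control of $h_k$ in $n$) transfers the convergence in probability from $\Pl_{\theta^2}$ back to $\pln$, yielding convergence to the constant according to the Plancherel measure.

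The main obstacle I expect is not the algebra of identifying the limiting constant, but making the averaging-along-the-boundary argument uniform: the sine-kernel approximation to the Bessel kernel degrades near the edge $|m|\approx 2\sqrt n$, and one must show that the contribution of the edge region (and of $|m|>2\sqrt n$) to both the mean and the variance of $h_k$ is negligible on the scale $\sqrt n$. This requires an edge estimate on the discrete Bessel kernel — controlling $\J(m,m;\theta^2)$ for $m$ in and beyond the Airy regime — together with the decay-of-correlations bound of Lemma \ref{correlplan} to kill the variance uniformly in the truncation. The de-poissonization, while standard, also needs the tail bound $\Pl_{\theta^2}\{|\,|\la|-n\,|>C\sqrt{n\log n}\}\to0$ and the crude bound $h_k(\la)\le|\la|$ to absorb the discrepancy.
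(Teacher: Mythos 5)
Your decomposition of $h_k$ into local boundary patterns is exactly the paper's route: the paper writes $h_k(\la)=\sum_i\bigl(c_i(\la)-c_i(\la)c_{i-k}(\la)\bigr)$ and deduces the lemma from an ergodic-type statement (Lemma \ref{localergodic}) whose proof combines the sine-kernel limit of the Bessel kernel in the bulk, the correlation-decay bound of Lemma \ref{correlplan}, a Riemann-sum passage to the integral $\int_{-2}^{2}\ee_{\SIN(a)}(c_0-c_0c_{-k})\,da=\frac{32k^2}{(4k^2-1)\pi^2}$, and edge control via Proposition \ref{ydelta}. Up to that point your plan and the paper coincide.

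The genuine gap is in your final step, where you propose to prove concentration of $h_k/\sqrt n$ under the poissonized measure and then ``de-poissonize the convergence in probability'' using concentration of $|\la|$ near $n$ together with monotonicity/Lipschitz control of $h_k$ in $n$. Neither ingredient is available. First, $h_k$ is not monotone along the growth process, and adding a single box changes the hook lengths of every cell in its row and column, so it can change $h_k$ by order $\sqrt n$; since the Poisson fluctuation of $|\la|$ is itself of order $\sqrt n$, the resulting drift in $h_k$ is of order $n$, which swamps the $\sqrt n$ normalization, so no Lipschitz transfer works. Second, the alternative of conditioning on $\{|\la|=n\}$ fails quantitatively: that event has poissonized probability of order $n^{-1/2}$, while the Chebyshev bound you would get from the covariance decay (variance of $h_k$ of order $\sqrt n$, hence deviation probability of order $n^{-1/2}$ at scale $\varepsilon\sqrt n$) is of the same order, not smaller, so nothing survives the conditioning. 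This is precisely why the paper de-poissonizes \emph{before} the probabilistic argument: it applies the analytic Borodin--Okounkov--Olshanski depoissonization lemma (Lemma \ref{depoisson}, which is why all kernel estimates are proved for complex $\theta$ with the factor $e^{\gamma|\theta-\sqrt n|}$) to the one- and two-point correlation functions themselves, obtaining the decay of correlations (\ref{xyplancherel}) and the estimate (\ref{eplnsin}) directly for $\pln$, and only then runs the second-moment/Chebyshev argument under $\pln$ (Proposition \ref{epslocal}). To repair your proof you would either have to adopt this order of operations or produce substantially stronger (e.g.\ higher-moment or exponential) concentration under the poissonized measure; the variance bound alone does not suffice.
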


\begin{lemma}
\label{lochone}
For any $h_0>0$ the random variables
$$
\frac 1 {\sqrt{n}}\int\limits_0^{h_0}\int\limits_{-\infty}^{\infty}
\left(\frac{F_{\la}(t+h)-F_{\la}(t)}{h} \right)^2 dt dh
$$
converge to a constant
according to the
Plancherel measure.
\end{lemma}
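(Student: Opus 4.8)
The plan is to separate the increment of $F_\la$ into a purely \emph{local} part and a deterministic, slowly varying part, and then to recognise the resulting normalised integral as an average of local pattern counts of the kind treated in Lemma \ref{localergodic}. Set $g_\la(t,h):=\big(\Phi_\la(t+h)-\Phi_\la(t)\big)/h$. Since $F_\la(t)=\Phi_\la(t)-\sqrt{n}\,\Omega(t/\sqrt{n})$,
\[
\frac{F_\la(t+h)-F_\la(t)}{h}=g_\la(t,h)-b_n(t,h),\qquad b_n(t,h):=\frac{\sqrt{n}\,\Omega\!\big((t+h)/\sqrt{n}\big)-\sqrt{n}\,\Omega\!\big(t/\sqrt{n}\big)}{h}.
\]
Here $b_n$ is deterministic, and $b_n(t,h)$ is the difference quotient of $\Omega$ at $t/\sqrt n$ with step $h/\sqrt n$; since $\Omega$ is $C^1$ with $\Omega'(x)=\frac2\pi\arcsin(x/2)$ for $|x|\le2$, $|\Omega'|\le1$, and $\Omega''$ bounded on $[-2+\delta,2-\delta]$, one has $|b_n(t,h)-\Omega'(t/\sqrt{n})|\le C_\delta\,h/\sqrt{n}$ whenever $|t|\le(2-\delta)\sqrt{n}$ and $0<h\le h_0$. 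On the other hand, for $t$ in a unit cell $[m,m+1)$ and $0<h\le h_0$ the value $g_\la(t,h)$ depends only on $\Phi_\la'$ restricted to the intervals $(k,k+1)$, $m\le k\le m+\lceil h_0\rceil$, i.e. only on the \emph{local pattern} $P_m=P_m(\la)$ recording which of the integers $m,\dots,m+\lceil h_0\rceil$ lie in $\{\la_i-i:\ i\ge1\}$; there are only $2^{\lceil h_0\rceil+1}$ such patterns.

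Since $F_\la$ is $2$-Lipschitz the integrand is everywhere $\le4$, and on an event of probability tending to $1$ --- namely $\{\la_1\le(2+\delta)\sqrt n\}\cap\{\#\{i:\la_i>0\}\le(2+\delta)\sqrt n\}$, which holds by the standard fact that the largest part and the number of parts of a Plancherel-random $\la$ are $2\sqrt n+o(\sqrt n)$ in probability --- the function $F_\la$ is supported in $[-(2+\delta)\sqrt n,(2+\delta)\sqrt n]$; hence the part of the normalised double integral coming from $|t|>(2-\delta)\sqrt n$ is $O(\delta h_0)$. On $|t|\le(2-\delta)\sqrt n$, replacing $b_n(t,h)$ by $\Omega'(m/\sqrt n)$ on the cell $[m,m+1)$ (using the Lipschitz bound for $b_n$, $|\Omega'(t/\sqrt n)-\Omega'(m/\sqrt n)|\le C_\delta/\sqrt n$, and $|g_\la|,|b_n|\le1$) changes the normalised integral by $o(1)$ and turns it into
\[
\frac1{\sqrt n}\sum_{|m|\le(2-\delta)\sqrt n}\Theta\!\Big(P_m,\tfrac m{\sqrt n}\Big),\qquad
\Theta(P,x):=\int_0^1\!\!\int_0^{h_0}\big(g_P(\tau,h)-\Omega'(x)\big)^2\,dh\,d\tau,
\]
where $g_P(\tau,h)$ is the common value of $g_\la(m+\tau,h)$ when $P_m=P$. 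Expanding the square, $\Theta(P,x)=A_P-2B_P\Omega'(x)+h_0\Omega'(x)^2$ with $A_P,B_P$ depending only on $P$ and $h_0$; in particular each $\Theta(P,\cdot)$ is continuous and bounded on $[-2,2]$.

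It remains to prove that $\frac1{\sqrt n}\sum_{|m|\le(2-\delta)\sqrt n}\Theta(P_m,m/\sqrt n)$ converges, according to the Plancherel measure, to a constant. Partition $[-2+\delta,2-\delta]$ into $K$ equal blocks $J$, with left endpoints $x_J$. Along the indices $m$ with $m/\sqrt n\in J$ each $\Theta(P,m/\sqrt n)$ is constant up to an error that is $o_K(1)$ uniformly in $n$ (by uniform continuity of the finitely many functions $\Theta(P,\cdot)$ on $[-2,2]$), so up to such an error the sum equals $\sum_J\sum_P\Theta(P,x_J)\cdot\#\{m:\ m/\sqrt n\in J,\ P_m=P\}/\sqrt n$. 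By Lemma \ref{localergodic} --- in the form obtained by averaging the Borodin--Okounkov--Olshanski sine-kernel asymptotics over the macroscopic sub-arc corresponding to $J$ and bounding the variance by the decorrelation estimate of Lemma \ref{correlplan} --- the normalised restricted count $\#\{m:\ m/\sqrt n\in J,\ P_m=P\}/\sqrt n$ converges, according to the Plancherel measure, to $\int_J\mu_P(x)\,dx$, where $\mu_P(x)$ is the probability of the pattern $P$ in the discrete sine process of density $\frac1\pi\arccos(x/2)$ (so $\sum_P\mu_P\equiv1$). Letting $n\to\infty$, then $K\to\infty$ (Riemann sums), then $\delta\to0$, we conclude that the random variables in the statement of Lemma \ref{lochone} converge, according to the Plancherel measure, to the finite constant $H_{h_0}=\int_{-2}^{2}\big(\sum_P\Theta(P,x)\,\mu_P(x)\big)\,dx$.

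The only substantial step is the last one: promoting the single-site sine-kernel limit of Borodin--Okounkov--Olshanski to an \emph{averaged, density-weighted} statement about the count of a fixed local pattern along a macroscopic sub-arc of the boundary, with enough uniformity to sum over the finitely many patterns and over a partition of the limit shape --- this is precisely the content of Lemma \ref{localergodic} and its proof. Everything preceding it is bookkeeping; I would emphasise that keeping $(g_\la-b_n)^2$ intact and merely truncating the negligible edge zone is what allows one to avoid cancelling individually divergent integrals, since throughout the relevant range of $t$ the integrand is a bounded, slowly modulated functional of a bounded-window local pattern.
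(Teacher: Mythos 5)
Your proof is correct and follows essentially the same route as the paper: write the increment of $F_\la$ as the local difference quotient of $\Phi_\la$ minus the $\Omega$-drift, replace the drift by $\frac2\pi\arcsin(t/2\sqrt n)$ up to $O(1/\sqrt n)$, cut off the edge zone using the concentration of $\la_1,\la_1'$ near $2\sqrt n$, and recognise the remaining normalised sum as a bounded local-pattern functional to which Lemma \ref{localergodic} (in the paper, its restricted-sum form, Proposition \ref{epslocal}) applies. Your block-partition/Riemann-sum reduction to pattern counts over sub-arcs is only a cosmetic variant of applying that lemma directly with the continuous weight $\Theta(P,\cdot)$.
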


\begin{lemma}
\label{tailh}
For any $\varepsilon>0$ there exists $h_0>0$ such that the random
variables
$$
\frac 1 {\sqrt{n}}\int\limits_{h_0}^{\infty}\int\limits_{-\infty}^{\infty}
\left(\frac{F_{\la}(t+h)-F_{\la}(t)}{h} \right)^2 dt dh
$$
are asymptotically majorated by $\varepsilon$
according to the Plancherel measure.
\end{lemma}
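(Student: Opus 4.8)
The plan is to control the tail of the $1/2$-Sobolev seminorm by combining a uniform bound on the \emph{expected} contribution of large increments with Chebyshev's inequality; the whole point is that the quantity in question is nonnegative, so it suffices to show that its expectation with respect to $\pln$ is $o(\sqrt n)$ once $h_0$ is large. Passing to the poissonized picture via the Borodin--Okounkov--Olshanski/Johansson theorem, I would express
$$
\ee_{\pln}\;\frac1{\sqrt n}\int_{h_0}^\infty\!\!\int_{-\infty}^\infty\left(\frac{F_{\la}(t+h)-F_{\la}(t)}{h}\right)^2 dt\,dh
$$
in terms of the two-point function of the discrete Bessel determinantal point process. Writing $F_\la = \Phi_\la - \sqrt n\,\Omega(\cdot/\sqrt n)$ and recalling that $\Phi_\la'$ is $\pm1$ with the $-1$'s sitting at the modified Frobenius coordinates of $\la$, the increment $F_\la(t+h)-F_\la(t)$ is, up to the deterministic limit-shape part, a signed count of particles of the Bessel process in the window $[t,t+h]$ minus its mean. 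Its second moment is therefore expressible through $\Var$ of that count, which is controlled by the Bessel kernel $J$ via the standard determinantal formula $\Var = \sum_{x} J(x,x)(1-J(x,x)) + \sum_{x\ne y}(\,\text{off-diagonal squares}\,)$, i.e. exactly the variance estimates the introduction promises in Section~\ref{secbessel}.

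The key steps, in order: (i) reduce to a bound on $\ee_{\pln}$ of the tail integral by Chebyshev, using nonnegativity; (ii) poissonize and rewrite $\ee$ of the integrand at fixed $(t,h)$ as $h^{-2}\,\Var$ of a particle count in $[t,t+h]$, plus a bias term coming from the discrepancy between $\Phi_\la$'s mean profile and $\sqrt n\,\Omega(\cdot/\sqrt n)$; (iii) use the contour-integral representation for the discrete Bessel kernel (Okounkov's formula) to get, uniformly in the bulk and in the two regimes $|t|\lesssim 2\sqrt n$ and $|t|\gtrsim 2\sqrt n$, an estimate of the form $\Var(\text{count in }[t,t+h]) \le C\min(h,\sqrt n)$ together with sharp decay of the kernel outside a $\sqrt n$-neighbourhood of the edge; (iv) integrate: $\int_{h_0}^\infty h^{-2}\min(h,\sqrt n)\,dt\,dh$ over the $O(\sqrt n)$-length effective range of $t$ yields $O(\sqrt n)\cdot\int_{h_0}^\infty h^{-2}\cdot(\text{something}\le h)\,dh$, which after the $1/\sqrt n$ normalization is $O(1)\cdot\int_{h_0}^\infty dh/h = $ — so I must be more careful and extract the genuinely decaying piece: the point is that $\Var(\text{count})/h^2$ is not $\asymp 1/h$ but, because of cancellation in the determinantal sum, is summably small in $h$ after integrating in $t$, giving a bound $\le C/h_0$ or $\le C/\sqrt{h_0}$; (v) depoissonize, using that the Poisson parameter concentrates at $n$ and that the tail functional is monotone/Lipschitz enough to transfer the estimate back to $\pln$ (the same de-poissonization lemma used for Lemmas~\ref{hooklength} and \ref{lochone}).

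The main obstacle is step (iv)--(v): getting a bound on $\ee_{\pln}$ of the tail integral that actually tends to $0$ as $h_0\to\infty$, rather than merely staying bounded. This requires exploiting the off-diagonal cancellation in the Bessel determinantal variance rather than the crude diagonal bound, and controlling the large-$t$ regime $|t|\ge 2\sqrt n$ where $\arccosh$-type growth in the Vershik--Kerov formula competes with the exponential decay of the kernel past the edge; the classical contour-integral representations of Bessel functions are exactly what is needed to make that competition quantitative. A secondary technical point is the de-poissonization of a functional that is an integral over an unbounded range of $h$ and $t$, which will need a uniform-in-$(t,h)$ version of the comparison between the Poissonized measure and $\pln$, and this is presumably where the decay-of-correlations estimate (Lemma~\ref{correlplan}) re-enters.
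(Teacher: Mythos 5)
Your skeleton does coincide with the paper's: bound the expectation of the nonnegative tail functional and finish by Markov's inequality, poissonize to the Bessel process, write the increments of $F_\la$ as centred particle counts plus a bias coming from the limit shape, estimate via the determinantal variance and contour integrals, then depoissonize. But what you have written is a plan, and the step you yourself call ``the main obstacle'' is precisely the mathematical content of the paper's proof of Lemma \ref{tailh}, and it is absent. The decisive ingredient is the averaged variance bound (Lemma \ref{varest}): from the projection identity $\J(x,x;\theta^2)=\sum_y \J(x,y;\theta^2)^2$ one gets $\Var_{\J(\theta^2)}(c_k+\dots+c_{k+l-1})=\sum_{x\in[k,k+l]}\sum_{y\notin[k,k+l]}\J(x,y;\theta^2)^2$, summation in $k$ gives $2\sum_{x<y}\min(y-x,l)\,\J(x,y;\theta^2)^2$, and the kernel bounds $|\J(x,y;\theta^2)|\lesssim |x-y|^{-1}$ in the bulk (plus separate estimates near and beyond the edge, Lemma \ref{bescontour}, Propositions \ref{onesix}, \ref{expest}) yield a bound of order $\log l_0/l_0$ after normalizing by $\sqrt n$. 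Your crude bound $\Var\le C\min(h,\sqrt n)$ diverges, as you note; and your fallback of re-invoking Lemma \ref{correlplan} would not rescue it, because covariance decay of order $1/(|i-j|+1)$ only gives $\Var(\text{count in a window of length }l)\lesssim l\log l$, and $\sum_{l>l_0}\log l/\,l$ still diverges --- it is the square-summable decay $\J(x,y)^2\lesssim|x-y|^{-2}$ coming from the exact determinantal identity that makes the tail summable. Likewise the bias requires the quantitative comparison $|\J(k,k;\theta^2)-\tfrac1\pi\arccos(k/2\sqrt n)|\lesssim (2\sqrt n-|k|)^{-1}e^{\gamma|\theta-\sqrt n|}$ (Lemma \ref{besselmain}, proved via Okounkov's double contour integral), which you do not supply.

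Two further points where your plan drifts from what actually works. First, the edge: in Lemma \ref{tailh} there is no competition with the $\arccosh$ term --- that is the separate term (\ref{vybrosterm}), disposed of in Lemma \ref{lishnijterm}; here the edge is removed deterministically, by restricting to the high-probability event $\la_1,\la_1'\le 2\sqrt n+Kn^{\delta}$ (Proposition \ref{ydelta}) and truncating $F_\la$ at $|t|\le 2\sqrt n-Ln^{\delta}$ (Lemma \ref{discrcut}), at a cost $O(n^{2\delta-1/2})$ after normalization. Second, depoissonization is not a uniform-in-$(t,h)$ comparison and does not use Lemma \ref{correlplan}: the truncated, discretized double sum is treated as a single entire function of $z=\theta^2$ and the Borodin--Okounkov--Olshanski lemma (Lemma \ref{depoisson}, Corollary \ref{depoisan}) is applied once --- which is why every poissonized estimate must be proved for complex $\theta$ with the weight $e^{\gamma|\theta-\sqrt n|}$, a constraint your outline does not address. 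Until the variance and bias estimates are actually established in this form, the proposal does not constitute a proof.
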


\begin{lemma}
\label{lishnijterm}
The random variables
\begin{equation}
\label{vybrosterm}
\frac 1{{\sqrt{n}}}\int\limits_{|t|\geq 2\sqrt{n}} F_{\la}(t)\arccosh\left(\frac{t}{2\sqrt{n}}\right)dt
\end{equation}
converge to $0$ according to the Plancherel measure.
\end{lemma}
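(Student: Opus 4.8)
The plan is to reduce Lemma~\ref{lishnijterm} to an upper tail bound on the largest part $\la_1$ of $\la$, since the integrand in~(\ref{vybrosterm}) is supported on a tiny window just outside the limit shape. Because $\Omega(s)=|s|$ for $|s|\ge 2$, we have $\sqrt n\,\Omega(t/\sqrt n)=|t|$ for $|t|\ge 2\sqrt n$, hence $F_\la(t)=\Phi_\la(t)-|t|$ there. For $t\ge 0$ put $g(t)=\Phi_\la(t)-t$; then $g'\in\{-2,0\}$, so $g$ is non-increasing on $[0,\infty)$ and tends to $0$, whence $0\le F_\la(t)=g(t)\le 2(\la_1-t)$ for $2\sqrt n\le t\le\la_1$, and $F_\la$ vanishes on $\{t\ge 2\sqrt n\}\setminus[2\sqrt n,\la_1]$ since $\Phi_\la(t)=t$ for $t\ge\la_1$. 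The analogous statement holds on $\{t\le-2\sqrt n\}$ with $\la_1$ replaced by the number of rows $\la_1'$. Since $\dim\la=\dim\la'$, the Plancherel measure is invariant under transposition; as $\Phi_{\la'}(t)=\Phi_\la(-t)$ and $\Omega$ is even, transposition replaces $F_\la$ by the reflection $t\mapsto F_\la(-t)$ and $\la_1'$ by $\la_1$, so the contributions of $\{t\ge 2\sqrt n\}$ and $\{t\le -2\sqrt n\}$ to~(\ref{vybrosterm}) are equidistributed under $\pln$. It therefore suffices to prove that
$$
\frac1{\sqrt n}\int_{2\sqrt n}^{\infty}F_\la(t)\,\arccosh\!\left(\frac t{2\sqrt n}\right)dt\longrightarrow 0
$$
according to the Plancherel measure (the integrand being nonnegative).

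The next step is an elementary estimate. Using $F_\la(t)\le 2(\la_1-t)$ on $[2\sqrt n,\la_1]$ together with the inequality $\arccosh(1+u)\le\sqrt{2u}$ for $u\ge 0$ (valid because $\cosh\sqrt{2u}\ge 1+u$), and writing $w=\la_1-2\sqrt n$ (the integral vanishing unless $w>0$), the substitution $t=2\sqrt n+s$ gives
$$
\int_{2\sqrt n}^{\la_1}F_\la(t)\,\arccosh\!\left(\frac t{2\sqrt n}\right)dt\;\le\;\int_0^{w}2(w-s)\,\frac{\sqrt s}{n^{1/4}}\,ds\;=\;\frac8{15}\,\frac{w^{5/2}}{n^{1/4}},
$$
so that the left-hand side of the displayed limit is at most $\tfrac8{15}\,(\la_1-2\sqrt n)^{5/2}\,n^{-3/4}$.

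It thus remains to show that $\pln\{\la:\ \la_1>2\sqrt n+n^{1/4}\}\to 0$, for then on the complementary event the bound above is at most $\tfrac8{15}\,n^{5/8-3/4}=\tfrac8{15}\,n^{-1/8}\to 0$, and combining with the transpose-symmetry reduction finishes the proof. Here I would invoke the determinantal structure rather than the limit shape: the Logan--Shepp--Vershik--Kerov result only gives $\la_1\le(2+\varepsilon)\sqrt n$, which is far too weak. By the Borodin--Okounkov--Olshanski--Johansson theorem the Poissonized Plancherel measure is the discrete Bessel point process, and for a determinantal process $\Prob(\exists\ \text{particle}>m)\le\sum_{x>m}\J(x,x;\theta^2)$; the super-exponential decay of $\J(x,x;\theta^2)$ for $x$ past $2\theta$ — of the kind established in Section~\ref{secbessel} via the Okounkov contour integral, or alternatively the Baik--Deift--Johansson edge asymptotics — shows that the rightmost particle exceeds $2\theta+\theta^{1/2}$ with probability tending to $0$, and the standard de-Poissonization lemma transfers this to $\pln$ with $\theta=\sqrt n$.

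The main obstacle is precisely this last input. The elementary inequalities above are effortless, but they require an honest upper tail bound for $\la_1$ at the polynomially small scale $o(n^{3/10})$ — comfortably inside the $O(n^{1/6})$ Baik--Deift--Johansson fluctuations, yet invisible to the Vershik--Kerov variational formula, so that the Bessel/determinantal analysis of Section~\ref{secbessel} (or a citation to the known edge asymptotics) is indispensable.
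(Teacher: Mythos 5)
Your argument is correct and follows essentially the same route as the paper: both proofs reduce the lemma to the high-probability bound $\la_1,\la^{\prime}_1\leq 2\sqrt{n}+n^{\delta}$ (your determinantal/Bessel-decay-plus-depoissonization sketch is precisely the content of Proposition \ref{ydelta}, obtained from Proposition \ref{expest}), and then estimate the integral deterministically over the window of width $n^{\delta}$ beyond the edge, arriving at a bound of order $n^{5\delta/2-3/4}$ with the same constraint $\delta\in(1/6,3/10)$ (you fix $\delta=1/4$). Your sharper pointwise bounds $F_{\la}(t)\leq 2(\la_1-t)$ and $\arccosh(1+u)\leq\sqrt{2u}$, and the use of transposition symmetry to treat only one tail, are minor refinements of the paper's cruder bounds $|F_{\la}|\leq 2n^{\delta}$, $\arccosh\leq 2n^{\delta/2-1/4}$, but the mechanism is identical.
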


As before,  let $\la_1$ be the length of the first row of $\la$, and
let $\la^{\prime}_1$ be the length of the first column of $\la$.
Vershik and Kerov \cite{VK} established that
for any $\varepsilon>0$ we have
\begin{equation}
\label{trivboundlen}
\lim\limits_{n\to\infty}\pln\left(  \{\la: \la_1<(2+\varepsilon)\sqrt{n},
\la^{\prime}_1<(2+\varepsilon)\sqrt{n}\}                     \right)=1.
\end{equation}

Theorem \ref{main} is now immediate from the bound (\ref{trivboundlen}), the Vershik-Kerov Variational Formula
and the Lemmas \ref{hooklength}, \ref{lochone}, \ref{tailh}, \ref{lishnijterm}.

We proceed to the proof of the Lemmas.

\subsection{Proof of Lemma \ref{lishnijterm}.}

We shall need a more precise estimate than (\ref{trivboundlen}).
\begin{proposition}
\label{ydelta}
\begin{enumerate}
\item For any $\delta_0>1/6$ there exists  constants $C>0$, ${\tilde \gamma}>0$ such that for all
$\delta$ satisfying $\delta_0\leq \delta\leq 1/2$
we have
\begin{equation}
\lim\limits_{n\to\infty}\pln\left(\{\la: \la_1>2\sqrt{n}+n^{\delta} \ {\rm or}  \
\la^{\prime}_1>2\sqrt{n}+n^{\delta}\}\right)\leq C\exp\left({-{\tilde \gamma}n^{3\delta/2-1/4}}\right).
\end{equation}

\item For any $\varepsilon_1>0$ there exists ${\tilde \gamma}_1>0$ depending only on $\varepsilon_1$ such that for
any $\varepsilon>\varepsilon_1$ we have
 \begin{equation}
\lim\limits_{n\to\infty}\pln\left(\{\la: \la_1>(2+\varepsilon)\sqrt{n} \ {\rm or}  \
\la^{\prime}_1>(2+\varepsilon)\sqrt{n}\}\right)\leq C\exp\left({-{\tilde \gamma}_1\varepsilon n}\right).
\end{equation}

\end{enumerate}
\end{proposition}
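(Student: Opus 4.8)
The plan is to reduce the whole proposition, by de-Poissonization, to a one-point (expected-occupation) estimate for the discrete Bessel determinantal point process furnished by the Borodin--Okounkov--Olshanski--Johansson theorem, and then to insert the classical Debye-type bounds for Bessel functions of large order.

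By the transposition symmetry $\pln(\la)=\pln(\la')$ the two events in each part are equiprobable, so it is enough to bound $\pln\{\la:\la_1>2\sqrt n+s\}$, with $s=n^{\delta}$ for part (1) and $s=\varepsilon\sqrt n$ for part (2), and then double. Next I pass to the poissonized Plancherel measure $\Prob_{\theta^2}$ on $\Y$ (under which the number of cells is $\mathrm{Pois}(\theta^2)$). Write $\Prob^{(m)}$ for the Plancherel measure on $\Y_m$, so $\Prob^{(n)}=\pln$. Under the Robinson--Schensted correspondence $\la_1$ is the length of a longest increasing subsequence of a uniformly random permutation of an $m$-element set; deleting one entry of a uniform permutation of $\{1,\dots,m+1\}$ yields a uniform permutation of an $m$-element set and cannot increase the longest increasing subsequence, so $m\mapsto\Prob^{(m)}\{\la_1>L\}$ is nondecreasing. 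Taking $\theta^2=n$ and discarding the terms $m<n$ in $\Prob_n\{\la_1>L\}=\sum_{m\ge0}e^{-n}\tfrac{n^m}{m!}\Prob^{(m)}\{\la_1>L\}$,
\[
\Prob_n\{\la_1>L\}\ \ge\ \Prob\{\mathrm{Pois}(n)\ge n\}\cdot\Prob^{(n)}\{\la_1>L\}\ \ge\ c\,\pln\{\la_1>L\},
\]
with $c>0$ absolute; so it suffices to estimate $\Prob_n\{\la_1>L\}$.

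By the Borodin--Okounkov--Olshanski--Johansson theorem, under $\Prob_n$ (i.e.\ $\theta=\sqrt n$) the configuration $\{\la_i-i+\tfrac12:i\ge1\}$ is the discrete Bessel determinantal point process with correlation kernel $\J(\cdot,\cdot;n)$. Since $\la_i-i$ is strictly decreasing in $i$, the event $\{\la_1>L\}$ is precisely the event that the configuration has a particle to the right of $L$, and Markov's inequality applied to the number of such particles gives
\[
\Prob_n\{\la_1>L\}\ \le\ \sum_{x>L}\J(x,x;n).
\]
Hence the proposition reduces to a pointwise upper bound on the diagonal of the discrete Bessel kernel in the ``forbidden'' region $x>2\sqrt n$. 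Expressing $\J(x,x;n)$ through the Bessel functions $J_{x\pm1/2}(2\sqrt n)$ --- or, following Okounkov, through the contour-integral representation of the kernel, which makes the steepest-descent step most transparent --- this comes down to bounding $J_\nu(z)$ for $\nu>z=2\sqrt n$. The technical heart is the Debye estimate: writing $\nu=z\cosh\tau$ with $\tau>0$, one has $J_\nu(z)\le C(\nu\tanh\tau)^{-1/2}e^{-\nu(\tau-\tanh\tau)}$, obtained by pushing the integration contour onto the saddle. I expect this to be the main difficulty: it is needed uniformly as $\nu$ ranges over all of $[2\sqrt n,\infty)$ --- from just above the edge, where $\nu-2\sqrt n\asymp n^{1/6}$, out to $\nu\asymp n$ --- with explicit control of the subexponential prefactors, so that neither the summation over $x$ nor the de-Poissonization step destroys the exponent.

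With the Bessel bound in hand the two regimes are a routine computation. For $x$ near $L=2\sqrt n+s$ one has $\cosh\tau=1+\tfrac{s}{2\sqrt n}+\cdots$; when $s=o(\sqrt n)$ this gives $\tau\asymp\sqrt s\,n^{-1/4}$ and $\nu(\tau-\tanh\tau)\asymp\tfrac13 s^{3/2}n^{-1/4}$, the summands $\J(x,x;n)$ decay geometrically in $x$, so the tail sum is controlled by its first term, and, absorbing polynomial-in-$n$ prefactors into the constant in the exponent, $\pln\{\la_1>2\sqrt n+s\}\le Ce^{-c\,s^{3/2}n^{-1/4}}$. Taking $s=n^{\delta}$ with $\delta_0\le\delta\le1/2$ gives the exponent $n^{3\delta/2-1/4}$, a positive power of $n$ exactly when $\delta>1/6$ --- whence the hypothesis $\delta_0>1/6$; this is part (1). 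For a macroscopic excess $s=\varepsilon\sqrt n$ the angle $\tau=\arccosh(1+\tfrac\varepsilon2)$ is bounded away from $0$, so the bound becomes $C\exp\!\big(-(2+\varepsilon)\big(\arccosh(1+\tfrac\varepsilon2)-\tanh\arccosh(1+\tfrac\varepsilon2)\big)\sqrt n\big)$; the rate function here vanishes at $\varepsilon=0$ and is convex in $\varepsilon$, so its ratio to $\varepsilon$ is increasing, and for $\varepsilon>\varepsilon_1$ it is bounded below by $\tilde\gamma_1\varepsilon\sqrt n$ with $\tilde\gamma_1=\tilde\gamma_1(\varepsilon_1)>0$; this is part (2).
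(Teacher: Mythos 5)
Your argument follows the same skeleton as the paper's: poissonize, bound the discrete Bessel kernel beyond the edge by saddle-point (Debye) estimates for Bessel functions of order larger than the argument, and depoissonize. Indeed, the paper's entire proof consists of the remark that the Proposition is ``immediate from Proposition \ref{expest} by depoissonization'', and Proposition \ref{expest} is precisely your uniform bound beyond the turning point: stretched-exponential decay $\exp(-\tilde\gamma(x-2\sqrt n)^{3/2}n^{-1/4})$ near the edge and exponential decay further out, proved by the contour integral. Where you genuinely differ is the depoissonization: the paper invokes the analytic Lemma \ref{depoisson}, which forces complex $\theta$ with $|\theta^2|=n$ and the attendant $e^{\gamma|\theta-\sqrt n|}$ bookkeeping, whereas you exploit the monotonicity of the event $\{\la_1>L\}$ via the RSK/deletion coupling to get $\pln\{\la_1>L\}\le c^{-1}Pois_n\{\la_1>L\}$ directly, so only real $\theta=\sqrt n$ is ever needed. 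For this monotone event that is a legitimate simplification (though it would not replace Lemma \ref{depoisson} in the non-monotone estimates elsewhere in the paper). You also spell out the intensity/Markov step, $Pois_n\{\la_1>L\}\le\sum_{x>L}\J(x,x;n)$ together with $\J(x,x;n)=\sum_{s\ge1}J_{x+s}^2(2\sqrt n)$, which the paper leaves implicit; combined with the classical uniform inequality $J_\nu(\nu\,\mathrm{sech}\,\alpha)\le e^{-\nu(\alpha-\tanh\alpha)}/\sqrt{2\pi\nu\tanh\alpha}$ this closes part (1). One small inaccuracy: near the edge the summands do not decay geometrically at a fixed rate (consecutive exponents differ by about $(x-2\sqrt n)^{1/2}n^{-1/4}$, which may be a small negative power of $n$), but the resulting polynomial factor is absorbed exactly as you say, since the exponent $n^{3\delta/2-1/4}$ is a positive power of $n$ uniformly for $\delta\ge\delta_0>1/6$.

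On part (2): your method yields $C\exp(-\tilde\gamma_1\varepsilon\sqrt n)$, not the $C\exp(-\tilde\gamma_1\varepsilon n)$ printed in the statement. Be aware that the printed exponent cannot hold as an inequality for large $n$ and fixed $\varepsilon$: the upper tail of $\la_1$ under $\pln$ decays only at speed $\sqrt n$, and the paper's own route gives no more --- Proposition \ref{expest} supplies $|J_x(2\theta)|\le C\exp(-\varepsilon_2(x-2\sqrt n)+\gamma|\theta-\sqrt n|)$, which after squaring, summing over $x>(2+\varepsilon)\sqrt n$ and depoissonizing produces exactly a bound of the form $C\exp(-c\,\varepsilon\sqrt n)$; the statement is in any case loosely worded, with a $\lim_{n\to\infty}$ on the left of an $n$-dependent bound. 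Since the paper uses part (2) only to conclude that the probability tends to zero, your $\sqrt n$-speed bound suffices for every application; just state explicitly that this is what you prove rather than the printed exponent.
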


Proposition \ref{ydelta} is well-known.
For completeness of the exposition a proof is given below (see Proposition \ref{expest}).

Now, using Proposition \ref{ydelta}, choose $\delta> \frac{1}{6}$ and assume that
\[
F_\lambda (t) = 0 \quad \text{for} \quad \left| t \right| > 2 \sqrt{n} + n^\delta
\]
In this case for $\left| t \right| \in [2\sqrt{n},\, 2\sqrt{n} + n^\delta]$ we have:
\[
\left| F_\lambda (t) \right| \le 2n^\delta, \qquad \left| \arccosh \left( \frac{t}{2\sqrt{n}} \right) \right| \;\le\; 2 n^{\frac{\delta}{2} - \frac{1}{4}},
\]
whence
\[
\frac{1}{\sqrt{n}} \left| \;
\int\limits_{\left| t\right| \ge 2 \sqrt{n}}
F_\lambda (t) \arccosh \left( \frac{t}{2\sqrt{n}} \right)\, dt \, \right|
\;\le\; 32 n^{\frac{5\delta}{2}-\frac{3}{4}},
\]
and, as soon as $\delta < \frac{3}{10}$, we are done.

We proceed to the analysis of the remaining terms.

\section{Poissonization.}

\subsection{Diagrams and Sequences.}
Let $\Omega_2=\{0,1\}^{{\mathbb
Z}}$ be the space
of bi-infinite sequences of the symbols $0$ , $1$:
$$
\Omega_2=\{\omega=\dots \omega(-n)\dots \omega(n) \dots,
\ \omega(n)\in \{0,1\}\}.
$$

To a sequence $\omega$ we assign a continuous piecewise-linear function $\Phi_{\omega}$
in the following way: we set $\Phi_{\omega}(0)=0$, $\Phi_{\omega}^{\prime}|_{(k,k+1)}=1$
if $\omega_k=0$, $\Phi_{\omega}^{\prime}|_{(k,k+1)}=-1$ if $\omega_k=1$
(it is easy to see
that the continuous function $\Phi_{\omega}$ is uniquely defined by these requirements;
it is differentiable except at integer points).

Take a Young diagram $\la=(\la_1, \la_2, \dots)$ (setting $\la_i=0$ for all large $i$) and
introduce a sequence
$c(\la)\in\Omega_2$  by the rule
$c_k(\la)=1$ if  $k=\la_i-i$
for  some $i$ and  $c_k(\la)=0$ otherwise. It is clear from the definitions that
the difference $\Phi_{\la}-\Phi_{c({\la})}$ is a constant.

Take  an integer vector $\vec{m}=(m_1, \dots, m_r)$ all whose coordinates are distinct and for a
Young diagram $\la$ denote
$$
c_{\vec{m}}(\la)=c_{m_1}(\la)\dots c_{m_r}(\la).
$$
Similarly, for $\omega\in \Omega_2$ write
$$
c_{\vec{m}}(\omega)=\omega_{m_1}\dots \omega_{m_r}.
$$

In what follows, when we speak of integer vectors, we shall always assume
that all their coordinates are distinct.

\subsection{The Bessel Point Process.}
Set
$$
\Y=\bigcup\limits_{n=1}^{\infty} \Y_n,
$$
and for $\eta>0$ let
$$
Pois_{\eta}=\exp(-\eta)\sum\limits_{n=0}^{\infty} \frac{\eta^n}{n!} \pln
$$
be the $\eta$-poissonized Plancherel measure on $\Y$.

At the centre of our argument lies a theorem obtained by Borodin, Okounkov and
Olshanski in \cite{BOO} and Johansson in \cite{johansson} which states that
the measure $Pois_{\eta}$ naturally induces the Bessel
determinantal point process on the space of sequences of
two symbols. We proceed to the exact formulation and start by recalling the definition of a determinantal
point process on $\Omega_2$ (for a more detailed
exposition, see, e.g., \cite{soshnikov}).

Let ${\mathcal K}: {\ell}_2({\mathbb Z})\to {\ell}_2({\mathbb Z})$ be
a self-adjoint positive contraction, or, in other words, a self-adjoint linear operator satisfying
$$
0\leq \langle f, {\mathcal K}f\rangle\leq \langle f,f\rangle.
$$
Set
$$
{\mathcal K}(x,y)=\langle {\mathcal K}\delta_y, \delta_x\rangle.
$$
There exists a unique probability measure $\Prob_{{\mathcal K}}$ on $\Omega_2$ such that
$$
\ee_{\Prob_{{\mathcal K}}}(c_{{\vec m}})=\det \left({\mathcal K}(m_i,
m_j)\right)\big|_{i,j=1, \dots, r}.
$$

Now set $\eta=\theta^2$ (assuming $\theta>0$) and, for $x\neq y$, write
$$
{\mathcal J}({\theta^2}; x,y)=
{\theta}\frac{J_x(2\theta)J_{y+1}(2\theta)-J_{x+1}(2\theta)J_{y}(2\theta)}{x-y}.
$$
The expression ${\mathcal J}({\theta^2}; x,x)$ is defined using the l'Hospital Rule.
The kernel ${\mathcal J}({\theta^2})$ is called {\it the discrete
Bessel kernel}, and the
resulting measure
$\Prob_{{\mathcal J}({\theta^2})}$  on $\Omega_2$ is called {\it the Bessel point process}.

Recall that to a Young diagram $\la$ we have assigned a sequence  $c(\la)\in\Omega_2$.
Slightly abusing notation, we denote the push-forward of the
measure $Pois_{{\eta}}$ on $\Y$ under the map $\la\to c(\la)$ by the same symbol $Pois_{\eta}$.

The theorem of Borodin, Okounkov and Olshanski \cite{BOO} and Johansson \cite{johansson}
states that the measure $Pois_{\eta}$
defined above is precisely the Bessel point process with parameter
$\eta=\theta^2$.

\subsection{Depoissonization.}

Informaton about the Plancherel measure will be derived from the corresponding properties of the Bessel point process
with the use of the following lemma of Borodin, Okounkov and Olshanski
(a slight modification of Lemma 3.1 in \cite{BOO}).

\begin{lemma}[ Borodin, Okounkov, Olshanski]
\label{depoisson}
Let $0<\alpha<1/4$.
Let $\{f_n\}$ be a sequence of entire functions
\begin{equation}
\label{fnz}
f_n(z)=\exp(-z)\sum_{k\geq 0} \frac{f_{nk}}{k!}z^k, \ n=1, \dots
\end{equation}
and assume that there exist constants $f_{\infty}$, $\gamma$, $C_1$, $C_2$ such that
\begin{enumerate}
\item $\max\limits_{|z|=n} |f_n(z)|\leq C_1\exp(\gamma\sqrt{n})$;
\item $\max\limits_{|z-n|<n^{1-\alpha}}|f_n(z)-f_{\infty}|\exp(-\gamma|z-n|/\sqrt{n})\leq C_2.$
\end{enumerate}
Then there exists a constant $C=C(\gamma, C_1, C_2)$ such that
for all $n>0$ we have
$$
|f_{nn}-f_{\infty}|\leq C.
$$
\end{lemma}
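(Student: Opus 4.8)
The plan is to prove Lemma~\ref{depoisson} by a contour-integral (Cauchy-formula) argument, extracting the Taylor coefficient $f_{nn}$ of the ``depoissonized'' quantity from the values of the entire function $f_n$ on a suitable circle, and then splitting that circle into a ``near-diagonal'' arc around $z=n$ and its complement. Writing $g_n(z) = e^z f_n(z) = \sum_{k\ge 0} \frac{f_{nk}}{k!} z^k$, the coefficient $\frac{f_{nn}}{n!}$ is the $n$-th Taylor coefficient of $g_n$, so by Cauchy's formula on the circle $|z|=n$,
\[
\frac{f_{nn}}{n!} = \frac{1}{2\pi i}\oint_{|z|=n} \frac{g_n(z)}{z^{n+1}}\, dz = \frac{1}{2\pi n^n}\int_{-\pi}^{\pi} g_n(n e^{i\phi}) e^{-in\phi}\, d\phi = \frac{1}{2\pi n^n}\int_{-\pi}^{\pi} e^{n e^{i\phi}} f_n(n e^{i\phi}) e^{-in\phi}\, d\phi.
\]
Multiplying through by $n!/n^n \sim \sqrt{2\pi n}\, e^{-n}$ (Stirling), the task becomes to show that
\[
\frac{\sqrt{n}}{\sqrt{2\pi}}\int_{-\pi}^{\pi} e^{n(e^{i\phi}-1-i\phi)} f_n(n e^{i\phi})\, d\phi \longrightarrow f_\infty
\]
up to an $O(1)$ error, which is exactly the Laplace-type statement we need (indeed we only need boundedness, not convergence).

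The main step is the division of the $\phi$-integral into the range $|\phi| \le n^{-1/2-\beta}$ for a small $\beta>0$ (chosen so that $n^{1-\alpha}$ comfortably dominates $n \cdot n^{-1/2-\beta} = n^{1/2-\beta}$, i.e. $\beta$ with $1/2-\beta < 1-\alpha$, which holds for all small $\beta$ since $\alpha<1/4<1/2$) and its complement. On the small arc, $n e^{i\phi}$ lies in the disk $|z-n| < n^{1-\alpha}$, so hypothesis~(2) gives $|f_n(n e^{i\phi}) - f_\infty| \le C_2 \exp(\gamma |n e^{i\phi}-n|/\sqrt{n}) = C_2\exp(\gamma n|e^{i\phi}-1|/\sqrt{n})$; since $|e^{i\phi}-1| \le |\phi|$ and on this arc $\sqrt{n}|\phi| \le n^{-\beta} \to 0$, that exponential is bounded, so $f_n(ne^{i\phi})$ stays within a bounded distance of $f_\infty$. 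Meanwhile $\mathrm{Re}(e^{i\phi}-1-i\phi) = \cos\phi - 1 \le -\phi^2/\pi^2$ on $[-\pi,\pi]$ gives the Gaussian decay $|e^{n(e^{i\phi}-1-i\phi)}| \le e^{-n\phi^2/\pi^2}$, and the standard Gaussian integral $\frac{\sqrt n}{\sqrt{2\pi}}\int_{\mathbb R} e^{-n\phi^2/2}\, d\phi = 1$ (together with the fact that replacing $e^{i\phi}-1-i\phi$ by $-\phi^2/2$ in the exponent costs only a bounded multiplicative factor on the small arc, since the cubic remainder is $O(n|\phi|^3) = O(n^{-3\beta/2 + \text{small}})\to 0$ there) shows this piece contributes $f_\infty + O(1)$.

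For the complementary range $n^{-1/2-\beta} \le |\phi| \le \pi$, I would use hypothesis~(1): $|f_n(ne^{i\phi})| \le \max_{|z|=n}|f_n(z)| \le C_1 e^{\gamma\sqrt n}$. Combined with the bound $|e^{n(e^{i\phi}-1-i\phi)}| \le e^{n(\cos\phi-1)} \le e^{-cn\phi^2} \le e^{-c n \cdot n^{-1-2\beta}} = e^{-c n^{-2\beta}}$... which is \emph{not} small, so a little more care is needed: on $|\phi|\ge n^{-1/2-\beta}$ we instead want $e^{-cn\phi^2}\cdot e^{\gamma\sqrt n}$ integrated, and the worst case is $|\phi|$ near its lower cutoff. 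The clean way is to note $n\phi^2 \ge n^{-2\beta}$ is too weak, so one should instead choose the cutoff at $|\phi| = n^{-\alpha'}$ with $\alpha' < 1/2$ close to $1/2$ but such that $n^{1-2\alpha'}$ still beats $\sqrt n$, i.e. $1 - 2\alpha' > 1/2$ fails — hence the correct cutoff must be dictated by hypothesis~(2)'s radius $n^{1-\alpha}$: take the arc $|\phi| \le n^{-\alpha}$ (since $n\cdot n^{-\alpha} = n^{1-\alpha}$ matches the hypothesis), on which, using $\alpha < 1/4$, one has $\sqrt n |\phi| \le n^{1/2 - \alpha}$ which is \emph{not} bounded — so on this arc one keeps the exponential weight from~(2) and checks $e^{-c n\phi^2 + \gamma\sqrt n |\phi|}$ is integrable with bounded $\frac{\sqrt n}{\sqrt{2\pi}}$-normalized integral (complete the square: $-cn\phi^2 + \gamma\sqrt n|\phi| = -cn(|\phi| - \gamma/(2c\sqrt n))^2 + \gamma^2/(4c)$, a shifted Gaussian contributing $O(1)$), while for $|\phi| \ge n^{-\alpha}$ one uses~(1) and $e^{n(\cos\phi - 1)} \le e^{-cn^{1-2\alpha}}$, and since $1-2\alpha > 1/2$ the product $\sqrt n \cdot e^{\gamma\sqrt n} \cdot e^{-c n^{1-2\alpha}} \to 0$. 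Assembling these three contributions yields $|f_{nn} - f_\infty| \le C(\gamma, C_1, C_2)$. The delicate point — and the part most likely to require genuine care rather than routine estimation — is the bookkeeping of the cutoff exponents so that both hypotheses apply on their respective ranges \emph{and} the Gaussian normalization survives; this is exactly where the restriction $\alpha < 1/4$ is consumed.
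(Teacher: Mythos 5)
Your overall strategy --- Cauchy's formula for the coefficient $f_{nn}/n!$ on the circle $|z|=n$, Stirling's formula, and a split of the circle into an arc around $z=n$ where hypothesis (2) applies and a complementary arc where hypothesis (1) combined with $e^{n(\cos\phi-1)}\le e^{-cn^{1-2\alpha}}$ wins because $1-2\alpha>1/2$ --- is exactly the argument of Lemma 3.1 of Borodin--Okounkov--Olshanski, to which the paper's proof simply defers (with $o(1)$ replaced by $O(1)$). Your treatment of the outer arc and of the difference piece $\frac{\sqrt n}{\sqrt{2\pi}}\int_{|\phi|\le n^{-\alpha}} e^{n(\cos\phi-1)}\,|f_n(ne^{i\phi})-f_\infty|\,d\phi$ (completing the square against the Gaussian to absorb the weight $e^{\gamma\sqrt n|\phi|}$) is correct, and the restriction $\alpha<1/4$ is indeed consumed exactly where you say it is.

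The loose end is the main term $f_\infty\cdot M_n$, where $M_n=\frac{\sqrt n}{\sqrt{2\pi}}\int_{|\phi|\le n^{-\alpha}} e^{n(e^{i\phi}-1-i\phi)}\,d\phi$. The justification you give for ``this piece contributes $f_\infty+O(1)$'' belongs to the cutoff $|\phi|\le n^{-1/2-\beta}$ that you later abandon, and it is wrong there in any case: on that narrow arc $\frac{\sqrt n}{\sqrt{2\pi}}\int e^{-n\phi^2/2}\,d\phi\to 0$, i.e.\ the arc carries a vanishing fraction of the Gaussian mass, so it cannot produce the term $f_\infty$ (which is precisely why the complementary range could not be controlled and you had to move the cutoff). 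After the cutoff is moved to $|\phi|\le n^{-\alpha}$ you never redo the main-term estimate, and the reasoning you quoted no longer applies: on this arc $n|\phi|^3$ can be as large as $n^{1-3\alpha}\to\infty$ (since $\alpha<1/4<1/3$), so replacing $e^{i\phi}-1-i\phi$ by $-\phi^2/2$ is not a bounded pointwise change. The repair is routine but must be said: either (a) split the central arc once more at $|\phi|=n^{-1/2}\log n$, outside of which the integrand is superpolynomially small and inside of which the cubic remainder is $O(n^{-1/2}\log^3 n)$, giving $M_n=1+o(1)$; or (b) note that for the claimed $O(1)$ conclusion it suffices that $|M_n|\le C$ (immediate from $|e^{n(e^{i\phi}-1-i\phi)}|\le e^{-cn\phi^2}$) together with the observation that $|f_\infty|\le C_2+C_1e^{\gamma}$, obtained by applying hypotheses (1) and (2) at $n=1$, $z=1$ --- some such remark is needed anyway, since the lemma's constant is allowed to depend only on $\gamma, C_1, C_2$ and your error terms include $f_\infty(M_n-1)$ and the Stirling correction $f_\infty\cdot O(1/n)$. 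With either repair the proof closes and coincides with the paper's (i.e.\ BOO's) argument.
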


The proof is identical to the proof of Lemma 3.1 in \cite{BOO} except that
$o(1)$ in the last two formulas on page 495 must be replaced by $O(1)$.

To use Lemma \ref{depoisson} we must allow complex values of the poissonization
parameter $\theta^2$: in this case, expressions
such as  $\ee_{{\J}(\theta^2)}$ are understood formally (by analytic
continuation).

Lemma \ref{depoisson} can be equivalently reformulated as follows.
\begin{lemma} \label{depoismod}
Let $\delta>0$ be arbitrary, let $\alpha$ satisfy $0<\alpha< \frac{1}{4}$. Assume that there exist constants
$
f_\infty, \; \gamma_1,\, \gamma_2,\, \gamma_3, \, C_1,\, C_2,\, C_3 \; >0
$
such that
\begin{align*}
\text{1)} \qquad &\max_{\left| z - n \right| < n^\delta} \left| f_n(z) - f_\infty \right| e^{-\frac{ \gamma_1 \left| z - n \right|}{\sqrt{n}} } \;\le\; C_1,\\
\text{2)} \qquad &\max_{\left| z - n \right| < n^{1-\alpha}} \left| f_n(z) \right| e^{-\frac{ \gamma_2 \left| z - n \right|}{\sqrt{n}} } \;\le\; C_2, \\
\text{3)} \qquad &\max_{\left| z \right| = n} \left| f_n(z) \right| \;\le\; C_3 e^{\gamma_3 \sqrt{n}}.
\end{align*}
Then there exists a constant $C = C(\gamma_1, \, \gamma_2, \, \gamma_3,\, C_1, \, C_2, \, C_3)$ such that for all $n>0$ we have
\[
\left| f_{nn} - f_\infty \right| < C.
\]
\end{lemma}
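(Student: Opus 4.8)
The plan is to deduce Lemma~\ref{depoismod} from Lemma~\ref{depoisson}; since the converse implication is immediate, this also justifies calling the two statements equivalent, and only this one direction is used in the sequel. The point is that the three exponents $\gamma_1,\gamma_2,\gamma_3$ and the two radii $n^{\delta},n^{1-\alpha}$ of Lemma~\ref{depoismod} can be collapsed into the single exponent and single radius of hypothesis~(2) of Lemma~\ref{depoisson} by merging constants; the key observation is that hypothesis 2) of Lemma~\ref{depoismod} bounds $|f_n(z)|$, hence also $|f_n(z)-f_\infty|$, on the \emph{entire} disc $\{|z-n|<n^{1-\alpha}\}$, so it bridges the range $n^{\delta}\le|z-n|<n^{1-\alpha}$ not covered by hypothesis 1).

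I would begin with two harmless normalizations. Since the bound in hypothesis 1) over $\{|z-n|<n^{\delta}\}$ only weakens as $\delta$ decreases, we may replace $\delta$ by $\min(\delta,1-\alpha)$ and so assume $\delta\le 1-\alpha$. Set $\gamma:=\max(\gamma_1,\gamma_2,\gamma_3)$; enlarging the exponents preserves 1)--3) because $|z-n|/\sqrt n\ge0$ and $\sqrt n\ge0$. Evaluating 1) and 2) at $z=n$ gives $f_\infty\le|f_n(n)-f_\infty|+|f_n(n)|\le C_1+C_2$, so that every constant produced below depends only on $\gamma_1,\gamma_2,\gamma_3,C_1,C_2,C_3$. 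Now I would check the hypotheses of Lemma~\ref{depoisson}. Its hypothesis~(1) is our hypothesis 3), read with the constant $C_3$ and exponent $\gamma$. For its hypothesis~(2), fix $n$ and split $\{|z-n|<n^{1-\alpha}\}$ into the inner disc $\{|z-n|<n^{\delta}\}$ and the annulus $\{n^{\delta}\le|z-n|<n^{1-\alpha}\}$. On the inner disc, hypothesis 1) gives $|f_n(z)-f_\infty|\,e^{-\gamma|z-n|/\sqrt n}\le C_1$. On the annulus, hypothesis 2) gives $|f_n(z)|\le C_2\,e^{\gamma|z-n|/\sqrt n}$, whence, since $f_\infty>0$ and $e^{\gamma|z-n|/\sqrt n}\ge1$, $|f_n(z)-f_\infty|\le(C_2+f_\infty)\,e^{\gamma|z-n|/\sqrt n}\le(C_1+2C_2)\,e^{\gamma|z-n|/\sqrt n}$. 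Thus hypothesis~(2) of Lemma~\ref{depoisson} holds with exponent $\gamma$ and constant $C_1+2C_2$. Since $0<\alpha<1/4$, Lemma~\ref{depoisson} furnishes a constant $C=C(\gamma,C_3,C_1+2C_2)$ with $|f_{nn}-f_\infty|\le C$ for all $n>0$, which is the assertion. For the converse, hypothesis~(2) of Lemma~\ref{depoisson} restricted to the smaller disc gives 1), it yields 2) via $|f_n(z)|\le|f_n(z)-f_\infty|+f_\infty\le(C_2+f_\infty)e^{\gamma|z-n|/\sqrt n}$, and its hypothesis~(1) is 3).

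I do not anticipate any genuine obstacle: the argument is bookkeeping of constants, with no new analytic input beyond the contour-integral estimate already contained in Lemma~\ref{depoisson}. This is precisely why hypothesis 1) is permitted to hold only on a disc $\{|z-n|<n^{\delta}\}$ of arbitrarily small radius — the missing intermediate range is absorbed by the growth bound 2), whose sole function is to interpolate between the sharp information near $z=n$ supplied by 1) and the crude information on $|z|=n$ supplied by 3).
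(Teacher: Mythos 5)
Your deduction is correct and is exactly what the paper intends: the paper offers no separate proof of Lemma~\ref{depoismod}, simply asserting it is an equivalent reformulation of Lemma~\ref{depoisson}, and your argument (merging $\gamma_1,\gamma_2,\gamma_3$ into one exponent, bounding $f_\infty\le C_1+C_2$ at $z=n$, and using hypothesis 2) to bridge the annulus $n^{\delta}\le|z-n|<n^{1-\alpha}$ so that condition (2) of Lemma~\ref{depoisson} holds with constant $C_1+2C_2$) supplies precisely the routine bookkeeping that the paper leaves implicit.
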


\begin{corollary} \label{depoisan}
Assume that the sequence $f_n$ of entire functions defined by (\ref{fnz}) satisfies conditions 2) and 3) of Lemma \ref{depoismod}. Let $\widetilde C_1 >0$, and let $a_n$ be a sequence of positive numbers satisfying $\left| a_n \right| \le \widetilde C_1$. If
\[
\max_{\left| z - n \right| < n^\delta} \left| f_n(z) - f_\infty \right| e^{-\frac{ \gamma_1 \left| z - n \right|}{\sqrt{n}} } \;\le\; C_1 a_n,
\]
then for all $n>0$ we have:
\[
\left| f_{nn} - f_\infty \right| \;\le\; C a_n.
\]
Here $C$, again, is a constant depending only on $\gamma_1,\, \gamma_2,\, \gamma_3, \, C_1,\,\widetilde C_1,\, C_2,\, C_3$.
\end{corollary}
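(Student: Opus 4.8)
The plan is to revisit the proof of Lemma~\ref{depoismod} (equivalently, of Lemma~\ref{depoisson}) and to use that the depoissonization estimate it produces is \emph{linear} in the quantity bounded in condition~1). I would start from the Cauchy representation, immediate from (\ref{fnz}),
\[
f_{nn}-f_\infty=\frac{n!}{2\pi i}\oint_{|z|=n}\frac{\bigl(f_n(z)-f_\infty\bigr)\,e^z}{z^{n+1}}\,dz ,
\]
valid because $\frac{n!}{2\pi i}\oint_{|z|=n}\frac{e^z}{z^{n+1}}\,dz=1$ (the coefficient of $z^{n}$ in $e^{z}$ being $1/n!$). Before splitting the contour I would record that $|f_\infty|\le C_2+C_1\widetilde C_1$, obtained by evaluating condition~2) and the hypothesis at $z=n$; this is the one place the bound $\widetilde C_1$ on $a_n$ is used, and it lets one estimate $|f_n(z)-f_\infty|$ by $C_2'e^{\gamma_2|z-n|/\sqrt n}$ with $C_2':=C_2+|f_\infty|$ on the region of condition~2), and by $C_3'e^{\gamma_3\sqrt n}$ with $C_3':=C_3+|f_\infty|$ on the region of condition~3).

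Parametrizing $z=ne^{i\varphi}$ and using that, by Stirling and the inequality $1-\cos\varphi\ge 2\varphi^2/\pi^2$, the modulus of the kernel $\frac{n!}{2\pi n^{n}}e^{ne^{i\varphi}}e^{-in\varphi}$ is at most $K_1\sqrt n\,e^{-K_2 n\varphi^2}$ for absolute $K_1,K_2>0$, together with $|z-n|\le n|\varphi|$, I would split the integral at $|z-n|=n^{\delta}$. On the inner arc the hypothesis gives $|f_n(z)-f_\infty|\le C_1 a_n e^{\gamma_1|z-n|/\sqrt n}\le C_1 a_n e^{\gamma_1\sqrt n\,|\varphi|}$, so after the substitution $\psi=\sqrt n\,\varphi$ this contribution is at most $C_1 a_n\int_{\mathbb R}K_1 e^{-K_2\psi^2+\gamma_1|\psi|}\,d\psi=K_3 C_1 a_n$ with $K_3=K_3(K_1,K_2,\gamma_1)<\infty$. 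The complementary arc $|z-n|\ge n^{\delta}$ is estimated exactly as in the proof of Lemma~\ref{depoismod}, now feeding in the bounds $C_2'e^{\gamma_2|z-n|/\sqrt n}$ and $C_3'e^{\gamma_3\sqrt n}$ for $|f_n(z)-f_\infty|$: there the kernel is super-exponentially small and beats these factors, producing a remainder $r_n\le K_4 e^{-cn^{\beta}}$ with $K_4,c,\beta>0$ depending only on the admissible constants. Altogether $|f_{nn}-f_\infty|\le K_3 C_1 a_n+r_n$, and since $r_n$ decays faster than any power of $n$ while $a_n$ is, in the applications made below, at most polynomially small, $r_n\le C' a_n$ for all large $n$; absorbing the finitely many remaining indices into the constant gives $|f_{nn}-f_\infty|\le C a_n$ with $C=C(\gamma_1,\gamma_2,\gamma_3,C_1,\widetilde C_1,C_2,C_3)$.

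The main obstacle is exactly the complementary arc: the remainder $r_n$ produced by conditions~2) and~3) carries no factor of $a_n$, so for the clean inequality $|f_{nn}-f_\infty|\le Ca_n$ one needs to know that $a_n$ is not super-exponentially small — harmless in the applications, though in full generality the argument only delivers $|f_{nn}-f_\infty|\le C\bigl(a_n+e^{-cn^{\beta}}\bigr)$. One should also keep in mind that the splitting radius $n^{\delta}$ must be large enough — $\delta>1/2$ suffices — for the strong bound to reach past the scale $|z-n|\sim\sqrt n$ where the mass of the Cauchy kernel sits; for smaller $\delta$ the same scheme, combined with the Hadamard three-circles theorem applied between $|z-n|=n^{\delta}$ and $|z-n|=n^{1-\alpha}$, yields only the weaker estimate $|f_{nn}-f_\infty|\le C a_n^{\theta}$ with $\theta=\theta(\alpha,\delta)\in(0,1)$, which is still enough for the way the corollary is used.
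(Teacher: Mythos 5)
Your argument is correct in substance, but it takes a genuinely different route from the paper's. The paper proves Corollary \ref{depoisan} in one line: apply Lemma \ref{depoismod} to the rescaled sequence $\bigl(f_n(z)-f_\infty\bigr)/a_n$, whose condition 1) then holds with the constant $C_1$ and whose limit value is $0$. You instead reprove the depoissonization estimate directly, from the Cauchy coefficient formula and the Laplace-type bound $K_1\sqrt{n}\,e^{-K_2n\varphi^2}$ for the kernel $\frac{n!\,e^{z}}{2\pi z^{n+1}}$ on $|z|=n$, so as to make explicit that the contribution of the region of condition 1) is \emph{linear} in the bound $C_1a_n$, while the regions governed by conditions 2) and 3) contribute only a stretched-exponentially small remainder $r_n$; your use of the hypothesis at $z=n$ to bound $|f_\infty|$ by $C_2+C_1\widetilde C_1$ is exactly where $\widetilde C_1$ must enter. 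What your rederivation buys is precisely the honesty about the two caveats you flag, and both are genuine: the remainder carries no factor of $a_n$, and the clean inequality requires the closeness region to reach past the scale $|z-n|\sim\sqrt{n}$, i.e. $\delta>1/2$. Indeed, for $\delta<1/2$ the literal statement can fail: taking $f_n(z)=\bigl((z-n)/n^{1-\alpha}\bigr)^2$ and $f_\infty=0$, conditions 2) and 3) hold with absolute constants, the hypothesis holds with $a_n= n^{2(\delta-1+\alpha)}$, yet $f_{nn}=-n^{2\alpha-1}$, so $|f_{nn}|/a_n=n^{1-2\delta}\to\infty$; your three-circles fallback $Ca_n^{\theta}$ is the right kind of substitute in that regime. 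Note, moreover, that the paper's one-line reduction silently relies on the same structure: dividing by $a_n$ inflates the constants of conditions 2) and 3) to $\sim C_2/a_n$, $C_3/a_n$, so the reduction is legitimate only because, in the proof of Lemma \ref{depoismod} (i.e. of Lemma 3.1 in \cite{BOO}), the dependence on the condition-1 constant is linear while the dependence on $C_2,C_3$ enters through a super-exponentially small term. In the situations where the corollary is actually invoked, the closeness estimates hold on a macroscopic arc $|\theta/\sqrt{n}-1|<\varepsilon_0$ (effectively $\delta$ close to $1$) and $a_n$ decays at most polynomially, so both your argument and the paper's deliver the bound that is needed.
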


\begin{proof}
Follows by applying Lemma \ref{depoismod} to the sequence
\[
\frac{f_n(z)-f_\infty}{a_n}.
\]
\end{proof}

We shall be mainly concerned with depoissonization of various polynomials of Bessel functions, and it is useful to note that in this case Conditions 2 and 3 in Lemma \ref{depoismod} and Corollary \ref{depoisan} hold automatically. More precisely, we have the following proposition.

\begin{proposition}
Let $K>0$, $k\in {\mathbb N}$. Let $P$ be a polynomial in $k$ variables. Let
\[
x_n^{(1)}, \, \ldots, \, x_n^{(k)}, \ n\in {\mathbb N},
\]
be $k$ sequences of integers satisfying
\[
\left| x_n^{(1)} \right|, \, \ldots, \, \left| x_n^{(k)} \right| \le K\sqrt{n}.
\]
Then there exist constants
\[
C_1, \, C_2,\, \gamma_1, \, \gamma_2 \; > \; 0
\]
depending only on $k,\, K$ and $P$ such that
\begin{align*}
&\text{1)} \quad
\max_{\left| z \right| = n}
\left|
P \left(\, J_{x_n^{(1)}} (2\sqrt{z}\,), \, \ldots, \, J_{x_n^{(k)}} (2\sqrt{z}\,)
\right) \right| \;\le\; C_1 e^{\gamma_1 \sqrt{n}}.
\\
&\text{2)} \quad
\max_{\left| z-n \right|< n^{1-\alpha}}
\left|
P \left(\, J_{x_n^{(1)}} (2\sqrt{z}\,), \, \ldots, \, J_{x_n^{(k)}} (2\sqrt{z}\,)
\right) \right| e^{-\frac{\gamma_2 \left| z - n\right|}{\sqrt{n}}} \;\le\; C_2.
\end{align*}
\end{proposition}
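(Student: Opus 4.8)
The whole proposition will follow from a single soft \emph{a priori} bound on Bessel functions of integer order, namely
\[
\left| J_m(2\sqrt z) \right| \;\le\; \exp\bigl( 2 \left| \operatorname{Im} \sqrt z \right| \bigr) \qquad \text{for every } m \in \mathbb Z \text{ and every } z .
\]
To prove it I would start from the classical generating function $\exp\bigl(\sqrt z\,(t - 1/t)\bigr) = \sum_{m \in \mathbb Z} J_m(2\sqrt z)\, t^m$, which gives the contour representation $J_m(2\sqrt z) = \frac{1}{2\pi i}\oint_{|t|=1} t^{-m-1}\exp\bigl(\sqrt z\,(t - 1/t)\bigr)\, dt$. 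On the unit circle $|t^{-m-1}| = 1$ and $t - 1/t = 2 i \operatorname{Im} t$, so $\operatorname{Re}\bigl[\sqrt z\,(t - 1/t)\bigr] = -2\operatorname{Im}(\sqrt z)\operatorname{Im}(t) \le 2\left|\operatorname{Im}\sqrt z\right|$; bounding the integrand by its modulus then yields the displayed inequality, uniformly in the order $m$. In particular the hypothesis $\left|x_n^{(j)}\right| \le K\sqrt n$ is not actually used in the argument; I would keep $K$ in the statement only for uniformity with the way the proposition is invoked later. Since $\left|J_m(2\sqrt z)\right|$ and $\left|\operatorname{Im}\sqrt z\right|$ are single-valued, the branch of $\sqrt z$ plays no role, and I may fix the principal branch wherever convenient.

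Next I would pass from a single Bessel function to the polynomial $P$ by multiplicativity. Put $d = \deg P$ and let $A$ be the sum of the absolute values of the coefficients of $P$. If $M \ge 1$ is any quantity bounding all the factors $\left|J_{x_n^{(j)}}(2\sqrt z)\right|$ at a given $z$, then every monomial of $P$ has total degree at most $d$ and hence is at most $M^d$, so $\bigl|P\bigl(J_{x_n^{(1)}}(2\sqrt z), \dots, J_{x_n^{(k)}}(2\sqrt z)\bigr)\bigr| \le A\, M^d$. For part 1), on the circle $\{|z| = n\}$ one has $\left|\operatorname{Im}\sqrt z\right| \le \left|\sqrt z\right| = \sqrt n$, so $M = e^{2\sqrt n}$ works and $|P(\cdots)| \le A\, e^{2 d \sqrt n}$: this is 1) with $C_1 = A$, $\gamma_1 = 2d$. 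For part 2), when $0 < \alpha < \tfrac14$ the disk $\{|z - n| < n^{1-\alpha}\}$ is contained, for all large $n$, in $\{\operatorname{Re} z \ge n/2\}$, where $\operatorname{Re}\sqrt z \ge \sqrt{\operatorname{Re} z} \ge \tfrac12\sqrt n$, hence $\left|\sqrt z + \sqrt n\right| \ge \sqrt n$ and
\[
\left|\operatorname{Im}\sqrt z\right| \;=\; \left|\operatorname{Im}(\sqrt z - \sqrt n)\right| \;\le\; \left|\sqrt z - \sqrt n\right| \;=\; \frac{\left|z - n\right|}{\left|\sqrt z + \sqrt n\right|} \;\le\; \frac{\left|z - n\right|}{\sqrt n};
\]
so $M = \exp\bigl(2\left|z-n\right|/\sqrt n\bigr)$ works and $|P(\cdots)| \le A\exp\bigl(2 d\left|z-n\right|/\sqrt n\bigr)$, which is 2) with $C_2 = A$, $\gamma_2 = 2d$. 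The finitely many small $n$ for which the disk still meets the slit $(-\infty,0]$ are absorbed into $C_2$. All constants depend only on $P$ (through $d$ and $A$) and on $k$, as required.

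I do not anticipate a genuine obstacle: this is a routine consequence of the integral representation, and is exactly the automatic verification of Conditions 2) and 3) of Lemma \ref{depoismod} that was promised just before the statement. The two points that deserve a word of care are the uniformity of the Bessel bound in the order $m$ — which is what lets one dispense with any control involving $K$, and which comes for free from integrating over $\{|t|=1\}$ — and the multivaluedness of $z \mapsto \sqrt z$, which is harmless since only $\left|J_m(2\sqrt z)\right|$ and $\left|\operatorname{Im}\sqrt z\right|$ enter the estimates. If one preferred to avoid branches entirely, one could work throughout with the entire function $\theta \mapsto J_m(2\theta)$ in the variable $\theta = \sqrt z$ and observe that the sets $\{|z| = n\}$ and $\{|z - n| < n^{1-\alpha}\}$ pull back to regions on which $\left|\operatorname{Im}\theta\right|$ is controlled exactly as above.
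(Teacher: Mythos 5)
Your proof is correct and follows essentially the same route as the paper, which disposes of this Proposition with the one-line remark that it is immediate from the contour integral representation of the Bessel functions: your unit-circle generating-function contour, the resulting uniform bound $\left|J_m(2\sqrt z)\right|\le e^{2\left|\operatorname{Im}\sqrt z\right|}$, and the elementary estimate $\left|\operatorname{Im}\sqrt z\right|\le \left|z-n\right|/\sqrt n$ on the disk are exactly the intended "immediate" argument. Your side observation that the bound is uniform in the order (so the hypothesis $\left|x_n^{(j)}\right|\le K\sqrt n$ is not actually needed) is accurate and harmless.
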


\noindent
The Proposition is immediate from the contour integral representation of the Bessel functions.

We also note that in the depoissonization arguments that  follow, weaker assumptions on $\theta$ than those of \cite{BOO}
are sufficient: namely, we shall always assume that
$\theta=\sqrt{z}$ satisfies
$$
\left|\frac{\theta}{\sqrt{n}}-1\right|\leq \varepsilon_0,
$$
where $\varepsilon_0$ is sufficiently small.

\subsection{The Debye Asymptotics.}


For depoissonization we need the asymptotics of
Bessel functions when both order and argument are large. First results
of this type are due to Carlini;
we shall use the asymptotics due to Debye, following the exposition
by Watson \cite{watson}.

Take $\varepsilon>0$. Set
\begin{equation}
N_n^{(\varepsilon)}=\{k\in {\mathbb Z}: \frac{|k|}{\sqrt{n}}<2-\varepsilon\}.
\end{equation}

Take $x\in  N_n^{(\varepsilon)}$.  Then there exists $\varepsilon_0>0$ depending only on $\varepsilon$
such that for any $\theta\in {\mathbb C}$ satisfying $|\theta/\sqrt{n}-1|<\varepsilon_0$ the following is
true. Introduce $u$ by the formula $\cos u=x/2\theta$, $0<\Re(u)<\pi$.
Then we have the following representation for Bessel functions,
asymptotic in the sense of Poincar{\'e}:
\begin{equation}
\label{debyewatson}
J_x(2\theta)=\frac{\cos\big(2\theta(\tan u -u)-\frac{\pi}{4}\big)}{\sqrt{\pi\theta\tan u}}
\big(1+\sum_{m=1}^{\infty} \frac{\alpha_m(u)}{\theta^m} \big),
\end{equation}
where for any $\varepsilon^{\prime}>0$ there exists $\delta^{\prime}>0$ such
that all $\alpha_m(u)$ are holomorphic in $u$ in the
strip $[\varepsilon^{\prime}, \pi-\varepsilon^{\prime}]\times [-\delta^{\prime}, \delta^{\prime}]$.

\section{Local Patterns in Plancherel Young Diagrams.}
\subsection{The Discrete Sine-Process.}
Take $a\in (-2,2)$ and
introduce {\it the discrete  sine-kernel} by the formula
$$
{\mathcal S}(k,a)=\begin{cases}
\frac{\sin(\arccos(a/2)k)}{\pi k}, &\text{if $k\neq 0$;}\\
                 \frac{\arccos(a/2)}{\pi},&\text{if $k=0$.}
\end{cases}
$$

Introduce a measure $\SIN(a)$ on $\Omega_2$ by setting
\begin{equation}
\ee_{\SIN(a)}(c_{\vec{m}})=\det \left({\mathcal S}(m_i-
m_j,a)\right)\big|_{i,j=1, \dots, r}.
\end{equation}

The measure $\SIN(a)$ is called {\it the discrete sine-process}.

For $x\in {\mathbb Z}$, ${\vec m}\in {\mathbb Z}^r$, denote $x+{\vec m}=(x+m_1, \dots, x+m_r)$.

The theorem of Borodin, Okounkov and Olshanski \cite{BOO}
says that for any $a\in (-2,2)$, any integer vector ${\vec m}$ and any sequence
$x_n\in {\mathbb Z}$ satisfying
$$
\lim\limits_{n\to\infty} \frac{x_n}{\sqrt{n}}=a,
$$
we have
\begin{equation}
\lim\limits_{n\to\infty} \ee_{\pln}\left( c_{x_n+\vec{m}}(\la)\right)=\ee_{\SIN(a)}\left(c_{{\vec m}}\right).
\end{equation}

\subsection{The Variance of the Discrete Sine-Process.}

We shall need the following simple estimate.

\begin{proposition}
\label{varsinproc}
There exists a positive constant $C$ such that for any $s\in[0,1]$, any $h>1$ and any $a\in(-2,2)$ we have
$$\mathbb{E}_{{\mathcal S}(a)}\left(\Phi_{\omega}(s+h)-\Phi_{\omega}(s)-\frac{2}{\pi} \arcsin\left(\frac{a}{2}\right)h\right)^2\leq C\left(1+\log h\right)\,.$$
\end{proposition}

\begin{proof}
First, recall that for any $a\in(-2,2)$ the operator $\mathcal{S}_a :l_2(\mathbb{Z})\rightarrow l_2(\mathbb{Z})$ given by the formula
$$\mathcal{S}_a f(x)=\sum\limits_{k\in\mathbb{Z}} \mathcal{S}(k,a) f(x+k)\,,$$
is an orthogonal projection (this is easy to check by taking the Fourier transform).

Second, recall the well-known
\begin{proposition}
Let $\mathcal{K}:l_2(\mathbb{Z})\rightarrow l_2(\mathbb{Z})$
$$\mathcal{K} f(x)=\sum\limits_{y\in {\mathbb Z}}\mathcal{K}(x,y)f(y)\,,$$
be an orthogonal projection, and let $\mathbb{P}_{\mathcal{K}}$ be the corresponding determinantal measure on $\Omega_2$. Then for any $k_1,k_2\in\mathbb{Z},\;k_1<k_2,\,$ we have
$${\rm Var}_{{\mathbb P}_{\mathcal{K}}} \left(\sum\limits_{n=k_1}^{k_2}c_n\right)=\sum\limits_{x\in[k_1,k_2]} \sum\limits_{y\notin[k_1,k_2]}\left|\mathcal{K}(x,y)\right|^2$$
\end{proposition}
The proof is a straightforward computation using the formula
$$\mathcal{K} (x,x)=\sum\limits_{y\in\mathbb{Z}}\left|\mathcal{K}(x,y)\right|^2\,,$$
which holds for any $x\in\mathbb{Z}$.

In the remainder of the proof, $C$ stands for a positive constant that does not depend on $a\in(-2,2)$.

 By definition, for any $k\neq0$ and all $a\in(-2,2)$ we have $\left|\mathcal{S}(k,a)\right|\leq\frac{1}{|k|}$. Therefore, for any $N>0$, we have
$${\rm Var}_{\mathcal{S}(a)} \left(\sum\limits_{n=0}^{N}c_n\right)\leq C(1+\log N).$$
Since $\Phi_{\omega}(0)=0$ and $$\Phi_{\omega}(n+1)-\Phi_{\omega}(n)=1-2c_n(\omega)\,,$$
for any $N>0$ we have $$\Phi_{\omega}(N)=N-2\sum\limits_{n=0}^{N-1}c_n(\omega)\,.$$

It follows that
$$\mathbb{E}_{{\mathcal S}(a)}\Phi_{\omega}(N)=\frac{2}{\pi}N\arcsin \left(\frac{a}{2}\right)\,,$$
$${\rm Var}_{\mathcal{S}(a)}\Phi_{\omega}(N)\leq C(1+\log N).$$
In other words,
$$\mathbb{E}_{{\mathcal S}(a)}\left(\Phi_{\omega}(N)-\frac{2}{\pi}N\arcsin \left(\frac{a}{2}\right)\right)^2\leq C(1+\log N).$$

 Recall that the function $\Phi_{\omega}$ is Lipschitz with constant 1.  Using the inequality $(c+d)^2\leq 2(c^2+d^2)$, for any $s\in(0,1)$ and any $h>1$ we finally obtain
$$\mathbb{E}_{\SIN(a)}\left(\Phi_{\omega}(s+h)-\Phi_{\omega}(s)-\frac{2}{\pi} \arcsin\left(\frac{a}{2}\right)h\right)^2\leq C\left(1+\log h\right),$$
which is what we had to prove.

\end{proof}

\subsection{Decay of Correlations for the Plancherel Measure.}

Borodin, Okounkov and Olshanski \cite{BOO} have also shown that if $a\neq b$ and
$$
\lim\limits_{n\to\infty} \frac{x_n}{\sqrt{n}}=a,\
\lim\limits_{n\to\infty} \frac{y_n}{\sqrt{n}}=b,
$$
then for any integer vectors $\vec{l}, \vec{m}$ we have
\begin{equation}
\lim\limits_{n\to\infty} \ee_{\pln}
\left(c_{x_n+\vec{m}}(\la)\cdot c_{y_n+\vec{l}}(\la)\right)=
\ee_{\SIN(a)}c_{\vec{m}}\cdot \ee_{\SIN(b)}c_{\vec{l}}.
\end{equation}
Distant local patterns in a Young diagram are thus asymptotically independent.
We shall need an estimate for the decay of correlations of the Plancherel
measure.


For an integer vector $\vec {m}$, let $|\vec{m}|$ stand for the maximum of absolute
values of its coordinates.
\begin{lemma}
\label{correlplan}
For any $\varepsilon>0$, $L>0$, there exists
a constant $C=C(\varepsilon, L)$ such that for any $n>0$, any
$x,y\in {\mathbb Z}$ such that
$$
\frac{|x|}{\sqrt{n}}, \frac{|y|}{\sqrt{n}}<2-\varepsilon
$$
and  any integer vectors ${\vec l}$, ${\vec m}$ satisfying
$|{\vec l}|\leq L$, $|{\vec m}|\leq L$,
we have
\begin{multline}
\label{xyplancherel}
\left|
\ee_{\pln}(c_{x+\vec{l}}\cdot c_{y+\vec{m}})-\ee_{\pln}(c_{x+\vec{l}})\cdot
\ee_{\pln}(c_{y+\vec{m}})
\right|
\leq \\
C(\varepsilon, L)\left(\left(\frac{1}{|x-y|+1}\right)^2+\frac{1}{\sqrt{n}}\right).
\end{multline}
\end{lemma}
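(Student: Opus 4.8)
The plan is to pass to the poissonized measure, where everything becomes determinantal, prove the bound there, and depoissonize. We may assume $n$ is large (there are only finitely many configurations otherwise, and the bound for them follows by taking the constant large) and $|x-y|>4L$ (for $|x-y|\le 4L$ the bound is trivial, since $(|x-y|+1)^{-1}\ge(4L+1)^{-1}$ while the left side of (\ref{xyplancherel}) never exceeds $1$). Put $A=x+\vec l$, $B=y+\vec m$; these are disjoint, and $|u-v|\ge\tfrac12|x-y|$ whenever $u\in A$, $v\in B$. For a finite set $S\subset\mathbb Z$ write $\mathcal J(z)[S]$ for $\det\bigl(\mathcal J(z;u,v)\bigr)_{u,v\in S}$, where $\mathcal J(z;u,v)=\sqrt z\bigl(J_u(2\sqrt z)J_{v+1}(2\sqrt z)-J_{u+1}(2\sqrt z)J_v(2\sqrt z)\bigr)(u-v)^{-1}$. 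By the Borodin--Okounkov--Olshanski and Johansson theorem $\ee_{Pois_z}(c_S)=\mathcal J(z)[S]$; this is an entire function of $z$ of the form $e^{-z}\sum_{k\ge0}\ee_{{\mathbb Pl}^{(k)}}(c_S)z^k/k!$, so $\ee_{\pln}(c_S)$ is its $n$-th Poisson coefficient in the sense of (\ref{fnz}), while its value at the real point $z=n$ is the Bessel-process quantity $\ee_{\mathcal J(n)}(c_S)$.

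I first bound the covariance for the Bessel point process. Ordering $A\cup B$ with the $A$-indices first and writing the matrix $\bigl(\mathcal J(z;u,v)\bigr)_{u,v\in A\cup B}$ in block form (blocks $A\times A$, $A\times B$, $B\times A$, $B\times B$), one has the identity
\[
\mathcal J(z)[A\cup B]-\mathcal J(z)[A]\,\mathcal J(z)[B]=\sum_{\sigma}\operatorname{sgn}(\sigma)\prod_{w\in A\cup B}\mathcal J(z;w,\sigma(w)),
\]
the sum over permutations $\sigma$ of $A\cup B$ not mapping $A$ onto $A$; every term contains at least one factor $\mathcal J(z;u,v)$ with $u\in A$, $v\in B$. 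For $\theta=\sqrt z$ with $|\theta/\sqrt n-1|$ small and $|u|,|v|<(2-\varepsilon)\sqrt n$, the Debye asymptotics (\ref{debyewatson}) give $|J_w(2\sqrt z)|\le C(\varepsilon)n^{-1/4}e^{\gamma|z-n|/\sqrt n}$, so $|\mathcal J(z;u,v)|\le C(\varepsilon)|u-v|^{-1}e^{\gamma'|z-n|/\sqrt n}$ and every matrix entry is $O\bigl(e^{\gamma''|z-n|/\sqrt n}\bigr)$; since the number of terms and the matrix size are bounded in terms of $L$,
\[
\bigl|\mathcal J(z)[A\cup B]-\mathcal J(z)[A]\,\mathcal J(z)[B]\bigr|\le\frac{C(\varepsilon,L)}{|x-y|+1}\,e^{\gamma(\varepsilon,L)|z-n|/\sqrt n}.
\]
At the real point $z=n$ the left side is exactly $\ee_{\mathcal J(n)}(c_Ac_B)-\ee_{\mathcal J(n)}(c_A)\ee_{\mathcal J(n)}(c_B)$, which is therefore $O\bigl((|x-y|+1)^{-1}\bigr)$.

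It remains to depoissonize. Since the depoissonization of a product of two sequences is not the product of the two depoissonizations, the covariance is not itself of the form (\ref{fnz}) and cannot be depoissonized directly; instead I depoissonize the three functions $z\mapsto\mathcal J(z)[A\cup B]$, $\mathcal J(z)[A]$, $\mathcal J(z)[B]$ separately. Each is, up to a power of $\sqrt z$, a polynomial in the Bessel functions $J_w(2\sqrt z)$ with $|w|\le(2-\varepsilon)\sqrt n+L+1$, so conditions 2) and 3) of Lemma \ref{depoismod} hold for it by the contour-integral estimates for Bessel functions. I then apply Corollary \ref{depoisan} at the index $n$, with $a_n=(|x-y|+1)^{-1}$ and with $f_\infty$ the value at $z=n$ of the relevant function; the hypothesis to be verified is that each of the three functions differs from its value at $z=n$ by at most $C(\varepsilon,L)(|x-y|+1)^{-1}$ throughout the disc $|z-n|<n^{1/4}$. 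This follows from the estimates $\partial_z\mathcal J(z;u,v)=O(n^{-1})$ when $u,v$ lie within $2L$ of each other --- a cancellation of the oscillatory leading terms in the Debye expansion of $J_u(2\sqrt z)J_{v+1}(2\sqrt z)-J_{u+1}(2\sqrt z)J_v(2\sqrt z)$ --- and $\partial_z\mathcal J(z;u,v)=O\bigl(n^{-1/2}|u-v|^{-1}\bigr)$ for the mixed entries, integrated over a disc of radius $n^{1/4}$ and combined with $|x-y|+1\le 5\sqrt n$. Hence $\ee_{\pln}(c_S)=\ee_{\mathcal J(n)}(c_S)+O\bigl((|x-y|+1)^{-1}\bigr)$ for $S\in\{A\cup B,A,B\}$; since $\ee_{\mathcal J(n)}(c_A),\ee_{\mathcal J(n)}(c_B)\in[0,1]$, multiplying out and using the previous paragraph gives
\[
\ee_{\pln}(c_{x+\vec l}\,c_{y+\vec m})-\ee_{\pln}(c_{x+\vec l})\,\ee_{\pln}(c_{y+\vec m})=\ee_{\mathcal J(n)}(c_Ac_B)-\ee_{\mathcal J(n)}(c_A)\ee_{\mathcal J(n)}(c_B)+O\!\left(\frac{1}{|x-y|+1}\right),
\]
and the right side is $O\bigl((|x-y|+1)^{-1}\bigr)$, with a constant depending only on $\varepsilon$ and $L$.

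The main obstacle is this last step. The block-determinant expansion makes the Bessel-process estimate immediate, and it survives for complex parameter because the Debye asymptotics are uniform on the relevant discs; the real work is transferring it to the Plancherel measure without losing the rate $(|x-y|+1)^{-1}$. A crude use of depoissonization (Lemma \ref{depoismod}) loses a constant, and even the naive size $O(n^{-1/2})$ of the oscillatory Debye corrections to $\mathcal J(z;u,v)$ would leave a spurious factor $n^{\delta}$ in the final bound; one genuinely needs the slow variation $O(n^{\delta-1})$ of $\mathcal J(z)[S]$ on a polynomially small disc, resting on the cancellation noted above. In the paper this slow variation is extracted, more robustly, from Okounkov's contour-integral representation of the discrete Bessel kernel (Section \ref{secbessel}).
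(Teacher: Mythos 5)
Your first half coincides with the paper's: the bound for the Bessel process is exactly Lemma \ref{correlbessel} (the paper even records the stronger decay $(|x-y|+1)^{-2}$, which is not needed), obtained from the Debye bound on the kernel entries plus the block permutation expansion of the determinant. Where you genuinely diverge is the depoissonization. The paper never compares $\mathcal J(z)[S]$ with its value at $z=n$: it replaces the subtracted product by the $z$-\emph{independent} sine-kernel product, via (\ref{jarccos}), Proposition \ref{ejtheta} and (\ref{jthetasin}), so that the pair correlation minus this constant is a single entire function to which Corollary \ref{depoisan} applies with $a_n=(|x-y|+1)^{-1}$; the one-point estimate (\ref{eplnsin}) then converts the sine constants back into Plancherel expectations at a cost $O(n^{-1/2})=O((|x-y|+1)^{-1})$. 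You instead depoissonize the three correlation functions against their values at $z=n$, which obliges you to control the variation of $\mathcal J(z)[A\cup B]$, $\mathcal J(z)[A]$, $\mathcal J(z)[B]$ over the disc $|z-n|<n^{1/4}$, and that is where the only real gap sits: the near-diagonal bound $\partial_z\mathcal J(z;u,v)=O(n^{-1})$ is asserted through ``a cancellation of the oscillatory leading terms'' but not proved.

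The claim is true, and you do not need the Debye expansion to see it: from $2J_m'(w)=J_{m-1}(w)-J_{m+1}(w)$ and $J_{m-1}(w)+J_{m+1}(w)=\frac{2m}{w}J_m(w)$ (or from the series form of the kernel) one gets the exact identity
\[
\frac{\partial}{\partial\theta}\,\mathcal J(x,y;\theta^2)=J_x(2\theta)J_{y+1}(2\theta)+J_{x+1}(2\theta)J_y(2\theta),
\]
valid for all $x,y$, so in the bulk $\partial_z\mathcal J=\frac{1}{2\sqrt z}\partial_\theta\mathcal J=O\bigl(n^{-1}e^{\gamma|\theta-\sqrt n|}\bigr)$ for every entry, diagonal or not, and your variation estimates follow at once. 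Note also that the cancellation is not actually indispensable: since $|x|,|y|<2\sqrt n$ force $|x-y|+1\le 5\sqrt n$, any bound $|\mathcal J(z)[S]-\mathcal J(n)[S]|\le Cn^{-1/2}e^{\gamma|z-n|/\sqrt n}$ on the small disc already verifies the hypothesis of Corollary \ref{depoisan} with $a_n=(|x-y|+1)^{-1}$, and such a bound follows by comparing both $\mathcal J(z;u,v)$ and $\mathcal J(n;u,v)$ to the slowly varying sine-kernel value as in (\ref{jarccos}) --- which is, in essence, the paper's mechanism, the sine kernel serving as the slow reference so that no derivative estimates are ever needed. (A small correction to your closing remark: for this lemma the paper's slow reference comes from the Debye asymptotics, not from Okounkov's contour integral of Section \ref{secbessel}, which is used only for the edge estimate of Lemma \ref{besselmain}.) With either the identity above or the sine-kernel comparison inserted, your argument is complete and correct.
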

\subsection{Frequency of Local Patterns.}
Lemma \ref{correlplan} will be used to in the next section to prove the following
\begin{lemma}
\label{localergodic}
For any continuous bounded function
$f: {\mathbb R}\to {\mathbb C}$ and any integer vector ${\vec m}$,
the sequence of random variables
\begin{equation}
\frac1{\sqrt{n}} \sum\limits_{k=-\infty}^{\infty} f\left(\frac{k}{\sqrt{n}}\right)c_{k+{\vec m}}(\la)
\end{equation}
 converges, as
$n\to\infty$,
to the constant
$$
\int\limits_{-2}^2 f(a) \ee_{{\mathcal S}(a)}c_{{\vec m}} da
$$
according to the Plancherel measure.
\end{lemma}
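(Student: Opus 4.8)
\medskip
\noindent\emph{Proof proposal.}
The plan is to run a second moment argument — this is the $L^{2}$ form of the ``local ergodic theorem'' for the Plancherel measure, and the mechanism is exactly the decay of correlations supplied by Lemma~\ref{correlplan}. Set
\[
\xi_{n}(\la)=\frac1{\sqrt n}\sum_{k=-\infty}^{\infty} f\!\left(\frac{k}{\sqrt n}\right)c_{k+\vec m}(\la),
\qquad
\beta=\int_{-2}^{2} f(a)\,\ee_{\SIN(a)}c_{\vec m}\,da,
\]
and let $L=|\vec m|$. By the Chebyshev inequality it suffices to show that $\ee_{\pln}\xi_{n}\to\beta$ and that $\ee_{\pln}\,|\xi_{n}-\ee_{\pln}\xi_{n}|^{2}\to 0$ as $n\to\infty$.

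First I would compute the mean. Interchanging the sum with the expectation gives $\ee_{\pln}\xi_{n}=\frac1{\sqrt n}\sum_{k}f(k/\sqrt n)\,\ee_{\pln}\bigl(c_{k+\vec m}(\la)\bigr)$, where $0\le \ee_{\pln}(c_{k+\vec m}(\la))\le 1$ since $c_{k+\vec m}$ is $\{0,1\}$-valued. Fix $\varepsilon>0$. On the bulk range $|k|<(2-\varepsilon)\sqrt n$ the Borodin--Okounkov--Olshanski theorem gives $\ee_{\pln}(c_{x_{n}+\vec m}(\la))\to\ee_{\SIN(a)}(c_{\vec m})$ along every sequence with $x_{n}/\sqrt n\to a$; since the limit $a\mapsto\ee_{\SIN(a)}(c_{\vec m})=\det(\SIN(m_{i}-m_{j},a))$ is a continuous function, this pointwise-along-all-sequences convergence is automatically uniform on compact subsets of $(-2,2)$. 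Together with the uniform continuity of $f$ on compacts, the bulk portion of $\ee_{\pln}\xi_{n}$ is then a Riemann sum converging to $\int_{|a|\le 2-\varepsilon}f(a)\ee_{\SIN(a)}(c_{\vec m})\,da$. The remaining ranges of $k$ are controlled by Proposition~\ref{ydelta}: the portion with $(2-\varepsilon)\sqrt n\le|k|\le(2+\varepsilon)\sqrt n$ is $O(\varepsilon\|f\|_{\infty})$ by the trivial bound $\ee_{\pln}(c_{k+\vec m})\le 1$ (this range has $O(\varepsilon\sqrt n)$ points), and beyond $(2+\varepsilon)\sqrt n$ the patterns $c_{k+\vec m}$ take their limiting deterministic values with probability exponentially close to $1$, so that portion is negligible. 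Letting $n\to\infty$ and then $\varepsilon\to0$ yields $\ee_{\pln}\xi_{n}\to\beta$.

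Next I would bound the variance. Expanding and using that the $c$'s are real,
\[
\ee_{\pln}\,|\xi_{n}-\ee_{\pln}\xi_{n}|^{2}
=\frac1n\sum_{k,l}f\!\left(\frac{k}{\sqrt n}\right)\overline{f\!\left(\frac{l}{\sqrt n}\right)}
\Bigl(\ee_{\pln}(c_{k+\vec m}c_{l+\vec m})-\ee_{\pln}(c_{k+\vec m})\,\ee_{\pln}(c_{l+\vec m})\Bigr).
\]
Split the double sum into the part where both $|k|,|l|<(2-\varepsilon)\sqrt n$ and the part where at least one of $k,l$ falls outside this range. For the first part Lemma~\ref{correlplan}, applied with $\vec l=\vec m$, $x=k$, $y=l$, bounds the covariance by $C(\varepsilon,L)/(|k-l|+1)$, so this part is at most
\[
\frac{C(\varepsilon,L)\,\|f\|_{\infty}^{2}}{n}\sum_{|k|,|l|\le 2\sqrt n}\frac1{|k-l|+1}
\;\le\;\frac{C'(\varepsilon,L)\,\|f\|_{\infty}^{2}}{n}\,\sqrt n\,\log n\;\longrightarrow\;0,
\]
where I used the elementary estimate $\sum_{|k|,|l|\le R}(|k-l|+1)^{-1}=O(R\log R)$. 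For the second part I would bound the covariance crudely by $\ee_{\pln}(c_{k+\vec m})$ (using $\ee(XY),\,\ee X\,\ee Y\in[0,\ee X]$ for $\{0,1\}$-valued $X,Y$) and invoke Proposition~\ref{ydelta} to see that the sum of $\ee_{\pln}(c_{k+\vec m})$ over the edge range is $O(\varepsilon\sqrt n)+o(1)$, which makes this part $O(\varepsilon\|f\|_{\infty}^{2})$. Letting $n\to\infty$ and then $\varepsilon\to0$ gives $\ee_{\pln}\,|\xi_{n}-\ee_{\pln}\xi_{n}|^{2}\to0$, and the lemma follows.

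The step I expect to be the main obstacle is the bulk variance estimate: everything hinges on having a decay-of-correlations bound of the form $C/(|x-y|+1)$ (Lemma~\ref{correlplan}), since this is exactly what is needed for $n^{-1}\sum_{|k|,|l|\le 2\sqrt n}(|k-l|+1)^{-1}$ to be $o(1)$ — any genuine decay rate would do, but some nontrivial decay is essential. The treatment of the two edge regions in both the mean and the variance is routine once Proposition~\ref{ydelta} is available.
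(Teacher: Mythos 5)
Your proposal is correct and follows essentially the same route as the paper: there the lemma is likewise proved by a second-moment argument in which Lemma \ref{correlplan} controls the bulk covariances (yielding a bound of order $\sqrt{n}\log n/n$ after summation), the bulk sum is identified as a Riemann sum for $\int f(a)\,\ee_{\SIN(a)}c_{\vec m}\,da$, and the truncation to $|k|\le(2-\varepsilon)\sqrt{n}$ is removed via Proposition \ref{ydelta}. The only difference is bookkeeping: the paper centers the truncated sum directly at $\ee_{\SIN(k/\sqrt{n})}c_{\vec m}$, using the quantitative estimate $\left|\ee_{\pln}(c_{k+\vec l})-\ee_{\SIN(k/\sqrt{n})}(c_{\vec l})\right|\le C/\sqrt{n}$ obtained from Proposition \ref{ejtheta} by depoissonization, whereas you split into a mean (handled by upgrading the soft Borodin--Okounkov--Olshanski convergence to uniformity on the bulk) and a variance centered at $\ee_{\pln}$ --- the same second-moment mechanism.
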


We shall see in the next section that Lemma \ref{hooklength} is a simple
corollary of Lemma \ref{localergodic}.
Lemma \ref{lochone} admits the following more precise version, which will also be derived from
Lemma \ref{localergodic}.
\begin{lemma}
\label{loch}
For any $h_0>0$ the random variables
$$
\frac 1 {\sqrt{n}}\int\limits_0^{h_0}\int\limits_{-\infty}^{\infty}
\left(\frac{F_{\la}(t+h)-F_{\la}(t)}{h} \right)^2 dt dh
$$
converge to the  constant
$$
\int\limits_{-2}^2 \int\limits_0^1 \int\limits_0^{h_0}\ee_{{\mathcal S}(a)} \left(\frac{\Phi_{\omega}(s+h)-\Phi_{\omega}(s)}{h}-
\frac 2{\pi}\arcsin(a/2)\right)^2 dhdsda
$$
according to the
Plancherel measure.
\end{lemma}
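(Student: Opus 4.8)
The plan is to rewrite the double integral as a weighted sum of local-pattern indicators running along the boundary of $\la$ and then to invoke Lemma~\ref{localergodic}. Fix a small $\varepsilon>0$. First I would pass to the bulk: by the Vershik--Kerov bound $\pln\{\la_1,\la_1'<(2+\varepsilon)\sqrt n\}\to1$ (a fortiori Proposition~\ref{ydelta}(2)), with $\pln$-probability tending to $1$ the function $F_\la$ vanishes for $|t|>(2+\varepsilon)\sqrt n$; since $F_\la$ is $2$-Lipschitz the integrand never exceeds $4$, so the part of the integral with $|t|\ge(2-\varepsilon)\sqrt n$ contributes at most $O(\varepsilon h_0)$ after division by $\sqrt n$, and the corresponding truncation $|a|\le 2-\varepsilon$ changes the target constant by $O(\varepsilon h_0)$ as well. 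On the bulk $|t|\le(2-\varepsilon)\sqrt n$ I would use the identity
\[
F_\la(t+h)-F_\la(t)=\bigl(\Phi_{c(\la)}(t+h)-\Phi_{c(\la)}(t)\bigr)-\int_{t}^{t+h}\frac{2}{\pi}\arcsin\!\left(\frac{r}{2\sqrt n}\right) dr ,
\]
valid because $\Phi_\la-\Phi_{c(\la)}$ is constant and $\Omega'(a)=\frac{2}{\pi}\arcsin(a/2)$ for $|a|<2$; on the bulk the last integral differs from $h\cdot\frac{2}{\pi}\arcsin(t/2\sqrt n)$ by $O_\varepsilon(h^2/\sqrt n)$. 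Writing $t=k+u$ with $k=\lfloor t\rfloor$, $u\in[0,1)$, using $\Phi_{c(\la)}(k+v)-\Phi_{c(\la)}(k)=\Phi_{T^{k}c(\la)}(v)$ for the shifted sequence $(T^{k}\omega)_j=\omega_{k+j}$, and $\arcsin((k+u)/2\sqrt n)=\arcsin(k/2\sqrt n)+O_\varepsilon(1/\sqrt n)$, I would obtain that the random variable in the Lemma equals, up to an additive error that is $O(\varepsilon h_0)+o(1)$ on an event of $\pln$-probability $\to1$,
\[
\frac{1}{\sqrt n}\sum_{k}\chi_\varepsilon\!\left(\frac{k}{\sqrt n}\right) G\!\left(T^{k}c(\la),\,\frac{k}{\sqrt n}\right),\qquad
G(\omega,a)=\int_{0}^{h_0}\!\!\int_{0}^{1}\left(\frac{\Phi_\omega(u+h)-\Phi_\omega(u)}{h}-\frac{2}{\pi}\arcsin(a/2)\right)^{2} du\,dh ,
\]
where $\chi_\varepsilon$ is a fixed continuous cutoff, equal to $1$ on $[-(2-\varepsilon),2-\varepsilon]$ and vanishing outside $[-(2-\tfrac{\varepsilon}{2}),2-\tfrac{\varepsilon}{2}]$ (replacing the sharp cutoff by $\chi_\varepsilon$ costs another $O(\varepsilon h_0)$).

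The key structural point is that $G(\omega,a)$ depends on $\omega$ only through $\omega_0,\dots,\omega_M$ for some finite $M=M(h_0)$, hence is a polynomial — in fact multilinear — in those coordinates: $G(\omega,a)=\sum_{S\subseteq\{0,\dots,M\}}\beta_S(a)\prod_{j\in S}\omega_j$, and expanding the square shows that every $\beta_S$ is a polynomial in $\frac{2}{\pi}\arcsin(a/2)$, hence continuous and bounded on $[-2,2]$. Therefore $G(T^{k}c(\la),k/\sqrt n)=\sum_S\beta_S(k/\sqrt n)\,c_{k+\vec{m}_S}(\la)$, with $\vec{m}_S$ the increasing list of elements of $S$, so the displayed variable is a finite sum of variables $\frac1{\sqrt n}\sum_k f(k/\sqrt n)c_{k+\vec{m}}(\la)$ with $f=\chi_\varepsilon\beta_S$ continuous and bounded (the term $S=\emptyset$ being a plain Riemann sum). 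Applying Lemma~\ref{localergodic} term by term and summing, the displayed variable converges, according to the Plancherel measure, to
\[
\sum_S\int_{-2}^{2}\chi_\varepsilon(a)\beta_S(a)\,\ee_{{\mathcal S}(a)}c_{\vec{m}_S}\,da
=\int_{-2}^{2}\chi_\varepsilon(a)\,\ee_{{\mathcal S}(a)}G(\omega,a)\,da
=\int_{-2}^{2}\!\int_{0}^{1}\!\int_{0}^{h_0}\chi_\varepsilon(a)\,\ee_{{\mathcal S}(a)}\left(\frac{\Phi_\omega(s+h)-\Phi_\omega(s)}{h}-\frac{2}{\pi}\arcsin(a/2)\right)^{2} dh\,ds\,da ,
\]
by linearity of $\ee_{{\mathcal S}(a)}$ and Fubini (all integrands bounded). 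Removing $\chi_\varepsilon$ changes this by $O(\varepsilon h_0)$, so for every $\varepsilon>0$ the original random variable is, with $\pln$-probability $\to1$, within $O(\varepsilon h_0)$ of the constant asserted in Lemma~\ref{loch}; letting $\varepsilon\to0$ finishes the proof.

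The determinantal input enters only through the already-established Lemma~\ref{localergodic}, so I expect no conceptual difficulty. The real work is bookkeeping the bulk estimates uniformly: one must track the factors $\varepsilon^{-1/2}$ arising because $a\mapsto\arcsin(a/2)$, while Lipschitz on $[-(2-\varepsilon),2-\varepsilon]$, has Lipschitz constant of order $\varepsilon^{-1/2}$ as $\varepsilon\to0$, and check that the per-point errors of sizes $h^2/\sqrt n$ and $1/\sqrt n$ really become $o(1)$ after multiplication by $h_0$, summation over the $O(\sqrt n)$ relevant $k$, and division by $\sqrt n$ — and, symmetrically, that each discarded slab near the edge $|t|\sim 2\sqrt n$ contributes only $O(\varepsilon h_0)$. (Lemma~\ref{lochone}, and Lemma~\ref{hooklength}, follow from the same scheme, the latter by taking $G$ to be the indicator of the local pattern producing hook length $k$.)
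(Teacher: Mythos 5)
Your proposal is correct and follows essentially the same route as the paper: truncate to the bulk via Proposition \ref{ydelta}, write the difference quotient of $F_\la$ as a finite linear combination of the boundary symbols $c_{k+j}(\la)$ plus an $\arcsin$ term with an $O_\varepsilon(1/\sqrt n)$ per-point error, and invoke the local-pattern convergence result before letting $\varepsilon\to0$. The only (immaterial) differences are that you expand the square multilinearly and apply Lemma \ref{localergodic} term by term with a continuous cutoff, whereas the paper fixes $s\in(0,1)$, applies the $L^2$ statement (Proposition \ref{epslocal}) directly to the truncated quadratic sum, and integrates in $s$ at the end.
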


For the constant $H$, the entropy of the Plancherel measure, we now obtain
\begin{multline}
\label{formulaalpha}
H=\frac14 \int\limits_{-2}^2 \int\limits_0^1 \int\limits_0^{\infty} \ee_{{\mathcal S}(a)}\left(\frac{\Phi_{\omega}(s+h)-\Phi_{\omega}(s)}{h}-
\frac 2{\pi}\arcsin(a/2)\right)^2 dhdsda+\\
+\frac{32}{\pi^2}\sum\limits_{k=1}^{\infty}\sum\limits_{l=1}^{\infty}
\frac{1}{l(l+1)(2l+1)k^{2l-2}(4k^2-1)}.
\end{multline}

Convergence of the integral in $h$ is clear from Proposition \ref{varsinproc}.

\section{Proof of Lemmas \ref{correlplan}, \ref{localergodic}, \ref{loch},
 \ref{hooklength}, \ref{lochone}.}

\subsection{Decay of Correlations for the Bessel Point Process.}

Given a measure $\Prob$ on $\Omega_2$,  any natural $x, y$, and any integer vectors $\vec{l}= (l_1, \dots, l_r)$,
$\vec{m}=( m_1, \dots, m_s)$,
denote
$$
\mathrm {Cov}_{\Prob}(x, \vec{l}; y, \vec{m})=
\ee_{\Prob}(c_{x+\vec{l}}\cdot c_{y+\vec{m}})-\ee_{\Prob}(c_{x+\vec{l}})\cdot
\ee_{\Prob}(c_{y+\vec{m}}).
$$
For complex $\theta$ the expression
$\mathrm {Cov}_{\J(\theta^2)}(x, \vec{l}; y, \vec{m})$ is defined formally,
by analytic continuation. Our next aim is to estimate this quantity from above.

The representation (\ref{debyewatson}) implies, in particular, the existence of constants
$C, \gamma, \varepsilon_0$ depending only on $\varepsilon$ such that
\begin{equation}
\label{jxy}
|\theta(J_x(2\theta)J_{y+1}(2\theta)-J_y(2\theta)J_{x+1}(2\theta))|\leq
C\exp(\gamma|\theta-\sqrt{n}|),
\end{equation}
provided
$x, y\in N_n^{(\varepsilon)}$, $|\theta/\sqrt{n}-1|<\varepsilon_0$, whence, if $x\neq y$,
we have
$$
\left| \J(x,y; \theta^2)\right|\leq \frac{
C\exp(\gamma|\theta-\sqrt{n}|)}{|x-y|}. $$

The function $\J(x,y,\theta^2)$ is entire in $x,y$ and, in the same way as in $(3.7)$ on p.498 in \cite{BOO},
write
$$
\J(x,x,\theta^2)=\frac 1{2\pi} \int\limits_0^{2\pi} \J(x, x+r\exp(it), \theta^2)dt,
$$
where $r$ is arbitrary. This representation shows that
in the case $x=y$ we also have
\begin{equation}
\label{jxx}
\left| \J(x,x; \theta^2)\right|\leq C\exp(\gamma|\theta-\sqrt{n}|)
\end{equation}

We have  established  the following
\begin{lemma}
\label{correlbessel}
For any $\varepsilon>0$, $L>0$,
there exist positive constants $C=C(\varepsilon, L)$,
$\gamma=\gamma(\varepsilon, L)$
and $\varepsilon_0$ depending only on $\varepsilon$, such that for any $n>0$,
any $\theta$ satisfying $|\theta|=\sqrt{n}$, $|\theta/\sqrt{n}-1|<\varepsilon_0$, any
$x,y\in N_n^{(\varepsilon)}$, and any integer vectors ${\vec l}$, ${\vec m}$  with absolute values not exceeding $L$,
we have
\begin{equation}
\label{xybessel}
\mathrm {Cov}_{\J(\theta^2)}(x, \vec{l}; y, \vec{m})
\leq \frac{C(\varepsilon, L)\exp(\gamma|\theta-\sqrt{n}|)}{\left(|x-y|+1\right)^2}.
\end{equation}
\end{lemma}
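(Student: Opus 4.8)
The plan is to derive the covariance bound from the determinantal structure of the Bessel point process together with the kernel estimates (\ref{jxy}) and (\ref{jxx}) already in hand; no further analytic input is required. First, write $\vec l=(l_1,\dots,l_r)$ and $\vec m=(m_1,\dots,m_s)$. Since the coordinates of an integer vector are distinct and bounded by $L$ in absolute value, $r,s\leq 2L+1$. List the sites as $p_1=x+l_1,\dots,p_r=x+l_r$, $p_{r+1}=y+m_1,\dots,p_{r+s}=y+m_s$, and let $M=\bigl(\J(p_i,p_j;\theta^2)\bigr)_{1\le i,j\le r+s}$. By the defining relation of $\Prob_{\J(\theta^2)}$ (extended to complex $\theta$ by analytic continuation) one has $\ee_{\J(\theta^2)}(c_{x+\vec l}\cdot c_{y+\vec m})=\det M$, $\ee_{\J(\theta^2)}(c_{x+\vec l})=\det A$, $\ee_{\J(\theta^2)}(c_{y+\vec m})=\det D$, where $A,D$ are the diagonal blocks of $M$ of sizes $r$ and $s$. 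Denoting by $B$ (resp. $B'$) the upper-right (resp. lower-left) off-diagonal block, this gives
\[
\mathrm{Cov}_{\J(\theta^2)}(x,\vec l;y,\vec m)=\det\begin{pmatrix}A&B\\ B'&D\end{pmatrix}-\det A\cdot\det D.
\]

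Next, expand the first determinant by the Leibniz formula over permutations $\sigma$ of $\{1,\dots,r+s\}$. Permutations preserving the partition $\{1,\dots,r\}\sqcup\{r+1,\dots,r+s\}$ contribute exactly $\det A\cdot\det D$, so $\mathrm{Cov}_{\J(\theta^2)}(x,\vec l;y,\vec m)$ equals the sum over the remaining, "mixing", permutations. If $\sigma$ is mixing then $\sigma(\{1,\dots,r\})$, a set of size $r$ different from $\{1,\dots,r\}$, both meets $\{r+1,\dots,r+s\}$ and omits a point of $\{1,\dots,r\}$; hence the product $\prod_i M_{i,\sigma(i)}$ contains at least one factor from the block $B$ and at least one from $B'$. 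So $\mathrm{Cov}_{\J(\theta^2)}(x,\vec l;y,\vec m)$ is a sum of at most $(r+s)!\le(4L+2)!$ products of $r+s$ kernel entries, each product having at least two factors in the off-diagonal blocks.

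Then insert the estimates. Put $\tilde\varepsilon=\varepsilon/2$ and let $\tilde C,\tilde\gamma,\tilde\varepsilon_0$ be the constants that (\ref{jxy}), (\ref{jxx}) provide for the parameter $\tilde\varepsilon$; set $\varepsilon_0=\tilde\varepsilon_0$, which depends only on $\varepsilon$. If $n\ge n_0:=(2L/\varepsilon)^2$ then $|p_i|<(2-\tilde\varepsilon)\sqrt n$, so $p_i\in N_n^{(\tilde\varepsilon)}$ for every $i$, and every entry of $M$ is bounded by $R:=\tilde C\exp(\tilde\gamma|\theta-\sqrt n|)$ — on the diagonal by (\ref{jxx}), off the diagonal by (\ref{jxy}) divided by the positive integer distance of the two indices; we may assume $\tilde C\ge 1$, so $R\ge 1$. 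If moreover $|x-y|>2L+1$, then any site of the $x$-group and any site of the $y$-group differ by at least $|x-y|-2L\ge 1$, so every entry of $B$ and $B'$ is at most $R/(|x-y|-2L)\le (2L+3)R/(|x-y|+1)$. Consequently each mixing product is at most $(2L+3)^2 R^{\,r+s}/(|x-y|+1)^2\le (2L+3)^2 R^{\,4L+2}/(|x-y|+1)^2$, and summing yields the claimed bound with $\gamma=(4L+2)\tilde\gamma$ and $C=(4L+2)!\,(2L+3)^2\,\tilde C^{\,4L+2}$. The two remaining cases are absorbed into a larger $C$: for $|x-y|\le 2L+1$ the denominator $(|x-y|+1)^2$ is at most $(2L+2)^2$ while all entries of $M$ are still $\le R$, so by Hadamard's inequality the covariance is bounded by a constant multiple of $R^{\,4L+2}$; for $n<n_0$ the sites $x,y$ lie in the finite set $N_{n_0}^{(\varepsilon)}$, the vectors $\vec l,\vec m$ in a finite set, and $\theta$ in the bounded region $|\theta|<\sqrt{n_0}$, and since the covariance is a polynomial in Bessel functions of $\theta$ it is bounded there, whereas $\exp(\gamma|\theta-\sqrt n|)\ge 1$ and $(|x-y|+1)^{-2}$ is bounded below.

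The only step carrying real content is the combinatorial remark that every mixing permutation forces \emph{two} off-block kernel entries into the product, one from $B$ and one from $B'$: this is exactly what upgrades the $(|x-y|+1)^{-1}$ decay of a single entry of the discrete Bessel kernel to the $(|x-y|+1)^{-2}$ decay of the covariance. The rest is bookkeeping — tracking the exponential prefactor (whose exponent is multiplied by at most $r+s\le 4L+2$), bounding the number of terms (finite because $r,s\le 2L+1$), and treating the finitely many structural situations with $n<n_0$.
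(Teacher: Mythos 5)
Your argument is correct and follows essentially the same route as the paper: the paper establishes exactly the kernel bounds (\ref{jxy}) and (\ref{jxx}) from the Debye asymptotics and then asserts the lemma, leaving implicit the determinantal (Leibniz-expansion) step in which every ``mixing'' permutation contributes at least one factor from each off-diagonal block, yielding the $(|x-y|+1)^{-2}$ decay. Your write-up simply makes that combinatorial step, the passage from $\varepsilon$ to $\varepsilon/2$ for the shifted sites, and the small-$|x-y|$ and small-$n$ cases explicit, which is exactly the bookkeeping the paper treats as immediate.
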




\subsection{Proof of Lemma \ref{correlplan}.}
The Debye asymptotics (\ref{debyewatson}) immediately yields (see \cite{BOO} for details) that
for any $\varepsilon>0$ there exists $\varepsilon_0>0$ and for any $l\in {\mathbb N}$  constants
$C=C(l, \varepsilon)$, $\gamma=\gamma(l, \varepsilon)$
such that
if $\theta$ satisfies  $|\theta|=\sqrt{n}$, $|\theta/\sqrt{n}-1|<\varepsilon_0$, then for any
$x\in N_n^{(\varepsilon)}$ we have
\begin{equation}
\label{jarccos}
|\J(x,x+l, \theta^2)- {\mathcal S}(\frac{x}{2\theta}, l)|\leq
\frac{C\exp(\gamma|\theta-\sqrt{n}|)}{\sqrt{n}}.
\end{equation}

By definition of a determinantal process, the estimate (\ref{jarccos}) implies the following
\begin{proposition}
\label{ejtheta}
For any $\varepsilon>0$, $L>0$,
there exist positive constants $C=C(\varepsilon, L)$,  $\gamma=\gamma(\varepsilon, L)$
and $\varepsilon_0$ depending only on $\varepsilon$, such that for any $n>0$,
any $\theta$ satisfying $|\theta|=\sqrt{n}$, $|\theta/\sqrt{n}-1|<\varepsilon_0$, any
$x\in N_n^{(\varepsilon)}$, and any integer vector ${\vec l}$ satisfying $|{\vec l}|\leq L$,
we have
\begin{equation}
\label{jthetasin}
\left|\ee_{\J(\theta^2)}(c_{x+\vec{l}})-\ee_{{\mathcal S}(x/2\sqrt{n})}(c_{\vec{l}})\right|\leq
\frac{C\exp(\gamma|\theta-\sqrt{n}|)}{\sqrt{n}}.
\end{equation}
\end{proposition}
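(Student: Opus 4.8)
The plan is to deduce the estimate \eqref{jthetasin} for expectations of occupation variables from the kernel estimate \eqref{jarccos} by a purely algebraic comparison of the two determinants that compute these expectations. By definition of the determinantal process associated to the kernel $\J(\theta^2)$ and the definition of the discrete sine-process $\SIN(x/2\sqrt n)$, we have
\[
\ee_{\J(\theta^2)}(c_{x+\vec l})=\det\bigl(\J(x+l_i,\,x+l_j;\,\theta^2)\bigr)_{i,j=1,\dots,r},
\qquad
\ee_{\SIN(x/2\sqrt n)}(c_{\vec l})=\det\bigl(\SIN(l_i-l_j,\,x/2\sqrt n)\bigr)_{i,j=1,\dots,r},
\]
where $r\le 2L+1$ is bounded in terms of $L$ alone. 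The first step is to record that \eqref{jarccos}, applied with $l$ replaced by each of the finitely many differences $l_i-l_j$ (all of absolute value $\le 2L$), gives
\[
\Bigl|\J(x+l_i,\,x+l_j;\,\theta^2)-\SIN\bigl(l_i-l_j,\,x/2\sqrt n\bigr)\Bigr|\le \frac{C\exp(\gamma|\theta-\sqrt n|)}{\sqrt n},
\]
after absorbing the (bounded, $x$-dependent but uniformly controlled) discrepancy between $\SIN(\,\cdot\,,x/2\theta)$ and $\SIN(\,\cdot\,,x/2\sqrt n)$ into the constant; here one uses that $|\theta/\sqrt n-1|<\varepsilon_0$ forces $x/2\theta$ and $x/2\sqrt n$ to lie in a fixed compact subset of $(-1,1)$ on which the sine-kernel entries are Lipschitz in the parameter, and that $|x/2\theta-x/2\sqrt n|=O(|\theta-\sqrt n|/\sqrt n)$, which is dominated by $\exp(\gamma|\theta-\sqrt n|)/\sqrt n$ after enlarging $\gamma$.

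The second step is the determinant comparison. Both matrices have entries bounded in absolute value by a constant $M=M(L)$ (the sine-kernel entries are obviously bounded; the $\J$-entries are then bounded by $M+1$ once $n$ is large, using the estimate above, and for the finitely many small $n$ one adjusts constants), and the two matrices differ entrywise by at most $\delta:=C\exp(\gamma|\theta-\sqrt n|)/\sqrt n$. Expanding the determinant as a sum over permutations and using the standard multilinearity estimate
\[
\bigl|\det A-\det B\bigr|\le r\cdot r!\cdot M^{\,r-1}\cdot \max_{i,j}|A_{ij}-B_{ij}|
\]
for $r\times r$ matrices with entries bounded by $M$, together with $r\le 2L+1$, yields \eqref{jthetasin} with a new constant $C=C(\varepsilon,L)$ and the same $\gamma$ (up to relabeling). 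This is exactly the inequality claimed.

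The only genuinely delicate point is the passage, in the first step, from the sine-kernel at parameter $x/2\theta$ (which is what \eqref{jarccos} naturally produces through the Debye asymptotics) to the sine-kernel at parameter $x/2\sqrt n$ (which appears in the statement): one must check that replacing $\theta$ by $\sqrt n$ in the argument of $\arccos$ costs only $O(|\theta-\sqrt n|/\sqrt n)$ uniformly for $x\in N_n^{(\varepsilon)}$, which is where the hypotheses $|\theta|=\sqrt n$, $|\theta/\sqrt n-1|<\varepsilon_0$ and $x\in N_n^{(\varepsilon)}$ are all used to keep the relevant quantities away from the singular values $\pm1$ of $\arccos$. Everything else is bookkeeping: the combinatorial determinant estimate is classical and the constants depend only on $\varepsilon$ and $L$ as required, since $r$ is controlled by $L$. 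Hence \eqref{jarccos} indeed implies Proposition \ref{ejtheta}.
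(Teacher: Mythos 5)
Your proposal is correct and is essentially the paper's own argument: the paper deduces Proposition \ref{ejtheta} from the kernel estimate (\ref{jarccos}) precisely ``by definition of a determinantal process'', i.e.\ by writing both expectations as $r\times r$ determinants of kernel entries and comparing them entrywise via multilinearity, exactly as you do, including the absorption of the $x/2\theta$ versus $x/2\sqrt n$ discrepancy into the factor $\exp(\gamma|\theta-\sqrt n|)$. One small correction: for complex $\theta$ with $|\theta|=\sqrt n$ the entries $\J(x+l_i,x+l_j;\theta^2)$ are \emph{not} bounded by $M+1$ for large $n$ --- they can be of size $C\exp(\gamma|\theta-\sqrt n|)$, cf.\ (\ref{jxy}) and (\ref{jxx}), since $|\theta-\sqrt n|$ may be of order $\varepsilon_0\sqrt n$ --- so in your telescoping estimate the factor $M^{\,r-1}$ must be replaced by $\bigl(C\exp(\gamma|\theta-\sqrt n|)\bigr)^{r-1}$, which is harmless: it merely multiplies $\gamma$ by at most $r\le 2L+1$, permitted because $\gamma$ is allowed to depend on $\varepsilon$ and $L$.
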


Depoissonizing  by Lemma \ref{depoisson}, we obtain
\begin{equation}
\label{eplnsin}
\left|\ee_{\pln}(c_{x+\vec{l}})-\ee_{{\mathcal S}(x/\sqrt{n})}(c_{\vec{l}})\right|\leq
\frac{C(\varepsilon, L)}{\sqrt{n}}.
\end{equation}

As a simple example, taking $l=0$ in (\ref{jarccos}) and depoissonizing by Lemma \ref{depoisson} yields
\begin{equation}
\label{plnarccos}
|\ee_{\pln}(c_x) - \frac 1{\pi} \arccos(x/2\sqrt{n})|\leq \frac{C(\varepsilon)}{\sqrt{n}}.
\end{equation}

Substituting (\ref{jthetasin}) into (\ref{xybessel}), we obtain
$$
\big|\ee_{\J(\theta^2)}(c_{x+\vec{l}} \cdot
c_{y+\vec{m}})-\ee_{\SIN(\frac{x}{\sqrt{n}})}(c_{x+\vec{l}})\cdot
\ee_{\SIN(\frac{y}{\sqrt{n}})}(c_{y+\vec{m}})\big|
\leq 
$$
$$
\leq C(\varepsilon, L)\exp(\gamma|\theta-\sqrt{n}|)\left(\left(\frac{1}{|x-y|+1}\right)^2+\frac{1}{\sqrt{n}}\right),
$$
whence, by the depoissonization Lemma \ref{depoisson}, we have
\begin{multline}
\left|\ee_{\pln}(c_{x+\vec{l}}\cdot
c_{y+\vec{m}})-\ee_{\SIN(\frac{x}{\sqrt{n}})}(c_{\vec{l}})\cdot
\ee_{\SIN(\frac{y}{\sqrt{n}})}(c_{\vec{m}})\right|\leq \\
\leq C(\varepsilon, L)\left(\left(\frac{1}{|x-y|+1}\right)^2+\frac{1}{\sqrt{n}}\right).
\end{multline}
 Finally, using (\ref{eplnsin}), we write
\begin{multline}
\left|\ee_{\pln}(c_{x+\vec{l}}\cdot
c_{y+\vec{m}})-\ee_{\pln}(c_{x+\vec{l}})\cdot
\ee_{\pln}(c_{y+\vec{m}})\right|
\leq  \\
\leq C(\varepsilon, L)\left(\left(\frac{1}{|x-y|+1}\right)^2+\frac{1}{\sqrt{n}}\right),
\end{multline}
and Lemma \ref{correlplan} is proved.

\subsection{Proof of Lemma \ref{localergodic}.}

As before, let $f:{\mathbb R}\to {\mathbb C}$ be continuous and bounded, let ${\vec m}=(m_1, \dots, m_r)$
be an integer vector and denote
\begin{equation}
S_n({\vec m},f,\la)=\frac1{\sqrt{n}} \sum_{k=-\infty}^{\infty} f\left(\frac{k}{\sqrt{n}}\right)c_{k+{\vec m}}(\la).
\end{equation}

Take $\varepsilon>0$ and consider a modification of the function $S_n({\vec m},f,\la)$ defined by the formula
\begin{equation}
S_n({\vec m},f,\la, \varepsilon)=\frac1{\sqrt{n}}
\sum\limits_{k: |k|\leq(2-\varepsilon)\sqrt{n}}
f\left(\frac k{\sqrt{n}}\right)c_{k+{\vec m}}(\la).
\end{equation}

\begin{proposition}
\label{epslocal}
For any $\varepsilon>0$, any continuous $f$ on $[-2,2]$
and any integer vector $\vec{m}$ we have
\begin{equation}
\label{vareps}
\lim\limits_{n\to\infty}{\ee}_{\pln} \left| S_n({\vec m}, f,\la,
\varepsilon)-\int\limits_{\varepsilon-2}^{2-\varepsilon}
f(a) \left(\ee_{{\mathcal S}(a)}c_{\vec{m}}\right) da  \right|^2=0
\end{equation}
\end{proposition}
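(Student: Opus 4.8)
The plan is to prove convergence in $L^2(\pln)$ by the standard second-moment (variance) argument: show that the expectation of $S_n(\vec m, f, \la, \varepsilon)$ converges to the stated integral, and that its variance tends to $0$; the $L^2$-convergence then follows from $\ee_{\pln}|S_n - I|^2 = \Var_{\pln}(S_n) + |\ee_{\pln} S_n - I|^2$, where $I = \int_{\varepsilon-2}^{2-\varepsilon} f(a)\,\ee_{\mathcal S(a)}c_{\vec m}\,da$.

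\emph{Step 1: Convergence of the mean.} We have
\[
\ee_{\pln} S_n(\vec m, f, \la, \varepsilon)
= \frac{1}{\sqrt n}\sum_{|k|\le (2-\varepsilon)\sqrt n} f\!\left(\frac{k}{\sqrt n}\right)\ee_{\pln}(c_{k+\vec m}).
\]
By the depoissonized estimate \eqref{eplnsin}, $\ee_{\pln}(c_{k+\vec m})$ differs from $\ee_{\mathcal S(k/\sqrt n)}(c_{\vec m})$ by $O(1/\sqrt n)$ uniformly for $k\in N_n^{(\varepsilon)}$ (the number of summands is $O(\sqrt n)$, so the total error is $O(1/\sqrt n)$). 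Thus up to $o(1)$ the mean equals the Riemann sum $\frac{1}{\sqrt n}\sum_{|k|\le(2-\varepsilon)\sqrt n} f(k/\sqrt n)\,\ee_{\mathcal S(k/\sqrt n)}(c_{\vec m})$. Since $f$ is continuous and bounded on $[-2,2]$ and the map $a\mapsto \ee_{\mathcal S(a)}(c_{\vec m})$ is continuous on $(-2,2)$ (and bounded, being an expectation of a $\{0,1\}$-valued variable), this Riemann sum converges to $\int_{\varepsilon-2}^{2-\varepsilon} f(a)\,\ee_{\mathcal S(a)}(c_{\vec m})\,da = I$ as $n\to\infty$.

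\emph{Step 2: Vanishing of the variance.} Expanding,
\[
\Var_{\pln}(S_n) = \frac{1}{n}\sum_{\substack{|x|,|y|\le(2-\varepsilon)\sqrt n}} f\!\left(\frac{x}{\sqrt n}\right)\overline{f\!\left(\frac{y}{\sqrt n}\right)}\,\mathrm{Cov}_{\pln}(x,\vec m; y, \vec m).
\]
By Lemma \ref{correlplan}, each covariance is bounded in absolute value by $C(\varepsilon, L)/(|x-y|+1)$, where $L$ bounds $|\vec m|$. Since $f$ is bounded, say $|f|\le M$, we get
\[
\Var_{\pln}(S_n) \le \frac{M^2 C(\varepsilon,L)}{n}\sum_{|x|,|y|\le(2-\varepsilon)\sqrt n}\frac{1}{|x-y|+1}.
\]
For each fixed $x$, $\sum_{|y|\le(2-\varepsilon)\sqrt n} \frac{1}{|x-y|+1} = O(\log n)$, and there are $O(\sqrt n)$ values of $x$, so the double sum is $O(\sqrt n\log n)$ and $\Var_{\pln}(S_n) = O(n^{-1/2}\log n) \to 0$.

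\emph{Conclusion and main obstacle.} Combining Steps 1 and 2 gives $\ee_{\pln}|S_n(\vec m, f, \la, \varepsilon) - I|^2 \to 0$, which is \eqref{vareps}. The estimates involved are all routine given the inputs; the genuine work has already been done in establishing Lemma \ref{correlplan} (the decay-of-correlations bound) and the depoissonized mean estimate \eqref{eplnsin}, both of which rely on the Debye asymptotics and the depoissonization lemma. The only point requiring a little care is the uniformity in $k$ of the $O(1/\sqrt n)$ error in \eqref{eplnsin} across the range $|k|\le(2-\varepsilon)\sqrt n$ — but since the constant there depends only on $|\vec m|$ and $\varepsilon$, this uniformity is exactly what Lemma \ref{correlplan} and Proposition \ref{ejtheta} provide, so no difficulty arises.
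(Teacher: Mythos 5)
Your proposal is correct and follows essentially the same route as the paper: both arguments rest on the decay-of-correlations bound of Lemma \ref{correlplan} together with the depoissonized one-point estimate (\ref{eplnsin})/Proposition \ref{ejtheta}, and conclude via convergence of the Riemann sums for $\int f(a)\,\ee_{\mathcal S(a)}c_{\vec m}\,da$. The only difference is bookkeeping: you split $\ee_{\pln}|S_n-I|^2$ into variance plus squared bias, while the paper bounds the $L^2$ distance from $S_n$ to the Riemann sum in one step, which amounts to the same estimates.
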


Proof.
We begin by estimating
\begin{equation}
\label{riemannsum}
{\ee}_{\pln} \left| S_n({\vec m}, f,\la,
\varepsilon)-
\frac 1{\sqrt{n}}\sum\limits_{k: |k|\leq (2-\varepsilon)\sqrt{n}}
f\left(\frac k{\sqrt{n}}\right) \left(\ee_{{\mathcal S}(\frac k{\sqrt{n}})}c_{\vec{m}}\right) \right|^2.
\end{equation}

By Lemma \ref{correlplan} and Proposition \ref{ejtheta},  boundedness of $f$ implies the estimate
$$
\left|f\left(\frac k{\sqrt{n}}\right)f\left(\frac l{\sqrt{n}}\right)
{\ee}_{\pln}\left((c_{k+{\vec m}}-
\ee_{{\mathcal S}(\frac{k}{\sqrt{n}})}c_{\vec{m}})\cdot
(c_{l+{\vec m}}-\ee_{{\mathcal S}(\frac{l}{\sqrt{n}})}c_{\vec{m}})\right)\right|\leq \frac{C(f, {\varepsilon})}
{|k-l|+1}.
$$
Summing in $k$ and $l$, we obtain that the integral (\ref{riemannsum}) is bounded above
by  $C(f, \varepsilon)\log^2 n/\sqrt{n}$. Now observe that the quantity
$$
\frac 1{\sqrt{n}}\sum\limits_{k: |k|\leq (2-\varepsilon)\sqrt{n}}
f\left(\frac k{\sqrt{n}}\right) \left(\ee_{{\mathcal S}(\frac{k}{\sqrt{n}})}c_{\vec{m}}\right)
$$
is a Riemann sum for the integral
$$
\int\limits_{\varepsilon-2}^{2-\varepsilon}
f(a) \left(\ee_{{\mathcal S}(a)}c_{\vec{m}}\right) da.
$$
Since the function $f(a)\left(\ee_{{\mathcal S}(a)}c_{\vec{m}}\right)$
is continuous on $[\varepsilon-2, 2-\varepsilon]$, the Riemann sums converge to the integral, and
Proposition \ref{epslocal} is proved.

To derive Lemma \ref{localergodic} from Proposition \ref{epslocal},
note that if $\la\in \Y_{n}$ satisfies $\la_1<(2+\varepsilon_1)\sqrt{n}$, $\la_1^{\prime}<(2+\varepsilon_1)\sqrt{n}$,
then we have
\begin{equation}
\label{epstozero}
|S_n({\vec m},f,\la)-S_n({\vec m},f,\la, \varepsilon)|\leq C({\vec m}, f)(\varepsilon+\varepsilon_1),
\end{equation}
where  $C({\vec m}, f)$ is a constant depending only on ${\vec m}$ and  $f$.
Since $\varepsilon$ and $\varepsilon_1$ can be chosen arbitrarily small,
Lemma \ref{localergodic} follows now from Proposition \ref{ydelta} .

\subsection{Proof of Lemma \ref{hooklength}.}
Observe the clear identity
\begin{equation}
h_k(\la)=\sum\limits_{i=-\infty}^{\infty} \left(c_i(\la)-c_i(\la)c_{i-k}(\la)\right)
\end{equation}
(note that only finitely many terms in the right-hand side are nonzero).
 Lemma \ref{hooklength} is now immediate from Lemma \ref{localergodic}.
For the constant, compute
\begin{multline}
\int\limits_{-2}^2 \left(\frac1\pi \arccos\left(\frac{a}2\right)- \frac1{\pi^2} \arccos^2\left(\frac{a}2\right)+\frac1{k^2\pi^2} \sin^2\left(k\arccos\left(\frac{a}2\right)\right)\right)da=\\
=\frac{32k^2}{(4k^2-1)\pi^2}.
\end{multline}

\subsection{Proof of Lemmas \ref{lochone}, \ref{loch}.}

 For Lemma \ref{loch}, take $\varepsilon>0$ and observe that for $t$ and $h$ satisfying
 $$
 \frac{|t|}{\sqrt{n}}<2-\varepsilon, \ 0< h\leq h_0
 $$
 we have
\begin{equation}
\left| \frac{\sqrt{n}}{h}\left(\Omega\left(\frac{t+h}{\sqrt{n}}\right) - \Omega\left(\frac{t}{\sqrt{n}}\right)\right)-\frac 2{\pi} \arcsin\left(\frac{t}{2\sqrt{n}}\right) \right|\leq \frac{C(\varepsilon, h_0)}{\sqrt{n}},
\end{equation}
where the constant $C(\varepsilon, h_0)$ only depends on $\varepsilon$ and $h_0$.

Now take $s\in (0,1)$ and consider the expression
\begin{equation}
\label{epsphila}
\frac 1{{\sqrt{n}}} \int\limits_0^{h_0}\left(\sum\limits_{k:|k|\leq (2-\varepsilon)\sqrt{n}} \left(\frac{\Phi_{\la}(s+k+h)-\Phi_{\la}(s+k)}{h} -\frac 2{\pi} \arcsin\left(\frac{s+k}{2\sqrt{n}}\right)\right)^2
 \right)dh
\end{equation}

For any $h_0\geq 0$ there exists $N_0=N_0(h_0)$, and for all $h: 0\leq h\leq h_0$,
numbers $\alpha_i^{(h)}$, $i\in {\mathbb Z}$,
satisfying $|\alpha_i^{(h)}|\leq 1$ and $\alpha_i^{(h)}=0$ for  $|i|>N_0(h_0)$ such that for
any $\omega\in\Omega_2$ we have
$$
\frac{\Phi_{\omega}(s+k+h)-\Phi_{\omega}(s+k)}{h}=\sum_{i=-\infty}^{\infty} \alpha_i^{(h)}\omega_{i+k}.
$$

By Proposition \ref{epslocal}, the sum (\ref{epsphila}) converges, with respect to the Plancherel measure,
to the constant
$$
\int\limits_0^{h_0}\int\limits_{\varepsilon-2}^{2-\varepsilon} \ee_{{\mathcal S}(a)}\left(\frac{\Phi_{\omega}(s+h)-\Phi_{\omega}(s)}{h}-
\frac 2{\pi}\arcsin(a/2)\right)^2 da dh.
$$
Taking $\varepsilon$ to $0$ (the transition to the limit is justified in the same way as in (\ref{epstozero}))
and integrating in $s$ from $0$ to $1$, we obtain Lemma \ref{loch}.

\section{Proof of Lemma \ref{tailh}.}
\label{prooftailh}

\subsection{Outline of the Proof.}

The first step in proving Lemma \ref{tailh} is  to reduce integrals
to sums and to observe that summation need only take place ``away from the edge''.

More precisely, let $\delta \in \mathbb R, \, 0 < \delta < \frac{1}{4}$ and let $K>0$.
Denote
\[
\mathbb Y_n(K,\delta) = \{ \lambda \in \mathbb Y_n: \lambda_1 \le 2\sqrt{n} + Kn^\delta, \, \lambda'_1 \le 2\sqrt{n} + Kn^\delta \}.
\]
Denote
\[
F_\lambda^{(L,\delta)}(t) =
\begin{cases}
F_\lambda(t), &\text{if} \quad |t|\le 2\sqrt{n} - Ln^\delta \\
0, &\text{otherwise.}
\end{cases}
\]

\begin{lemma} \label{discrcut}
For any $\delta$ satisfying $0<\delta<\frac{1}{4}$, any $K>0,\, L>0$ and any $\varepsilon >0$, there exists a number $h_0>1$ depending only on $\delta, K, L, \varepsilon$ and such that for any $n \in \mathbb N$ and any $\lambda \in \mathbb Y_n(K,\delta)$ we have the inequality
\begin{multline}
\frac{1}{\sqrt{n}} \int\limits_{h_0}^{+\infty} \int\limits_{-\infty}^{+\infty} \left( \frac{F_\lambda(t+h) - F_\lambda(t)}{h} \right)^2 dt\,dh \leq\\
\leq \frac{4}{\sqrt{n}} \; \sum\limits_{l>h_0-1} \; \sum_{k=-\infty}^{+\infty} \left( \frac{F_\lambda^{(L,\delta)}(k+l) - F_\lambda^{(L,\delta)}(k)}{l} \right)^2 + \varepsilon.
\end{multline}
\end{lemma}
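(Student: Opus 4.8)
The idea is to pass, in two stages, from the continuous double integral over $[h_0,\infty)\times\mathbb R$ to the discrete double sum over integer shifts $l>h_0-1$ and integer points $k$, controlling the error by $\varepsilon$ at the cost of enlarging $h_0$. Throughout we fix $\lambda\in\mathbb Y_n(K,\delta)$, so that $F_\lambda$ is supported in $|t|\le 2\sqrt n+Kn^\delta$ and is Lipschitz with constant $2$.

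\emph{Step 1: discretize the outer variable $h$.} Write the $h$-integral as $\sum_{l\ge \lfloor h_0\rfloor}\int_l^{l+1}(\cdots)\,dh$. For $h\in[l,l+1]$ one compares $\frac{F_\lambda(t+h)-F_\lambda(t)}{h}$ with $\frac{F_\lambda(t+l)-F_\lambda(t)}{l}$: the denominators differ by at most a factor $1+1/l\le 1+1/(h_0-1)$, and the numerators differ by $F_\lambda(t+h)-F_\lambda(t+l)$, which is bounded by $2|h-l|\le 2$ by the Lipschitz estimate. Since $F_\lambda$ has compact support of length $O(\sqrt n+n^\delta)$, the $t$-integral of the square of this numerator error contributes at most $C(\sqrt n+n^\delta)/l^2$, and summing over $l>h_0-1$ gives a contribution $\le C\sqrt n/(h_0-1)$ after dividing by $\sqrt n$ — which is $\le\varepsilon/2$ once $h_0$ is large. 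The multiplicative factor $1+1/(h_0-1)$ squared is absorbed into the constant $4$ on the right-hand side (it is $\le 2$ for $h_0$ large, leaving room to spare). This reduces the left side to $\frac{C}{\sqrt n}\sum_{l>h_0-1}\int_{-\infty}^\infty\left(\frac{F_\lambda(t+l)-F_\lambda(t)}{l}\right)^2 dt+\varepsilon/2$ with a controlled constant.

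\emph{Step 2: discretize the inner variable $t$ and truncate near the edge.} For each fixed integer $l$, split $\int_{-\infty}^\infty dt=\sum_{k}\int_k^{k+1}dt$ and compare $\frac{F_\lambda(t+l)-F_\lambda(t)}{l}$ for $t\in[k,k+1]$ with the value $\frac{F_\lambda(k+l)-F_\lambda(k)}{l}$ at the left endpoint; the difference of numerators is $\le 2|t-k|\cdot 2\le 4$ again by Lipschitz, but more usefully is $\le 4|t-k|$, so the square integrates over $[k,k+1]$ to $O(1/l^2)$ per unit of support, and summing over the $O(\sqrt n+n^\delta)$ relevant values of $k$ and over $l>h_0-1$ yields another term $\le\varepsilon/4$ after normalization. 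Finally one replaces $F_\lambda$ by $F_\lambda^{(L,\delta)}$: the two agree for $|t|\le 2\sqrt n-Ln^\delta$, and in the edge region $2\sqrt n-Ln^\delta\le|t|\le 2\sqrt n+Kn^\delta$ the bound $|F_\lambda(t)|\le C(K+L)n^\delta$ together with $1/l^2$ and the $O((K+L)n^\delta)$ length of that region gives a contribution $\le C(K+L)^2 n^{2\delta-1/2}/(h_0-1)$, which since $2\delta<1/2$ is again $\le\varepsilon/4$ for $h_0$ large (uniformly in $n$). Collecting the constants and being slightly generous with the factor in front yields exactly the stated inequality with the clean constant $4$.

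\emph{Main obstacle.} The only real subtlety is bookkeeping the constants so that everything fits under the single explicit factor $4$ on the right-hand side: each discretization step naturally produces a factor like $(1+1/(h_0-1))^2$, and one must check these multiply to something $<4$ for $h_0$ large while simultaneously the additive errors are all $\le\varepsilon$. This is harmless because all the error terms carry a factor $1/(h_0-1)$ (or a negative power of $n$), so choosing $h_0$ large — depending only on $\delta,K,L,\varepsilon$ and \emph{not} on $n$, which is the whole point of the lemma — suppresses them; the edge-truncation error is the one place where the constraint $\delta<1/4$ (hence $2\delta-1/2<0$) is genuinely used, and it is worth writing that estimate out carefully.
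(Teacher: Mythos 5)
Your two discretization steps (in $h$ and in $t$) are sound and essentially coincide with the first half of the paper's argument, and the bookkeeping of multiplicative factors under the constant $4$ is indeed harmless. The genuine gap is in the edge--truncation estimate. Writing $\check F_\lambda=F_\lambda-F_\lambda^{(L,\delta)}$, the quantity you must control is
$$
\frac{1}{\sqrt n}\sum_{l>h_0-1}\;\sum_{k}\frac{\bigl(\check F_\lambda(k+l)-\check F_\lambda(k)\bigr)^2}{l^2},
$$
where $\check F_\lambda$ is supported on two intervals of length at most $(K+L)n^{\delta}$ and satisfies $|\check F_\lambda|\le 2(K+L)n^{\delta}$. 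For a fixed $l$ the inner sum has about $(K+L)n^{\delta}$ nonzero terms, each as large as $C(K+L)^2 n^{2\delta}$ (you must square the size of $\check F_\lambda$, not multiply sup by length), so the uniform-in-$l$ bound your sketch uses gives $C(K+L)^3 n^{3\delta}$ per $l$, hence a total of order $(K+L)^3\,n^{3\delta-1/2}/(h_0-1)$ rather than your claimed $(K+L)^2\,n^{2\delta-1/2}/(h_0-1)$. Since the lemma must cover all $\delta<\tfrac14$ --- and in its application one needs precisely $\delta>\tfrac16$, where $3\delta-\tfrac12>0$ --- no choice of $h_0$ independent of $n$ makes this error $\le\varepsilon$, so the argument as written does not close.

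The missing idea, which is how the paper proceeds, is to split the $l$-summation at $l\sim n^{\delta}$. For $h_0\le l\le n^{\delta}$ one does not use the sup of $\check F_\lambda$ at all: on the $O(l+n^{\delta})$ indices $k$ for which $[k,k+l]$ meets the support of $\check F_\lambda$ one bounds $(F_\lambda(k+l)-F_\lambda(k))^2\le 4l^2$ by the Lipschitz property, so after dividing by $l^2$ the per-$l$ error is $O(l+n^{\delta})$ and the sum over this range is $O(n^{2\delta})$. For $l> n^{\delta}$ one uses $(a+b)^2\le 2a^2+2b^2$ (this is where the factor $4$ in the statement actually comes from) together with the sup bound $O((K+L)^3n^{3\delta})$ per $l$ and $\sum_{l>n^{\delta}}l^{-2}\lesssim n^{-\delta}$, again giving $O(n^{2\delta})$. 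After the normalization $1/\sqrt n$ the total truncation error is $C(K,L,\delta)\,n^{2\delta-1/2}$, and it is small because $2\delta<\tfrac12$ and $n$ is large --- not, as in your sketch, because it carries a factor $1/(h_0-1)$; only the discretization errors are suppressed by enlarging $h_0$. You should rewrite the truncation step along these lines.
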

We postpone its proof to the following subsection.

The second step is to estimate the expectation of the quantity
$$
\sum\limits_{l>h_0-1}\sum_{k=-\infty}^{+\infty} \left( \frac{F_\lambda^{(L,\delta)}(k+l) - F_\lambda^{(L,\delta)}(k)}{l} \right)^2
$$
with respect to the Plancherel measure.

We start with estimates for the poissonized Plancherel measure, the Bessel point process.
Take $\delta>1/6$ and let
$$
{\mathcal N}(n, \delta)=\{x\in {\mathbb Z}: |x|\leq 2\sqrt{n}-n^{\delta}\}.
$$

\begin{lemma} \label{varest}
For any $\delta> \frac{1}{6}$ there exist constants $C>0,\, \gamma>0, \, \varepsilon>0$ such that the following holds.

For any $l_0>1$ there exists $n_0>0$ such that for all $n> n_0$, and all $\theta$ satisfying
\[
\left| \frac{\theta}{\sqrt{n}} - 1 \right| < \varepsilon
\]
we have
\begin{equation}
\label{varlogest}
\frac{1}{\sqrt{n}}\, \sum_{l>l_0} \sum_{\substack{k\, \in \, \mathcal N(n,\delta)\\ k+l \,\in\, \mathcal N(n,\delta)}} \left| \frac{\Var_{J(\theta^2)} (c_k + \ldots c_{k+l-1})}{l^2} \right|
\;\le\;
\frac{C \log l_0}{l_0} \, e^{\gamma \left| \theta - \sqrt{n} \right|}.
\end{equation}
\end{lemma}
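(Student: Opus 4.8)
The plan is to reduce the estimate to the correlation bound for the Bessel process and then to perform the resulting double summation. First I would observe that, since $c_k + \ldots + c_{k+l-1}$ is a sum of $l$ indicator-type random variables, its variance expands as
\[
\Var_{\J(\theta^2)}(c_k + \ldots + c_{k+l-1}) = \sum_{i=0}^{l-1}\sum_{j=0}^{l-1} \mathrm{Cov}_{\J(\theta^2)}(k+i;k+j),
\]
where I abbreviate $\mathrm{Cov}_{\J(\theta^2)}(x;y) = \mathrm{Cov}_{\J(\theta^2)}(x,\vec{0};y,\vec{0})$. By Lemma \ref{correlbessel} (applied with $L=0$, so that all covariances are controlled once both indices lie in $N_n^{(\varepsilon)}$ for a suitable $\varepsilon$ determined by $\delta$, using that $|x| \le 2\sqrt n - n^\delta$ forces $|x|/\sqrt n < 2-\varepsilon$ once $n$ is large), each covariance is bounded by $C e^{\gamma|\theta-\sqrt n|}/(|i-j|+1)^2$. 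Hence
\[
\left| \Var_{\J(\theta^2)}(c_k + \ldots + c_{k+l-1}) \right| \le C e^{\gamma|\theta-\sqrt n|} \sum_{i=0}^{l-1}\sum_{j=0}^{l-1} \frac{1}{(|i-j|+1)^2} \le C' \, l \, e^{\gamma|\theta-\sqrt n|},
\]
since $\sum_{i}\sum_j (|i-j|+1)^{-2} \le l \sum_{m\in\mathbb Z}(|m|+1)^{-2} = O(l)$. Note the exponential factor $e^{\gamma|\theta-\sqrt n|}$ pulls out of every term uniformly, so it survives to the final bound as required.

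Next I would substitute this into the left-hand side of (\ref{varlogest}). For a fixed $l$, the inner sum over $k$ with $k, k+l \in \mathcal N(n,\delta)$ has at most $O(\sqrt n)$ terms, and each summand $|\Var_{\J(\theta^2)}(\cdot)|/l^2$ is at most $C' e^{\gamma|\theta-\sqrt n|}/l$. Therefore
\[
\frac{1}{\sqrt n} \sum_{l>l_0} \sum_{\substack{k\in\mathcal N(n,\delta)\\ k+l\in\mathcal N(n,\delta)}} \frac{|\Var_{\J(\theta^2)}(c_k+\ldots+c_{k+l-1})|}{l^2} \le \frac{1}{\sqrt n} \sum_{l>l_0} \frac{C'' \sqrt n \, e^{\gamma|\theta-\sqrt n|}}{l} \cdot \frac{1}{1}.
\]
This is the step where I must be careful: the naive bound $\sum_{l>l_0} 1/l$ diverges, so I cannot simply sum $1/l$ over all $l$ up to $O(\sqrt n)$ — I need the partial-summation refinement that the range of $l$ is effectively cut off. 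The cutoff comes from the geometry of $\mathcal N(n,\delta)$: since $F_\lambda^{(L,\delta)}$ (and, after depoissonization, the configuration) is supported in an interval of length $O(\sqrt n)$, the number of valid pairs $(k,l)$ is at most $O(\sqrt n \cdot \min(l, \sqrt n))$ in a way that, combined with the $1/l$ per-term bound, yields $\sum_{l_0 < l \lesssim \sqrt n} \sqrt n/l / \sqrt n = \sum_{l_0 < l \lesssim \sqrt n} 1/l = O(\log(\sqrt n/l_0))$. That is still not quite $\log l_0 / l_0$; the genuine gain to $(\log l_0)/l_0$ must come from a sharper use of the decay, namely from summing $\Var/l^2 \sim 1/l$ against the fact that for the pattern-counting quantity $(F(k+l)-F(k))^2/l^2$ the relevant contributions beyond scale $l_0$ are themselves damped.

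The main obstacle, then, is precisely extracting the factor $(\log l_0)/l_0$ rather than merely $\log l_0$ or $O(1)$. I would handle this by not bounding $\Var$ by $O(l) e^{\gamma|\theta-\sqrt n|}$ crudely, but instead keeping the covariance sum and exchanging the order of summation: write the left side as $\frac{1}{\sqrt n} e^{\gamma|\theta-\sqrt n|} \sum_{k}\sum_{l>l_0} \frac{1}{l^2}\sum_{0\le i,j <l} \frac{C}{(|i-j|+1)^2}$, then for fixed displacement $m = i-j$ collect the geometric weight and sum $\sum_{l > \max(l_0,|m|)} (l-|m|)/l^2 = O\big(\log(1+\tfrac{1}{l_0}) + \tfrac{1}{l_0}\big)$-type tails; the $1/(|m|+1)^2$ factor then makes the $m$-sum converge and, crucially, the $l$-tail starting at $l_0$ contributes $O(1/l_0)$ per $m$ while the logarithm enters only from the finitely many small $m \le l_0$, giving the advertised $C\log l_0/l_0$. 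With the $k$-sum contributing exactly one factor of $\sqrt n$ to cancel the prefactor $1/\sqrt n$, this produces (\ref{varlogest}). The constants $C, \gamma, \varepsilon$ depend only on $\delta$ because they come from Lemma \ref{correlbessel} with the choice of $\varepsilon$ forced by $\delta > 1/6$, and $n_0$ absorbs the requirement that $\mathcal N(n,\delta) \subset N_n^{(\varepsilon)}$.
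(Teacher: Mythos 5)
Your argument has a genuine gap, and it is exactly at the point you flagged. Bounding each covariance termwise via Lemma \ref{correlbessel} and taking absolute values can never give more than $\left|\Var_{\mathcal J(\theta^2)}(c_k+\dots+c_{k+l-1})\right|\le C\,l\,e^{\gamma|\theta-\sqrt n|}$: the diagonal terms alone contribute order $l$, since $\mathcal J(x,x;\theta^2)(1-\mathcal J(x,x;\theta^2))$ is bounded away from $0$ in the bulk. With a variance of order $l$ the sum over $l$ behaves like $\sum_{l_0<l\lesssim\sqrt n}1/l\approx\log(\sqrt n/l_0)$, which is not uniform in $n$ and in particular is not $O(\log l_0/l_0)$. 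Your attempted repair by exchanging the order of summation does not change this: for fixed displacement $m$ the tail $\sum_{l>\max(l_0,|m|)}(l-|m|)/l^2$ is \emph{not} $O(1/l_0)$ --- for $l\ge 2|m|$ each term is at least $1/(2l)$, so up to the effective cutoff $l\lesssim\sqrt n$ this tail is again of size $\log(\sqrt n/\max(l_0,|m|))$, and the claimed bound is simply false. The true variance of the particle number in an interval of length $l$ is of order $\log l$, not $l$, and this gain comes from a cancellation between the (positive) diagonal and the (negative) off-diagonal covariances that is invisible once you take absolute values term by term. The paper captures it through the reproducing identity $\mathcal J(x,x;\theta^2)=\sum_{y\in\mathbb Z}\bigl(\mathcal J(x,y;\theta^2)\bigr)^2$, which converts the variance into the boundary sum $\sum_{x\in[k,k+l]}\sum_{y\notin[k,k+l]}\bigl(\mathcal J(x,y;\theta^2)\bigr)^2$ (then summed in $k$ and $l$ against the weight $\psi(l_0,r)$ of Corollary \ref{varform}); with $|\mathcal J(x,y)|\lesssim 1/|x-y|$ this is $O(\log l)$ in the bulk, and $\sum_{l>l_0}\log l/l^2$ produces the advertised $\log l_0/l_0$. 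This identity (or some equivalent exploitation of the projection structure of the kernel) is the missing idea, and without it the stated bound cannot be reached.

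A second, independent problem is your claim that $|x|\le 2\sqrt n-n^\delta$ forces $|x|/\sqrt n<2-\varepsilon$ for large $n$, so that Lemma \ref{correlbessel} applies throughout $\mathcal N(n,\delta)$. This is backwards: since $\delta<1/2$, we have $n^\delta/\sqrt n\to 0$, so points of $\mathcal N(n,\delta)$ come arbitrarily close (on the scale $\sqrt n$) to the edge $2\sqrt n$ and leave every fixed bulk region $N_n^{(\varepsilon)}$. The bulk (Debye) estimate therefore cannot be used near the edge, and the hypothesis $\delta>1/6$ enters precisely there: the paper splits the $(x,y)$-sum into a bulk region plus several edge and beyond-the-edge regimes (its Cases 2--7), using the contour-integral bounds of Lemma \ref{bescontour}, Corollaries \ref{bescan} and \ref{bestriv}, and Propositions \ref{onesix} and \ref{expest}. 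Your proposal contains no substitute for this edge analysis, so even the covariance input you rely on is not justified on the full range of summation.
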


Lemma \ref{varest} is again essentially a straightforward computation using simple
estimates on the discrete Bessel kernel. We prove Lemma \ref{varest} in the following subsection.
Now we conclude the proof of Lemma \ref{tailh}.

Rewrite formula (\ref{varlogest}) as follows
\begin{multline} \label{cjest}
\frac{1}{\sqrt{n}} \;\sum_{l>l_0} \sum_{\substack{k \,\in \, \mathcal N (n,\delta)\\ k+l \,\in\, \mathcal N(n,\delta)}}
\left| \frac{1}{l} ( \mathbb E_{J(\theta^2)} \left( \sum_{r=k}^{k+l-1} (c_r - J(r,r;\, \theta^2))\right)^2 \right|
\le \\
\le \frac{C\log l_0}{l_0} \,e^{\gamma \left| \theta - \sqrt{n} \right|}.
\end{multline}
Now write
\begin{multline} \label{FBessel}
F_\lambda (k+1) - F_\lambda(k) = 1 - 2c_k(\lambda) - \sqrt{n} \left( \Omega \left( \frac{k+1}{\sqrt{n}} \right) - \Omega \left( \frac{k}{\sqrt{n}} \right) \right) =\\= 2 \left( \frac{\arccos{\frac{k}{2\sqrt{n}}}}{\pi} - c_k(\lambda) \right) + \frac{2}{\pi} \arcsin\left( \frac{k}{2\sqrt{n}} \right) - \sqrt{n} \left( \Omega \left( \frac{k+1}{\sqrt{n}} \right) - \Omega \left( \frac{k}{\sqrt{n}} \right) \right)=\\=
2\left( J(k,k; \, \theta^2) - c_k (\lambda) \right) + 2\left( \frac{\arccos \frac{k}{2\sqrt{n}}}{\pi} - J(k,k; \, \theta^2)\right)+\\
+
\left( \, \frac{2}{\pi} \arcsin\left( \frac{k}{2\sqrt{n}} \right) - \sqrt{n} \left( \Omega \left( \frac{k+1}{\sqrt{n}} \right) - \Omega \left( \frac{k}{\sqrt{n}} \right) \right) \right).
\end{multline}
From the Taylor formula applied to the function $\Omega$ we have, for $\left| k \right| < 2 \sqrt{n}$, the estimate
\begin{equation} \label{omarc}
\left| \sqrt{n} \left( \Omega \left( \frac{k+1}{\sqrt{n}} \right) - \Omega \left( \frac{k}{\sqrt{n}} \right) \right)-
\frac{2}{\pi} \arcsin\left( \frac{k}{2\sqrt{n}} \right) \right| \le \frac{10}{\sqrt{4n-k^2}}.
\end{equation}
To estimate the quantity
\[
\left| J(k,k; \, \theta^2) - \frac{\arccos \frac{k}{2\sqrt{n}}}{\pi} \right|
\]
we use the following Lemma.

\begin{lemma} \label{besselmain}
There exists $\varepsilon_0>0$ such that the following holds. For any $\delta_0 > \frac{1}{6}$ there exist constants $C>0,\, \gamma>0$ such that for all $n \in \mathbb N$, all $x \in \mathbb Z$ satisfying $\left| x\right| \le 2 \sqrt{n} - n^{\delta_0}$ and all $\theta \in \mathbb C$ satisfying
\[
\left| \frac{\theta}{\sqrt{n}} - 1 \right| < \varepsilon_0
\]
we have
\[
\left| J(x,x; \, \theta^2) - \frac{1}{\pi} \arccos \frac{x}{2\sqrt{n}} \right| \le
\frac{C}{2\sqrt{n} - \left| x \right|} \,e^{\gamma \left| \theta - \sqrt{n} \right| }.
\]
\end{lemma}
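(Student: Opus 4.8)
\noindent
The plan is to work with the Christoffel--Darboux representation of the diagonal of the discrete Bessel kernel,
\[
\J(\theta^2;x,x)=\sum_{j\ge 1}J_{x+j}(2\theta)^2=\sum_{m>x}J_m(2\theta)^2 ,
\]
obtained by summing over $j\ge 1$ the telescoping identity $W_{j-1}-W_j=\tfrac{x-y}{\theta}J_{x+j}(2\theta)J_{y+j}(2\theta)$, with $W_j=J_{x+j}(2\theta)J_{y+j+1}(2\theta)-J_{x+j+1}(2\theta)J_{y+j}(2\theta)$ (a direct consequence of the recurrence $J_{\nu-1}(2\theta)+J_{\nu+1}(2\theta)=\tfrac{\nu}{\theta}J_\nu(2\theta)$), and specializing to the diagonal $y=x$; the identity extends to complex $\theta$ by analyticity and underlies Okounkov's contour-integral form of the kernel. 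For $|x|\le(2-\varepsilon_1)\sqrt n$ the assertion is already contained in the $l=0$ instance of $(\ref{jarccos})$, since $\tfrac1{\sqrt n}\le\tfrac{2}{2\sqrt n-|x|}$ and $\arccos(x/2\theta)$ differs from $\arccos(x/2\sqrt n)$ by $O\bigl(e^{\gamma|\theta-\sqrt n|}/\sqrt n\bigr)$ there; so it suffices to treat $x$ with $(2-\varepsilon_1)\sqrt n\le x\le 2\sqrt n-n^{\delta_0}$ (the case of $x$ near $-2\sqrt n$ reduces to this via $J_{-m}=(-1)^mJ_m$). Fixing a bulk anchor $x_0=\lfloor(2-\varepsilon_1)\sqrt n\rfloor$ and writing
\[
\J(\theta^2;x,x)=\J(\theta^2;x_0,x_0)-\sum_{x_0<m\le x}J_m(2\theta)^2 ,
\]
the first term lies within $O\bigl(e^{\gamma|\theta-\sqrt n|}/\sqrt n\bigr)\le O\bigl(e^{\gamma|\theta-\sqrt n|}/(2\sqrt n-|x|)\bigr)$ of $\tfrac1\pi\arccos(x_0/2\sqrt n)$, so the whole matter reduces to the sum over $x_0<m\le x$.

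For that sum I would use the Debye asymptotics $(\ref{debyewatson})$. The decisive point is that for every $m$ with $x_0<m\le x$ one has $2\theta-m\ge 2\theta-x\ge n^{\delta_0}$, and since $\delta_0>1/6$ this is $\gg\theta^{1/3}$: the entire range stays a positive power of $n$ away from the turning point $m\approx 2\theta$, so $(\ref{debyewatson})$ applies uniformly with all error terms controlled. Writing $\cos\beta_m=m/2\theta$, the expansion gives $J_m(2\theta)^2=\dfrac{1+\cos\bigl(2\Phi_m(\theta)\bigr)}{2\pi\theta\sin\beta_m}+R_m$, where $\Phi_m(\theta)$ is the Debye phase and $R_m$ collects the terms of relative order $\theta^{-1}$ and smaller, so that $|R_m|\le C\,\theta^{-2}\sin^{-4}\beta_m\,e^{\gamma|\theta-\sqrt n|}$ near the edge (the first Debye coefficient $\alpha_1$ grows like $\sin^{-3}\beta_m$ there). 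Three estimates then close the argument. (i) Summing the ``mean'' part $\tfrac1{2\pi\theta\sin\beta_m}$ against the telescoping increments $\tfrac1\pi\bigl(\arccos\tfrac{m-1}{2\theta}-\arccos\tfrac m{2\theta}\bigr)$ by Euler--Maclaurin reproduces $\tfrac1\pi\bigl(\arccos\tfrac{x_0}{2\theta}-\arccos\tfrac x{2\theta}\bigr)$ with an error controlled by the total variation of $m\mapsto\tfrac1{\theta\sin\beta_m}$ over the range, which is $O\bigl(\theta^{-1/2}(2\theta-x)^{-1/2}\bigr)=O\bigl((2\theta-x)^{-1}\bigr)$. (ii) The oscillatory part $\sum_{x_0<m\le x}\tfrac{\cos(2\Phi_m(\theta))}{2\pi\theta\sin\beta_m}$ is handled by Abel summation: the phase increments $\Phi_{m+1}-\Phi_m$ are $\asymp\beta_m\gtrsim\sqrt{(2\theta-m)/\theta}$, so the sum is dominated by its boundary term at $m=x$, of size $O\bigl(\theta^{-1}\sin^{-2}\beta_x\bigr)=O\bigl((2\theta-x)^{-1}\bigr)$. (iii) The remainder $\sum_{x_0<m\le x}R_m$ is dominated near the edge by $\tfrac{C}{\theta^2}\int_{x_0}^{x}\tfrac{dm}{\sin^4\beta_m}=O\bigl(\theta^{-1}\sin^{-2}\beta_x\bigr)=O\bigl((2\theta-x)^{-1}\bigr)$. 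The dependence on $\theta$ in the range $|\theta/\sqrt n-1|<\varepsilon_0$ is tracked just as in the derivation of $(\ref{jxy})$--$(\ref{jxx})$, producing the factor $e^{\gamma|\theta-\sqrt n|}$, and one further such factor allows the passage from $\arccos(x/2\theta)$ to $\arccos(x/2\sqrt n)$. Combining (i)--(iii) with the anchor term yields the Lemma.

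The main obstacle is the uniform control of the Debye remainder up to distance $n^{\delta_0}$ from the edge --- verifying that the successive $\sin^{-k}\beta_m$ tails of the Debye series, once summed over $x_0<m\le x$, all telescope down to the single rate $O\bigl((2\theta-x)^{-1}\bigr)$ rather than accumulating; it is exactly this bookkeeping that forces $\delta_0>1/6$, since a slower approach to the edge would push $m$ into the Airy zone where $(\ref{debyewatson})$ fails and would also make the $\sin^{-k}\beta_m$ sums overshoot the claimed rate. A cleaner route, and the one I would ultimately prefer, is to start instead from Okounkov's double contour integral for $\J(\theta^2;x,x)$ and apply the steepest descent method: the saddle points $z=e^{\pm i\beta}$ with $\cos\beta=x/2\theta$ stay separated from one another and from the turning point precisely when $\sin\beta\gg\theta^{-1/3}$, i.e.\ when $2\sqrt n-|x|\gg n^{1/6}$; the simple pole of the integrand at $zw=1$ sits on the saddle configuration, the residue collected when the contours are deformed produces the main term $\tfrac1\pi\arccos(x/2\theta)$, the first correction in the stationary-phase expansion is $O\bigl(\theta^{-1}\sin^{-2}\beta\bigr)=O\bigl((2\sqrt n-|x|)^{-1}\bigr)$, and the factor $e^{\gamma|\theta-\sqrt n|}$ comes out of the real part of the exponent along the deformed contours. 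This representation is in any case needed for the proof of Lemma~\ref{varest}, so developing it here is no waste.
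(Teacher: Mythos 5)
Your primary route (the identity $\mathcal J(\theta^2;x,x)=\sum_{m>x}J_m(2\theta)^2$, an anchor in the bulk via the $l=0$ case of (\ref{jarccos}), and then Debye asymptotics for the partial sum $\sum_{x_0<m\le x}J_m(2\theta)^2$) has a genuine gap exactly at the point you yourself flag as ``the main obstacle''. The expansion (\ref{debyewatson}) is stated, and is valid with uniform constants, only for orders in the bulk $|m|\le(2-\varepsilon)\sqrt n$, with constants depending on $\varepsilon$; your argument needs it, with quantitative control of the remainder of the form $|R_m|\le C\theta^{-2}\sin^{-4}\beta_m\,e^{\gamma|\theta-\sqrt n|}$, uniformly for $m$ up to $2\sqrt n-n^{\delta_0}$, i.e.\ in a region where the expansion degenerates. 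That uniform near-edge estimate is precisely the nontrivial analytic content of the Lemma, and in your write-up it is asserted, not proved; proving it requires either Olver-type uniform (Airy-region) asymptotics or a saddle-point analysis of the contour integral --- at which point one is doing the work of Lemma \ref{bescontour} anyway. A second, structural problem is the complex poissonization parameter: for $\theta$ with $|\theta/\sqrt n-1|<\varepsilon_0$ and $\varepsilon_0$ a fixed constant (e.g.\ $\theta$ real with $\theta<\sqrt n(1-n^{\delta_0-1/2})$), the turning point $2\theta$ lies \emph{inside} your summation range $x_0<m\le x\le 2\sqrt n-n^{\delta_0}$, so $\beta_m$ is no longer real, $\sin\beta_m$ can vanish or be small in modulus, and the decomposition into ``mean $+$ oscillatory $+$ remainder'' together with the Abel-summation cancellation in step (ii) (which presumes a genuinely oscillatory factor of controlled modulus and monotone phase increments $\asymp\beta_m$) does not apply as stated. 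Saying that the $\theta$-dependence is ``tracked just as in (\ref{jxy})--(\ref{jxx})'' does not cover this, since those bounds were derived only in the bulk, where the saddle geometry is uniform in $\theta$; near the edge the extra factor $e^{\gamma|\theta-\sqrt n|}$ does not by itself rescue the argument when $|\theta-\sqrt n|$ is of order $\log n$ or smaller, which is the regime the depoissonization lemma actually uses.

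Your fallback route is, in substance, the paper's proof: the paper starts from Okounkov's double contour integral (\ref{okbes}), deforms both contours through the saddles $e^{\pm i\varphi_u}$, picks up the residue that produces $\tfrac1\pi\arccos\frac{x}{2\sqrt n}$, exploits an odd/even cancellation on the crossed saddle segments, and bounds the correction by $C(2\sqrt n-x)^{-1}e^{\gamma|\theta-\sqrt n|}$, the factor $e^{\gamma|\theta-\sqrt n|}$ coming from splitting off $e^{(\theta-\sqrt n)(\cdots)}$ with the saddle geometry fixed by the real parameter $\sqrt n$ (which is how the complex-$\theta$ difficulty above is avoided). Two small corrections to your sketch of it: in the normalization (\ref{okbes}) the relevant pole is at $z=w$, not $zw=1$; and the residue/deformation step producing the arccos term, as well as the exponential smallness of the circular arcs, still has to be carried out, so as written this remains an outline rather than a proof.
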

Observe that it suffices to prove Lemma \ref{besselmain} for $x>0$, as the other case follows by symmetry. Lemma \ref{besselmain} is again a relatively straightforward estimate using Okounkov's contour integral representation for the discrete Bessel kernel. For the reader's convenience, we give the proof in the last Section.

Using Lemma \ref{besselmain}, we obtain from (\ref{FBessel}), (\ref{omarc}) the estimate
\[
\left| \left( F_\lambda(k+1) - F_\lambda (k) \right) - 2 \left( J(k,k;\, \theta^2) - c_k(\lambda) \right) \right| \;\le\;
\frac{C}{2\sqrt{n} - \left| k \right|} \,e^{\gamma \left| \theta - \sqrt{n} \right|}.
\]
Observe now the following simple inequality
\begin{equation} \label{minusx}
\frac{1}{\sqrt{n}} \;\sum_{l>l_0} \sum\limits_{\substack{k \,\in \, \mathcal N (n,\delta)\\ k+l \,\in\, \mathcal N(n,\delta)}}
\left( \frac{\sum\limits_{x=k}^{k+l} \; \frac{1}{2\sqrt{n} - \left| x\right|} }{l} \right)^2 \;\le\; \frac{C \log^2 l_0}{l_0}.
\end{equation}
From (\ref{cjest}) and (\ref{minusx}) we now obtain
\[
\frac{1}{\sqrt{n}} \;\sum_{l>l_0} \sum\limits_{\substack{k \,\in \, \mathcal N (n,\delta)\\ k+l \,\in\, \mathcal N(n,\delta)}}
\left|\, \mathbb E_{J(\theta^2)} \left(
\frac{F_\lambda(k+l) - F_\lambda(k)}{l} \right)^2\, \right| \le
\frac{C \log^2 l_0}{l_0}\, e^{\gamma \left| \theta - \sqrt{n} \right|}.
\]
Depoissonizing, we have
\begin{equation}
\label{planlog}
\frac{1}{\sqrt{n}} \mathbb \, E_{{\mathbb Pl}^{(n)}} \left(
 \;\sum_{l>l_0} \sum\limits_{\substack{k \,\in \, \mathcal N (n,\delta)\\ k+l \,\in\, \mathcal N(n,\delta)}}
\left( \frac{F_\lambda(k+l) - F_\lambda(k)}{l} \right)^2\, \right) \le
\frac{C \log^2 l_0}{l_0}.
\end{equation}

The estimate (\ref{planlog}), together with Lemma \ref{discrcut}, completes the proof of Lemma \ref{tailh}.

\subsection{Proof of Lemma \ref{discrcut}}
The first step is to pass from integrals in $t$ and in $h$ to sums in $k$ and $l$. Since the function $F_\lambda$ is Lipschitz with the Lipschitz constant $2$, for any $t \in \mathbb R, \, h \in \mathbb R_+$ we have
\[
\left( F_\lambda (t+h) - F_\lambda (t) \right)^2 \le 2 \left( F_\lambda (k+l) - F_\lambda(k) \right)^2 + 16,
\]
where $k=[t], \, l=[h]$. Integrating in $t$, we have
\[
\int\limits_{-\infty}^{+\infty} \left( F_\lambda (t+h) - F_\lambda (t) \right)^2 \, dt \le 2 \sum\limits_{k=-\infty}^{+\infty} \left( F_\lambda (k+l) - F_\lambda(k) \right)^2 + 40 \sqrt{n},
\]
where, as before $l = [h]$.

\noindent
Now, integrating in $h$ from $l$ to $l+1$, we arrive at the inequality
\[
\int\limits_{l}^{l+1} \int\limits_{-\infty}^{+\infty} \left( F_\lambda(t+h) - F_\lambda(t) \right)^2 dt\, dh \le 2 \sum\limits_{k=-\infty}^{+\infty} \left( F_\lambda (k+l) - F_\lambda(k) \right)^2 + 40\sqrt{n},
\]
whence, for any $h_0>1$ we have
\begin{multline}
\frac{1}{\sqrt{n}} \int\limits_{h_0}^{+\infty} \int\limits_{-\infty}^{+\infty} \left( \frac{F_\lambda (t+h) - F_\lambda (t)}{h} \right)^2 dt\, dh \le\\
\le \frac{2}{\sqrt{n}} \, \sum\limits_{l>h_0-1} \; \sum_{k=-\infty}^{+\infty} \left( \frac{F_\lambda (k+l) - F_\lambda (k)}{l} \right)^2 + \; 40\cdot \sum\limits_{l>h_0-1} \frac{1}{l^2}.
\end{multline}
For $h_0>1$, we thus arrive at the inequality
\begin{multline}
\frac{1}{\sqrt{n}} \int\limits_{h_0}^{+\infty} \int\limits_{-\infty}^{+\infty} \left( \frac{F_\lambda (t+h) - F_\lambda (t)}{h} \right)^2 dt\, dh \le\\
\le \frac{2}{\sqrt{n}} \, \sum\limits_{l>h_0-1} \; \sum_{k=-\infty}^{+\infty} \left( \frac{F_\lambda (k+l) - F_\lambda (k)}{l} \right)^2 + \; 40 \left( \frac{1}{h_0-1} + \frac{1}{(h_0-1)^2} \right),
\end{multline}
which concludes the first step of the argument.

To prove Lemma \ref{discrcut}, it suffices now to establish the following Lemma.

\begin{lemma} For any $\delta$ satisfying $0<\delta<\frac{1}{4}$, any $K,L >0$ there exists a positive constant $C(K,L,\delta)$ such that for any $n \in \mathbb N$ any $\lambda \in \mathbb Y_n (K,\delta)$ and any $h \ge 1$ we have:
\begin{multline*}
\frac{2}{\sqrt{n}} \, \sum\limits_{l\ge h} \; \sum_{k=-\infty}^{+\infty} \left( \frac{F_\lambda (k+l) - F_\lambda (k)}{l} \right)^2 \le\\
\le \frac{2}{\sqrt{n}} \, \sum\limits_{l\ge h} \; \sum_{k=-\infty}^{+\infty} \left( \frac{F_\lambda^{(L,\delta)} (k+l) - F_\lambda^{(L,\delta)} (k)}{l} \right)^2
+C(K,L,\delta) \cdot n^{2\delta - \frac{1}{2}}.
\end{multline*}
\end{lemma}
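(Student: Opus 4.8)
The plan is to prove the inequality by comparing the two double sums term by term. Write $B=2\sqrt{n}+Kn^\delta$, so that for $\lambda\in\mathbb Y_n(K,\delta)$ one has $F_\lambda(t)=0$ for $|t|>B$; since $F_\lambda$ is Lipschitz with constant $2$, this yields two elementary deterministic bounds that will do all the work: $|F_\lambda(t)|\le 2(B-|t|)$ for $|t|\le B$, and $|F_\lambda(t+l)-F_\lambda(t)|\le 2l$ for all $l\ge 0$. The functions $F_\lambda$ and $F_\lambda^{(L,\delta)}$ coincide on the bulk $\{|t|\le 2\sqrt{n}-Ln^\delta\}$ and differ only on the two edge strips $E_\pm=\{t:\pm t\in(2\sqrt{n}-Ln^\delta,\,B]\}$, each of width $(K+L)n^\delta$; on $E_+\cup E_-$ the first bound gives $|F_\lambda|\le 2(K+L)n^\delta$. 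It therefore suffices to bound
\[
\Delta_\lambda:=\sum_{l\ge h}\sum_{k}\frac{1}{l^2}\Big[\big(F_\lambda(k+l)-F_\lambda(k)\big)^2-\big(F_\lambda^{(L,\delta)}(k+l)-F_\lambda^{(L,\delta)}(k)\big)^2\Big]
\]
from above by $C(K,L,\delta)n^{2\delta}$, since the asserted inequality is exactly $\frac{2}{\sqrt{n}}\Delta_\lambda\le C(K,L,\delta)n^{2\delta-1/2}$.

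A term of $\Delta_\lambda$ vanishes unless at least one of $k,\,k+l$ lies in $E_+\cup E_-$, and I would split the surviving pairs into three families. \textbf{(i)} Both of $k$ and $k+l$ lie in the same strip; then $l\le (K+L)n^\delta$, and since $F_\lambda^{(L,\delta)}$ vanishes on each strip the bracket equals $(F_\lambda(k+l)-F_\lambda(k))^2\le 4l^2$, so each term is at most $4$; there are $O(n^{2\delta})$ such pairs, contributing $O(n^{2\delta})$. \textbf{(ii)} $k$ lies in one strip and $k+l$ in the other; then $l\ge 4\sqrt{n}-2Ln^\delta$, the bracket is at most $(|F_\lambda(k+l)|+|F_\lambda(k)|)^2\le 16(K+L)^2n^{2\delta}$, $l^{-2}=O(n^{-1})$, and there are $O(n^{2\delta})$ such pairs, contributing $O(n^{4\delta-1})=o(1)$. \textbf{(iii)} Exactly one of $k,\,k+l$ lies in a strip; say $k+l=:m\in E_+$ while $|k|\le 2\sqrt{n}-Ln^\delta$ (the remaining sub-cases, including those where one index lies past the support, are handled by the same or simpler estimates). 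Here $F_\lambda^{(L,\delta)}(k)=F_\lambda(k)$ and $F_\lambda^{(L,\delta)}(m)=0$, so the bracket equals $F_\lambda(m)^2-2F_\lambda(k)F_\lambda(m)$. The \emph{cross term} cannot be estimated by absolute values alone, because $F_\lambda(k)$ at a bulk index can be of order $\sqrt{n}$ (already for the $\sqrt{n}\times\sqrt{n}$ square); instead I would use the Lipschitz bound $|F_\lambda(k)|\le|F_\lambda(m)|+2l$, whence
\[
\Big|F_\lambda(m)^2-2F_\lambda(k)F_\lambda(m)\Big|\le|F_\lambda(m)|\big(|F_\lambda(m)|+4l\big)\le 2(K+L)n^\delta\big(2(K+L)n^\delta+4l\big).
\]
Dividing by $l^2$, a family-(iii) term is $\le C(K,L)\big(n^{2\delta}l^{-2}+n^\delta l^{-1}\big)$; for each fixed $l$ there are at most $\min\big(l,(K+L)n^\delta\big)$ admissible $k$, and $l$ runs up to $O(\sqrt{n})$, so summing the two pieces yields a bound of order $n^{2\delta}$, up to a factor $\log n$ produced by the small-$l$ pairs straddling the bulk/strip interface. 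Collecting the three families gives $\Delta_\lambda\le C(K,L)n^{2\delta}\log n$, hence $\frac{2}{\sqrt{n}}\Delta_\lambda\le C(K,L,\delta)n^{2\delta-1/2}\log n$; this is $o(1)$ for $\delta<\tfrac14$ and in particular suffices for the reduction to Lemma \ref{discrcut}, the logarithm being immaterial in all subsequent uses (and removable by slightly more careful bookkeeping).

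The main obstacle is precisely family (iii) and its cross term: a pointwise bound of the shape $|F_\lambda(k)|\le 2(K+L)n^\delta$ is false at deep bulk sites, so one must tie $F_\lambda(k)$ to $F_\lambda(m)$. The key (and only) idea needed is the geometric observation that when $m=k+l$ sits in a strip and $k$ sits in the bulk, the gap $l$ is at least the distance from $k$ to that strip, so the Lipschitz estimate $|F_\lambda(k)-F_\lambda(m)|\le 2l$ already controls $F_\lambda(k)$ in terms of $F_\lambda(m)$ and $l$ — after which everything reduces to summing geometric-type series. I would present this as the separate Lemma already isolated in the text (thereby completing the proof of Lemma \ref{discrcut}), carry out the case split above in detail, and append the symmetric estimates for $E_-$ and for the negative part of the bulk.
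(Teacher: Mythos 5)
Your decomposition is correct in substance but takes a genuinely different route from the paper. The paper does not compare the two double sums term by term; it splits according to the size of $l$. For $l\le n^\delta$ it observes that the summands in the two sums differ only for the at most $2\bigl(2l+(K+L)n^\delta\bigr)$ indices $k$ for which $[k,k+l]$ meets the support of the correction $\check F_\lambda^{(L,\delta)}=F_\lambda-F_\lambda^{(L,\delta)}$, and on those it simply uses $\bigl(F_\lambda(k+l)-F_\lambda(k)\bigr)^2\le 4l^2$ (no cross term at all) -- see (\ref{lsmall}), (\ref{ltondelta}); for $l>n^\delta$ it uses $(a+b)^2\le 2a^2+2b^2$ together with the bound $\sum_k\bigl(\check F_\lambda^{(L,\delta)}(k+l)-\check F_\lambda^{(L,\delta)}(k)\bigr)^2\le 64(K+L)^3n^{3\delta}$, accepting a factor $2$ in front of the main sum -- see (\ref{lbig}), (\ref{ltoinfty}) -- which is exactly why Lemma \ref{discrcut} carries $\frac{4}{\sqrt n}$ on its right-hand side. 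Your alternative -- bounding the difference of the two sums directly, with the case split by location of $k$, $k+l$ relative to the edge strips and the Lipschitz tie $|F_\lambda(k)|\le|F_\lambda(m)|+2l$ to control the cross term -- is a valid and correctly executed idea; in particular your observation that $|F_\lambda|$ can be of order $\sqrt n$ in the bulk even for $\lambda\in\mathbb Y_n(K,\delta)$, so that absolute-value bounds alone fail, is exactly right, and cases (i) and (ii) are fine.

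The one caveat is the $\log n$ you acknowledge. As written, your argument proves the statement only with error $C(K,L,\delta)\,n^{2\delta-1/2}\log n$, and the claim that the logarithm is removable by ``slightly more careful bookkeeping'' is optimistic inside your framework: the per-term bound $|F_\lambda(m)|\bigl(|F_\lambda(m)|+4l\bigr)/l^2$, with up to $\sim n^\delta$ admissible $k$ for each $l$ and $l$ ranging over $n^\delta<l\le C\sqrt n$, genuinely produces $n^{2\delta}\log n$ (and sharpening the bracket to $\min\bigl(4l^2,\,4l|F_\lambda(m)|\bigr)$ still leaves a $\sum_l n^{2\delta}/l$ over that range). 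The natural repair is precisely the paper's device for $l>n^\delta$ -- trade the cross term for a factor $2$ on the main sum via the $\check F$ splitting -- at which point one proves the same variant the paper proves. Note that the paper's own proof also does not yield the literal statement (its right-hand sum for $l>n^\delta$ comes with coefficient $2$); both your version with the logarithm and the paper's with the factor $2$ are amply sufficient for Lemma \ref{discrcut}, since there one only needs the error term to be $o(1)$ for $\delta<\tfrac14$, and hence for Lemma \ref{tailh}. So: no missing idea, a different and workable decomposition, but if you present it you should either keep the $\log n$ explicitly (and note it is harmless downstream) or import the $\check F$/factor-$2$ step for large $l$ rather than assert the logarithm away.
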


The next step is to pass from $F_\lambda$ to $F_\lambda^{(L,\delta)}$. Assume $\lambda \in \mathbb Y_n (K,\delta)$. Denote
\[
\check F_\lambda^{(L,\delta)} (t) = F_\lambda (t) - F_\lambda^{(L,\delta)} (t).
\]
The support of $\check F_\lambda^{(L,\delta)}$ consists of two intervals, each of length at most $(K+L)\cdot n^\delta$. Write
\[
\check I_\lambda^{(L,\delta)} = \{ t\, : \, \check F_\lambda^{(L,\delta)} (t) \ne 0 \},
\]
and, for $l \in \mathbb N$, denote
\[
\check I_{\lambda, l}^{(L,\delta)} = \{ k \in \mathbb Z: [k,k+l] \cap \check I_\lambda^{(L,\delta)} \ne \emptyset \}.
\]
By definition, for the cardinality of $\check I_{\lambda, l}^{(L,\delta)}$ we have:
\[
\#\check I_{\lambda, l}^{(L,\delta)} \le 2(2l + (K+L)n^\delta),
\]
whence, using the clear inequality
\[
\left( F_\lambda (k+l) - F_\lambda (k) \right)^2 \le 4l^2,
\]
we arrive, for any $l \in \mathbb N$, at the inequality
\[
\sum\limits_{k \in \check I_{\lambda, l}^{(L,\delta)}} \left( F_\lambda (k+l) - F_\lambda (k) \right)^2 \le 8l^2 \left( 2l + (K+L)n^\delta \right),
\]
and, consequently, at the inequality
\begin{multline} \label{lsmall}
\sum\limits_{k=-\infty}^{+\infty} \left( F_\lambda (k+l) - F_\lambda (k) \right)^2 \le
\sum\limits_{k=-\infty}^{+\infty} \left( F_\lambda^{(L,\delta)} (k+l) - F_\lambda^{(L,\delta)} (k) \right)^2 +\\
+ 8l^2 \left( 2l + (K+L) n^\delta \right).
\end{multline}
Furthermore, again using the fact that $\check I_\lambda^{(L,\delta)}$ has measure at most $2(K+L)n^\delta$, and the Lipschitz property of $\check F_\lambda^{(L,\delta)}$ for any $l \in \mathbb N$ we have the inequality
\[
\sum\limits_{k=-\infty}^{+\infty} \left( \check F_\lambda^{(L,\delta)} (k+l) - \check F_\lambda^{(L,\delta)} (k) \right)^2 \le 64\cdot (K+L)^3 \cdot n^{3\delta}.
\]
Using the clear inequality
\begin{multline}
\sum\limits_{k=-\infty}^{+\infty} \left( F_\lambda (k+l) - F_\lambda (k) \right)^2 \le \\
\le 2 \sum\limits_{k=-\infty}^{+\infty} \left( F_\lambda^{(L,\delta)} (k+l) - F_\lambda^{(L,\delta)} (k) \right)^2 +
2 \sum\limits_{k=-\infty}^{+\infty} \left( \check F_\lambda^{(L,\delta)} (k+l) - \check F_\lambda^{(L,\delta)} (k) \right)^2
\end{multline}
we obtain, for any $l \in \mathbb N$, the inequality
\begin{multline} \label{lbig}
\sum\limits_{k=-\infty}^{+\infty} \left( F_\lambda (k+l) - F_\lambda (k) \right)^2 \le\\
\le 2 \sum\limits_{k=-\infty}^{+\infty} \left( F_\lambda^{(L,\delta)} (k+l) - F_\lambda^{(L,\delta)}(k) \right)^2 + 128 \cdot (K+L)^3 \cdot n^{3\delta}.
\end{multline}

We proceed to summing in $l$. First we sum in $l\in [h_0, n^\delta]$ using inequality (\ref{lsmall}), and then we sum in $l \in [n^\delta, +\infty)$ using inequality $(\ref{lbig})$. From (\ref{lsmall}) we immediately obtain
\begin{multline} \label{ltondelta}
\sum\limits_{h_0 \le l \le n^\delta} \sum\limits_{k=-\infty}^{\infty}
\left( \frac{F_\lambda (k+l) - F_\lambda (k)}{l} \right)^2\le\\
\le \sum\limits_{h_0 \le l \le n^\delta} \sum\limits_{k=-\infty}^{\infty}\left( \frac{F_\lambda^{(L,\delta)} (k+l) - F_\lambda^{(L,\delta)} (k)}{l} \right)^2 +
8\cdot (K+L+2)\cdot n^{2\delta}.
\end{multline}
From (\ref{lbig}) we have
\begin{multline} \label{ltoinfty}
\sum\limits_{l>n^\delta}\sum\limits_{k=-\infty}^{\infty} \left( \frac{F_\lambda (k+l) - F_\lambda (k)}{l} \right)^2 \le \\
\le 2 \sum\limits_{l>n^\delta}\sum\limits_{k=-\infty}^{\infty} \left( \frac{F_\lambda^{(L,\delta)} (k+l) - F_\lambda^{(L,\delta)}(k)}{l} \right)^2 +
+128\cdot (K+L)^3 \cdot \left( n^{2\delta} + n^\delta \right).
\end{multline}
Indeed, to prove (\ref{ltoinfty}), it suffices to observe that
\[
\sum\limits_{l>n^\delta} \frac{1}{l^2} < \frac{1}{n^{2\delta}} + \frac{1}{n^\delta}.
\]
Combining (\ref{ltondelta}) and (\ref{ltoinfty}), we conclude the proof of the Lemma.

\subsection{Average Variance of the Discrete Bessel Process.}

Let $l_0>1$ and for $r \in \mathbb N$ denote
\[
\psi (l_0,r) =
\begin{cases}
2 \left( \frac{1}{l_0} + \ldots + \frac{1}{r-1} + r \left( \sum\limits_{n=r}^{+\infty}\frac{1}{n^2}\right) \right), &\text{if} \quad r>l_0 \\
2r \left( \sum\limits_{n=l_0}^{+\infty} \frac{1}{n^2} \right), &\text{if} \quad r\leq l_0
\end{cases}
\]

\begin{proposition}
For any $\theta \in \mathbb C$ we have
\[
\sum\limits_{l=l_0}^{+\infty} \sum\limits_{k=-\infty}^{+\infty} \frac{\Var_{J(\theta^2)} (c_k + \ldots + c_{k+l-1})}{l^2} = \sum_{r=1}^{+\infty} \psi (l_0,r) \cdot
\left( \; \sum\limits_{\substack{x,y \in \mathbb Z \\ y-x=r}} \left( J(x,y; \theta^2) \right)^2 \right).
\]
\end{proposition}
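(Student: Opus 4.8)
The plan is to prove the identity first for real $\theta>0$, where $\Prob_{J(\theta^2)}$ is a bona fide determinantal probability measure and every quantity in sight is nonnegative, and then to extend it to all $\theta\in\mathbb C$ by analytic continuation. The structural input I would use is that for $\theta>0$ the discrete Bessel kernel $J(\theta^2)$ is a self-adjoint projection on $\ell_2(\mathbb Z)$; this is classical and follows, for instance, from the representation $J(x,y;\theta^2)=\sum_{k\geq 1}J_{x+k}(2\theta)J_{y+k}(2\theta)$ together with the Parseval identity $\sum_{m}J_m(2\theta)J_{m+n}(2\theta)=\delta_{n0}$. In particular $J(\theta^2)$ is real, symmetric, and satisfies $\sum_{z\in\mathbb Z}J(x,z;\theta^2)^2=J(x,x;\theta^2)$.

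First I would record the variance of a block. For a determinantal process with real symmetric kernel $K$ and a finite set $A\subset\mathbb Z$ one has
\[
\Var\Big(\sum_{j\in A}c_j\Big)=\mathrm{Tr}(K_A)-\mathrm{Tr}(K_A^2)=\sum_{x\in A}K(x,x)-\sum_{x,y\in A}K(x,y)^2 ;
\]
rewriting $\sum_{x\in A}K(x,x)=\sum_{x\in A}\sum_{z\in\mathbb Z}K(x,z)^2$ via the projection identity and cancelling the diagonal block gives $\Var(\sum_{j\in A}c_j)=\sum_{x\in A}\sum_{z\notin A}K(x,z)^2$. With $A=\{k,\dots,k+l-1\}$ and $K=J(\theta^2)$, $\theta>0$, this reads
\[
\Var_{J(\theta^2)}(c_k+\dots+c_{k+l-1})=\sum_{x=k}^{k+l-1}\ \sum_{z\notin[k,k+l-1]}J(x,z;\theta^2)^2 .
\]

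Next I would substitute this into the left-hand side of the Proposition and interchange the order of summation, which is legitimate because all terms are nonnegative (Tonelli). The coefficient of a fixed $J(x,z;\theta^2)^2$ ($x\ne z$, $r=|x-z|$) becomes $\sum_{l\geq l_0}N_l/l^2$, where $N_l$ is the number of integers $k$ with $x\in[k,k+l-1]$ and $z\notin[k,k+l-1]$. Taking $z=x+r$, the constraints amount to $x-l+1\leq k\leq\min(x,\,x+r-l)$, so $N_l=l$ if $l\leq r$ and $N_l=r$ if $l>r$, i.e.\ $N_l=\min(l,r)$ (the case $z<x$ is symmetric). Splitting $\sum_{l\geq l_0}\min(l,r)/l^2$ at $l=r$ when $r>l_0$, using $\min(l,r)=r$ throughout when $r\leq l_0$, and rebracketing the $l=r$ term, one identifies this coefficient with $\tfrac12\psi(l_0,r)$. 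Finally, grouping ordered pairs $(x,z)$ by $r=|x-z|\geq 1$, the two directions $z-x=\pm r$ contribute equally since $J(x,z;\theta^2)=J(z,x;\theta^2)$; the resulting factor $2$ cancels the $\tfrac12$ and yields $\sum_{r\geq 1}\psi(l_0,r)\sum_{z-x=r}J(x,z;\theta^2)^2$, as asserted. (Convergence for $\theta>0$: $\sum_{r\ne 0}\sum_x J(x,x+r;\theta^2)^2=\sum_x J(x,x;\theta^2)(1-J(x,x;\theta^2))<\infty$ by the rapid decay of $J(x,x;\theta^2)$ toward $0$ and toward $1$ at $\pm\infty$, and $\psi(l_0,r)=O(\log r)$ is dominated by the decay of $\sum_x J(x,x+r;\theta^2)^2$ in $r$; in any case the identity is an equality of sums of nonnegative terms, valid in $[0,+\infty]$.)

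For complex $\theta$ both sides are defined by analytic continuation, so it suffices to check that each is holomorphic---indeed entire---near $(0,+\infty)$: this follows from the super-exponential bound on $|J_m(2\theta)|$ furnished by the power series for $J_m$, uniform on compact subsets of the $\theta$-plane, which makes all the series above converge locally uniformly; two entire functions agreeing on $(0,+\infty)$ coincide on $\mathbb C$. I expect the main obstacle to be exactly this bookkeeping---making the interchange-of-summation and analytic-continuation steps airtight, and in particular matching $\sum_{l\geq l_0}\min(l,r)/l^2$ to the two-case definition of $\tfrac12\psi(l_0,r)$ without an off-by-one slip at $l=r$. The probabilistic content (the projection-kernel variance formula and the interval count) is routine.
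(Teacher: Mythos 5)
Your proof is correct and follows essentially the same route as the paper: the projection identity $J(x,x;\theta^2)=\sum_y J(x,y;\theta^2)^2$ turns the block variance into $\sum_{x\in A}\sum_{y\notin A}J(x,y;\theta^2)^2$, summation over $k$ produces the count $\min(l,r)$, and summing $\min(l,r)/l^2$ over $l\geq l_0$ yields $\tfrac12\psi(l_0,r)$, with the factor $2$ absorbed by the symmetry in $(x,y)$. Your additional care with Tonelli for $\theta>0$ and analytic continuation to complex $\theta$ is a reasonable tightening of a step the paper leaves implicit, not a different argument.
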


\begin{proof}
From the well-known identity
\[
J(x,x; \, \theta^2) = \sum\limits_{y \in \mathbb Z} \left( J(x,y; \theta^2) \right)^2
\]
we derive
\[
\Var_{J(\theta^2)} (c_k + \ldots + c_{k+l-1} ) = \sum\limits_{\substack{x \in \mathbb Z \\ x \in [k, k+l]}} \; \sum\limits_{\substack{ y \in \mathbb Z \\ y \notin [k,k+l]}} \; \left( J(x,y; \theta^2) \right)^2.
\]
Summing in $k$, we obtain
\[
\sum\limits_{k=-\infty}^{+\infty} \Var_{J(\theta^2)} (c_k + \ldots + c_{k+l-1}) = 2 \sum\limits_{\substack{x,y \in \mathbb Z \\ x<y}} \min (y-x, l) \cdot \left( J(x,y; \theta^2) \right)^2.
\]
Dividing by $l^2$ and summing in $l$ we obtain the Proposition.
\end{proof}
Recall that ${\mathcal N}(n, \delta)=\{x\in {\mathbb Z}: |x|\leq 2\sqrt{n}-n^{\delta}\}$.

Summing only over indices belonging to  $\mathcal N(n,\delta)$, we obtain
\begin{corollary} \label{varform}  For any $\delta>0,\, l>1$ we have
\[
\sum_{l>l_0} \sum_{\substack{k\in \mathcal N(n,\delta)\\ k+l \,\in\, \mathcal N(n,\delta)}} \left| \frac{\Var_{J(\theta^2)} (c_k + \ldots c_{k+l-1})}{l^2} \right|
\;\le\;  \sum\limits_{r=1}^{+\infty} \psi(l_0,r)
\; \cdot
\sum\limits_{\substack{ x \,\in\, \mathcal N(n,\delta), y\in {\mathbb Z} \\ \left| x-y \right| = r}} \left| J(x,y;\, \theta^2) \right|^2
\]
\end{corollary}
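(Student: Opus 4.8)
The plan is to read the corollary off from the \emph{pointwise} identity — valid before summing over $k$ — that was established in the course of proving the Proposition: $\Var_{J(\theta^2)}(c_k + \ldots + c_{k+l-1})$ equals the sum of $\bigl(J(x,y;\theta^2)\bigr)^2$ over all pairs $(x,y)$ with $x$ inside and $y$ outside the window $\{k,k+1,\ldots,k+l-1\}$. For complex $\theta$ the left-hand side is a formal analytic continuation and is in general complex, but the right-hand side is termwise dominated in modulus by a genuinely nonnegative quantity, since $|z^2|=|z|^2$ for $z\in\mathbb C$; thus
\[
\bigl|\Var_{J(\theta^2)}(c_k + \ldots + c_{k+l-1})\bigr| \;\le\; \sum_{x\in\{k,\ldots,k+l-1\}}\ \sum_{y\notin\{k,\ldots,k+l-1\}}\bigl|J(x,y;\theta^2)\bigr|^2 .
\]
I would substitute this into the left-hand side of the corollary, divide by $l^2$, and — everything now being nonnegative — interchange the order of summation so that the sum over the pair $(x,y)$ is carried out last.

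The next step exploits that ${\mathcal N}(n,\delta)=\{x\in\mathbb Z:|x|\le 2\sqrt n-n^\delta\}$ is a set of consecutive integers. Hence the two constraints $k\in{\mathcal N}(n,\delta)$ and $k+l\in{\mathcal N}(n,\delta)$ force the whole window $\{k,\ldots,k+l-1\}$ into ${\mathcal N}(n,\delta)$, so that every index $x$ occurring in the inner double sum already lies in ${\mathcal N}(n,\delta)$; in the interchanged sum the contributions with $x\notin{\mathcal N}(n,\delta)$ therefore vanish. Having extracted the restriction $x\in{\mathcal N}(n,\delta)$, I may then drop the constraints $k\in{\mathcal N}(n,\delta)$ and $k+l\in{\mathcal N}(n,\delta)$ from the counting of admissible $k$ — this only enlarges the count, hence the bound. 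What is then left is the purely combinatorial inequality
\[
\sum_{l>l_0}\frac{1}{l^2}\,\#\bigl\{k\in\mathbb Z:\ x\in\{k,\ldots,k+l-1\},\ y\notin\{k,\ldots,k+l-1\}\bigr\}\;\le\;\psi(l_0,|x-y|),
\]
to be checked for each ordered pair $(x,y)$ with $|x-y|\ge 1$.

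Finally I would verify this inequality by the same elementary count that produced the weight $\min(y-x,l)$ in the Proposition: for $y=x\pm r$ the cardinality on the left equals $\min(r,l)$, so the left-hand side is at most $\sum_{l>l_0}\min(r,l)/l^2\le 2\sum_{l\ge l_0}\min(r,l)/l^2=\psi(l_0,r)$. Summing the resulting termwise bound and regrouping according to the common value $r=|x-y|$ then reproduces exactly the right-hand side of the corollary. I do not anticipate a genuine obstacle here; the only points requiring care are the legitimacy of passing to absolute values inside the analytically continued variance — which is precisely why the nonnegative majorant $\sum|J(x,y;\theta^2)|^2$ is carried through every step — and the bookkeeping that upgrades the endpoint restriction $k,k+l\in{\mathcal N}(n,\delta)$ to the restriction $x\in{\mathcal N}(n,\delta)$ on all intermediate indices, which is what the interval structure of ${\mathcal N}(n,\delta)$ provides.
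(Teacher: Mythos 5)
Your proposal is correct and follows essentially the same route as the paper: the paper obtains the corollary by restricting the summation in the preceding Proposition (whose proof already contains the pointwise identity $\Var_{J(\theta^2)}(c_k+\ldots+c_{k+l-1})=\sum_{x\in W}\sum_{y\notin W}(J(x,y;\theta^2))^2$ and the count $\min(|x-y|,l)$ yielding the weight $\psi(l_0,r)=2\sum_{l\ge l_0}\min(r,l)/l^2$), and your termwise passage to absolute values, the observation that the interval structure of $\mathcal N(n,\delta)$ forces $x\in\mathcal N(n,\delta)$, and the dropping of the constraints on $k$ are precisely the details the paper leaves implicit in its one-line derivation.
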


\subsection{Estimates on the Bessel Kernel.}

To estimate the average variance of the discrete Bessel process using Corollary \ref{varform},
we need estimates for the discrete Bessel kernel for various values of the parameters.
We formulate these estimates in this subsection and postpone their routine proofs until the last Section.
Very simple  estimates on the Bessel kernel are quite sufficient for our purposes.

We start with the following estimate for the Bessel function.
\begin{lemma}
\label{bescontour}
There exists $\varepsilon_0 > 0$ such that the following holds.
For any $\delta > \frac{1}{6}$ there exist constants $C>0,\, \gamma>0$ depending only on $\delta$ such that for all
$x \in \mathbb N$ satisfying
\[
0\leq x \le 2\sqrt{n} - n^\delta
\]
and all $\theta \in \mathbb C$ satisfying $\left| \frac{\theta}{\sqrt{n}} - 1 \right| < \varepsilon_0$ we have
\begin{enumerate}
\item $\big| J_x (2\theta) \big| \le \frac{C e^{\gamma \left| \theta - \sqrt{n}\right|}}{n^{1/8}\sqrt[4]{{2\sqrt{n}-x}}}$;
\item $\big| J_{x+1} (2 \theta) - J_x (2\theta) \big| \le \frac{C e^{\gamma \left| \theta - \sqrt{n}\right|}\sqrt[4]{{2\sqrt{n}-x}}}{n^{{3}/{8}}}$
\end{enumerate}
\end{lemma}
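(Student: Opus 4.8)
The plan is to estimate $J_x(2\theta)$ directly from its Schl\"afli integral by the method of stationary phase, the only genuinely delicate point being uniformity as $x$ approaches the turning point $2\sqrt{n}$. Since $x\in{\mathbb N}$, I would deform the closed contour onto the unit circle and write
\[
J_x(2\theta)=\frac{1}{2\pi i}\oint_{|t|=1}\exp\!\left(\theta\Bigl(t-\tfrac1t\Bigr)\right)t^{-x-1}\,dt=\frac{1}{2\pi}\int_{-\pi}^{\pi}g(\varphi)\,e^{\,i\psi(\varphi)}\,d\varphi,
\]
where the phase $\psi(\varphi)=2\sqrt{n}\sin\varphi-x\varphi$ is \emph{real} and the amplitude $g(\varphi)=\exp\bigl(2i(\theta-\sqrt{n})\sin\varphi\bigr)$ is smooth, with $|g|\le e^{2|\theta-\sqrt{n}|}$ and total variation at most $C|\theta-\sqrt{n}|\,e^{2|\theta-\sqrt{n}|}$. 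Thus everything is reduced to a one-dimensional oscillatory integral with slowly varying amplitude, and the amplitude bound is precisely what produces the factor $e^{\gamma|\theta-\sqrt{n}|}$. For part~(2) the same representation applies to
\[
J_{x+1}(2\theta)-J_x(2\theta)=\frac{1}{2\pi i}\oint_{|t|=1}\exp\!\left(\theta\Bigl(t-\tfrac1t\Bigr)\right)t^{-x-1}\,\frac{1-t}{t}\,dt,
\]
which differs only by the extra amplitude factor $(1-t)/t=e^{-i\varphi}-1$, of modulus $2|\sin(\varphi/2)|$.

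The key steps are then as follows. The phase $\psi$ has exactly two stationary points $\pm\varphi_0$, with $\varphi_0\in(0,\pi/2]$ determined by $\cos\varphi_0=x/(2\sqrt{n})$, and $\psi''(\pm\varphi_0)=\mp\sqrt{4n-x^2}$. I would split $[-\pi,\pi]$ into short intervals centred at $\pm\varphi_0$, on which $|\psi''|\asymp\sqrt{4n-x^2}$, and the complementary region, on which $\psi'$ is monotone and either $|\psi'|$ is bounded below or $|\psi''|\asymp\sqrt{n}$. On the short intervals the van der Corput second-derivative test, applied with bounded-variation amplitude, gives a contribution $\lesssim(4n-x^2)^{-1/4}\,e^{\gamma|\theta-\sqrt{n}|}$; on the complement, integration by parts (where $|\psi'|$ is bounded below) or the second-derivative test (where $|\psi''|\asymp\sqrt{n}$) gives a contribution of strictly smaller order in our range of $x$. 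Since $4n-x^2=(2\sqrt{n}-x)(2\sqrt{n}+x)$ with $2\sqrt{n}\le 2\sqrt{n}+x\le 4\sqrt{n}$ for $0\le x\le 2\sqrt{n}$, one has $(4n-x^2)^{1/4}\asymp n^{1/8}(2\sqrt{n}-x)^{1/4}$, which is assertion~(1). For assertion~(2), on the short interval around $\varphi_0$ carrying the main term the extra factor $2|\sin(\varphi/2)|$ is $\asymp\sin(\varphi_0/2)\asymp\varphi_0\asymp\sqrt{4n-x^2}/\sqrt{n}\asymp n^{-1/4}(2\sqrt{n}-x)^{1/2}$, so the bound of~(1) gets multiplied by this factor, yielding $(2\sqrt{n}-x)^{1/4}\,n^{-3/8}$.

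The hard part is the uniformity of all this as $x$ ranges up to $2\sqrt{n}-n^{\delta}$, i.e.\ as the two stationary points $\pm\varphi_0$ coalesce; this is where $\delta>\tfrac16$ is needed. Two things have to be checked: (i) that the ``width'' of each stationary window, $\asymp|\psi''(\varphi_0)|^{-1/2}=(4n-x^2)^{-1/4}\asymp n^{-1/8}(2\sqrt{n}-x)^{-1/4}$, is genuinely smaller than the separation $2\varphi_0\asymp n^{-1/4}(2\sqrt{n}-x)^{1/2}$ of the two stationary points, so that the windows are disjoint and the amplitude — in particular the factor $(1-t)/t$ relevant to part~(2) — is essentially constant on each; and (ii) that the non-stationary contributions, above all the piece near $\varphi=0$ of size $\asymp\sqrt{n}/(4n-x^2)$, stay below $(4n-x^2)^{-1/4}$. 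A comparison of exponents shows that (i) amounts to $2\sqrt{n}-x\gg n^{1/6}$ and (ii) to $4n-x^2\gg n^{2/3}$, i.e.\ again to $2\sqrt{n}-x\gg n^{1/6}$; both hold because $2\sqrt{n}-x\ge n^{\delta}$ with $\delta>\tfrac16$. For $\delta\le\tfrac16$ the bound~(1) remains true, but the elementary two-saddle analysis must be replaced by the Airy-type asymptotics at the turning point. Everything else — the explicit partition of $[-\pi,\pi]$, the van der Corput bookkeeping, and the asymptotics relating $\varphi_0$ to $2\sqrt{n}-x$ — is routine, and I would record those computations briefly in the write-up.
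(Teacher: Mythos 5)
Your route is genuinely different from the paper's: you keep the contour on the unit circle and treat $J_x(2\theta)$ as a real-phase oscillatory integral with phase $\psi(\varphi)=2\sqrt{n}\sin\varphi-x\varphi$ and a bounded-variation amplitude carrying the $e^{\gamma|\theta-\sqrt n|}$ factor, whereas the paper deforms $\mathcal K$ to short steepest-descent segments through the saddles $e^{\pm i\varphi_u}$ (of length $n^{-1/8-\delta/4+\varepsilon}$) completed by circular arcs on which $\Re S<-Cn^{\varepsilon_0}$, so that everything away from the saddles is exponentially negligible. For claim (1) your bookkeeping is correct: the second-derivative van der Corput bound at $\pm\varphi_0$ gives $(4n-x^2)^{-1/4}\asymp n^{-1/8}(2\sqrt n-x)^{-1/4}$, and your conditions (i) and (ii) both reduce to $2\sqrt n-x\gg n^{1/6}$, which is exactly the same place where the paper uses $\delta>1/6$ (there it appears as $\sqrt n\,|A(t)|\,|t|^3\le Cn^{1/8-3\delta/4+3\varepsilon}$ being bounded on the saddle segment).

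There is, however, a gap in your claim (2): the assertion that ``the bound of (1) gets multiplied by $2|\sin(\varphi/2)|\asymp\varphi_0$'' is only justified on the stationary windows and on the middle region $|\varphi|\lesssim\varphi_0$, where the extra factor is indeed $O(\varphi_0)$. On the outer region $|\varphi|>\varphi_0+\rho$ the factor $|e^{-i\varphi}-1|$ is of order $1$ (it equals $2$ at $\varphi=\pm\pi$), and the crude first-derivative test with $\min|\psi'|\asymp(4n-x^2)^{1/4}$ at the window edge only reproduces the part-(1) bound $(4n-x^2)^{-1/4}$, which exceeds the target $n^{-3/8}(2\sqrt n-x)^{1/4}$ whenever $2\sqrt n-x=o(\sqrt n)$. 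The gap is repairable within your method: split the outer region dyadically in $\varphi-\varphi_0$ (or integrate by parts once and keep the full term $\int\bigl|\left(m/\psi'\right)'\bigr|$, using $|\psi'|\asymp\sqrt n\,(\varphi-\varphi_0)(\varphi+\varphi_0)$); the pieces sum to $\lesssim(\sqrt n\,\rho)^{-1}\asymp n^{-3/8}(2\sqrt n-x)^{1/4}$ plus a term $\lesssim\log n\,(4n-x^2)^{-1/2}$, which is admissible precisely because $\delta>1/6$. Note how the paper avoids this issue entirely: on its deformed contour the region away from the saddles is exponentially small, $|1-z|$ on the saddle segment is $|1-e^{i\varphi_u}|+O(|t|)\lesssim n^{(2\delta-1)/4}$, and the would-be dangerous correction of size $t$ is killed by an odd-symmetry cancellation over the symmetric segment. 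So: correct strategy, correct exponents, but claim (2) needs the extra outer-region argument spelled out before it is a proof.
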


{\bf Remark.} By symmetry, for $x<0$,  $|x| \le 2\sqrt{n} - n^\delta$, we have
$$\big| J_x (2\theta) \big| \le \frac{C e^{\gamma \left| \theta - \sqrt{n}\right|}}{n^{{1}/{8}}\sqrt[4]
{2\sqrt{n}-|x|}};
$$
$$\big| J_{x+1} (2 \theta) + J_x (2\theta) \big| \le \frac{C e^{\gamma \left| \theta - \sqrt{n}\right|}\sqrt[4]{{2\sqrt{n}-|x|}}}{n^{{3}/{8}}}.$$

This is easily proved using the contour integral for the Bessel function; for the reader's convenience, the proof is given in the last Section.

\begin{corollary} \label{bescan} Let $\delta> \frac{1}{6}$. There exist constants $C>0,\, \gamma>0, \, \varepsilon>0$ such that for all $n \in \mathbb N$, all $\theta \in \mathbb C$ satisfying
\[
\left| \frac{\theta}{\sqrt{n}} - 1 \right| < \varepsilon
\]
and all $x,y\in \mathbb N,\, x\ne y, x,y \in \mathcal N(n,\delta)$ we have
\[
\left| J(x,y;\, \theta^2) \right|
\;\le\;
\frac{C}{\left| x - y\right|} \left( \sqrt[4]{\frac{2\sqrt{n}-x}{2\sqrt{n}-y}} + \sqrt[4]{\frac{2\sqrt{n}-y}{2\sqrt{n}-x}}\right) \cdot e^{\gamma \left| \theta - \sqrt{n}\right|}.
\]
\end{corollary}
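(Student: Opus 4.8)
The plan is to substitute the bound of Lemma \ref{bescontour} directly into the defining formula
\[
J(x,y;\,\theta^2)=\theta\,\frac{J_x(2\theta)J_{y+1}(2\theta)-J_{x+1}(2\theta)J_y(2\theta)}{x-y},
\]
after rewriting the numerator in a way that exposes the \emph{differences} $J_{y+1}(2\theta)-J_y(2\theta)$ and $J_{x+1}(2\theta)-J_x(2\theta)$, since it is these differences (not the values themselves) that carry the favorable power $n^{-3/8}$. Concretely, I would use the algebraic identity
\[
J_x(2\theta)J_{y+1}(2\theta)-J_{x+1}(2\theta)J_y(2\theta)
= J_x(2\theta)\bigl(J_{y+1}(2\theta)-J_y(2\theta)\bigr)-J_y(2\theta)\bigl(J_{x+1}(2\theta)-J_x(2\theta)\bigr),
\]
valid term-by-term. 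Each of the two products on the right is now a product of one Bessel value and one Bessel difference.

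Next I would apply the two estimates of Lemma \ref{bescontour} (for $x>0$; the sign conventions for $x<0$ in the Remark would be handled identically). For the first product, $|J_x(2\theta)|\le C e^{\gamma|\theta-\sqrt n|}\,n^{-1/8}(2\sqrt n-x)^{-1/4}$ and $|J_{y+1}(2\theta)-J_y(2\theta)|\le C e^{\gamma|\theta-\sqrt n|}\,n^{-3/8}(2\sqrt n-y)^{1/4}$, so the product is bounded by
\[
C e^{2\gamma|\theta-\sqrt n|}\,n^{-1/2}\,\sqrt[4]{\frac{2\sqrt n-y}{2\sqrt n-x}}.
\]
Symmetrically, the second product is bounded by $C e^{2\gamma|\theta-\sqrt n|}\,n^{-1/2}\,\sqrt[4]{\tfrac{2\sqrt n-x}{2\sqrt n-y}}$. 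Since $|\theta|=\sqrt n$ is built into the hypothesis $|\theta/\sqrt n-1|<\varepsilon$ only in order-of-magnitude, one has $|\theta|\le C\sqrt n$, so the prefactor $\theta$ in the definition of $J(x,y;\theta^2)$ contributes a factor $C\sqrt n$, cancelling the $n^{-1/2}$ and leaving the stated bound with the denominator $|x-y|$ coming from the explicit $x-y$ in the kernel formula. Replacing $2\gamma$ by a new $\gamma$ (and adjusting $\varepsilon_0$ to $\varepsilon$) gives exactly the claimed inequality.

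There is essentially no obstacle here: the content is entirely in Lemma \ref{bescontour}, and the corollary is the mechanical step of feeding that lemma through the definition of the discrete Bessel kernel while being careful that the $n^{-3/8}$ in part (2) of the lemma — rather than the $n^{-1/8}$ of part (1) — is what makes the two powers of $n$ combine to $n^{-1/2}$ and thus be absorbed by the $\theta\sim\sqrt n$ prefactor. The only minor point requiring a word of care is the case distinction in signs of $x$ and $y$: when one of them is negative one uses the sum $J_{x+1}+J_x$ from the Remark in place of the difference, but since we only ever estimate absolute values this does not affect the final bound, which is already symmetric in $x\leftrightarrow y$ and in the signs through the factors $2\sqrt n-|x|$, $2\sqrt n-|y|$.
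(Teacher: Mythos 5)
Your proposal is correct and coincides with the paper's own argument: the paper likewise rewrites the numerator via the identity $J_{x+1}(2\theta)J_y(2\theta)-J_{y+1}(2\theta)J_x(2\theta)=\bigl(J_{x+1}(2\theta)-J_x(2\theta)\bigr)J_y(2\theta)-\bigl(J_{y+1}(2\theta)-J_y(2\theta)\bigr)J_x(2\theta)$ and then feeds both parts of Lemma \ref{bescontour} through the kernel formula, with the prefactor $\theta\sim\sqrt n$ absorbing the $n^{-1/2}$ exactly as you describe. No gap; your remark about negative indices is not even needed here since the corollary assumes $x,y\in\mathbb N$.
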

\begin{proof}
Immediate from the preceding Lemma and the formula
\begin{multline}
J_{x+1}(2\theta) J_y (2 \theta) - J_{y+1} (2\theta) J_x (2\theta) =\\
=\left( J_{x+1} (2 \theta) - J_x (2\theta) \right) J_y(2\theta)
-\left( J_{y+1}(2\theta) - J_y(2\theta) \right) J_x(2\theta).
\end{multline}
\end{proof}

\begin{corollary} \label{bestriv} Let $\delta > \frac{1}{6}$. There exist constants $C>0,\, \gamma>0,\, \varepsilon >0$ such that for all $n \in \mathbb N$, all $\theta \in \mathbb C$ satisfying
\[
\left| \frac{\theta}{\sqrt{n}} - 1 \right| < \varepsilon
\]
and all $x,y\in \mathbb Z,\, x\ne y, x,y \in \mathcal N(n,\delta)$ we have
\[
\left| J(x,y; \, \theta^2) \right|
\;\le\;
C \cdot \frac{\sqrt[4]{n} \cdot e^{\gamma \left| \theta - \sqrt{n} \right|}}{\sqrt[4]{(2\sqrt{n} - \left| x\right|)(2\sqrt{n} - \left| y \right|)}\left| x-y\right|}.
\]
\end{corollary}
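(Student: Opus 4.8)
The plan is to prove this as the ``trivial'' bound on the discrete Bessel kernel, obtained by estimating each of the four Bessel functions entering the definition of $J(x,y;\theta^2)$ by its amplitude, without using any cancellation in the numerator $J_x(2\theta)J_{y+1}(2\theta)-J_{x+1}(2\theta)J_y(2\theta)$ --- this is precisely what distinguishes it from Corollary \ref{bescan}. Since the asserted bound is symmetric under $x\mapsto|x|$ and $y\mapsto|y|$, it suffices, by the Remark following Lemma \ref{bescontour} (which packages the reflection $J_{-m}(2\theta)=(-1)^mJ_m(2\theta)$), to combine part (1) of that Lemma with its negative-index counterpart. Thus, for $n$ large and each $m\in\{x,\,x+1,\,y,\,y+1\}$, I would write
\[
\left|J_m(2\theta)\right|\;\le\;\frac{C\,e^{\gamma\left|\theta-\sqrt{n}\right|}}{n^{1/8}\sqrt[4]{2\sqrt{n}-\left|m\right|}}.
\]
The only point needing attention is that $x+1$ (resp. $y+1$) may lie one step beyond $\mathcal N(n,\delta)$ when $x$ (resp. $y$) sits at the right edge; since $\delta>\tfrac16$, for $n$ large one has $\left|x+1\right|\le 2\sqrt{n}-n^{\delta'}$ for some $\delta'\in(\tfrac16,\delta)$ and $2\sqrt{n}-\left|x+1\right|\ge\tfrac12(2\sqrt{n}-\left|x\right|)$ (and likewise for $y$), so the shift costs only a numerical factor absorbed into the constants; the finitely many small $n$ are dealt with by enlarging $C$.

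Multiplying the bounds for $J_x$ and $J_{y+1}$, and for $J_{x+1}$ and $J_y$, and adding, I get
\[
\left|J_x(2\theta)J_{y+1}(2\theta)-J_{x+1}(2\theta)J_y(2\theta)\right|\;\le\;\frac{C\,e^{2\gamma\left|\theta-\sqrt{n}\right|}}{n^{1/4}\sqrt[4]{(2\sqrt{n}-\left|x\right|)(2\sqrt{n}-\left|y\right|)}}.
\]
Dividing by $\left|x-y\right|\ge 1$, multiplying by $\left|\theta\right|$, and using $\left|\theta\right|<(1+\varepsilon)\sqrt{n}<2\sqrt{n}$ so that $\left|\theta\right|/n^{1/4}<2\sqrt[4]{n}$, I obtain
\[
\left|J(x,y;\theta^2)\right|\;\le\;\frac{C\,\sqrt[4]{n}\,e^{2\gamma\left|\theta-\sqrt{n}\right|}}{\left|x-y\right|\sqrt[4]{(2\sqrt{n}-\left|x\right|)(2\sqrt{n}-\left|y\right|)}},
\]
which, after relabelling $2\gamma$ as $\gamma$ and absorbing numerical factors into $C$, is the assertion; the constants $\varepsilon,\gamma$ depend only on $\delta$, as they inherit this from Lemma \ref{bescontour}.

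There is essentially no genuine obstacle here: the statement is a one-line consequence of the amplitude estimate of Lemma \ref{bescontour}. The only bookkeeping is (a) reducing the negative-index and mixed-sign cases to the positive-index amplitude bound via the reflection formula, which is already encapsulated in the Remark, and (b) the harmless comparison of $2\sqrt{n}-\left|m\pm1\right|$ with $2\sqrt{n}-\left|m\right|$ near the edge of $\mathcal N(n,\delta)$; I expect (b) to be the only place requiring any care, and it is handled exactly as above. As an alternative route valid for $x,y\in\mathbb N$, one may bypass Lemma \ref{bescontour} and deduce the Corollary straight from Corollary \ref{bescan}, since
\[
\sqrt[4]{\frac{2\sqrt{n}-x}{2\sqrt{n}-y}}+\sqrt[4]{\frac{2\sqrt{n}-y}{2\sqrt{n}-x}}=\frac{\sqrt{2\sqrt{n}-x}+\sqrt{2\sqrt{n}-y}}{\sqrt[4]{(2\sqrt{n}-x)(2\sqrt{n}-y)}}\le\frac{2\sqrt{2}\,\sqrt[4]{n}}{\sqrt[4]{(2\sqrt{n}-x)(2\sqrt{n}-y)}},
\]
and then extend to general $x,y\in\mathbb Z$ by the same reflection; but the direct brute-force argument above is shorter and uniform in the signs.
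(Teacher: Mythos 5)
Your proof is correct and is essentially the paper's argument: the paper derives Corollary \ref{bestriv} directly from the amplitude bound of Lemma \ref{bescontour} (with its negative-index Remark), exactly as you do by bounding each of the four Bessel factors, using $|\theta|\le 2\sqrt{n}$ and $|x-y|\ge 1$. Your handling of the shift from $x,y$ to $x+1,y+1$ near the edge of $\mathcal N(n,\delta)$ and the finitely many small $n$ is sound bookkeeping that the paper leaves implicit.
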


These are immediate from Lemma \ref{bescontour}.

For the values of $x$ that are ``close the edge'', we use the following estimate.

\begin{proposition} \label{onesix} There exist constants $C>0,\, \varepsilon>0, \, \gamma>0$ such that for all $x \in \mathbb Z$, all $n \in \mathbb N$ and all $\theta \in \mathbb C$ satisfying
\[
\left| \frac{\theta}{\sqrt{n}} - 1 \right| < \varepsilon
\]
we have
\[
\left| J_x (2 \theta) \right| \;\le\; \frac{C e^{\gamma \left| \theta - \sqrt{n} \right|}}{n^{\frac{1}{6}}}.
\]
\end{proposition}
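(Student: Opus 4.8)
The plan is to read the bound off the classical integral representation
\[
J_x(2\theta)=\frac{1}{2\pi}\int_{-\pi}^{\pi} e^{\,i\left(2\theta\sin\phi-x\phi\right)}\,d\phi,
\]
which is valid for every $x\in\mathbb Z$ and every $\theta\in\mathbb C$; since $J_{-x}(2\theta)=(-1)^{x}J_x(2\theta)$ it suffices to treat $x\ge 0$. The conceptual point is that the relevant stationary phase sits near $\phi=0$ when $x$ is close to $2\sqrt n$ and is then \emph{degenerate of cubic type} (its second derivative vanishes there while its third derivative is of size $\sqrt n$), which is exactly why the decay exponent is $1/6$ and no better; for all other $x$ the decay is in fact faster, so a single bound of the asserted shape holds uniformly in $x$.

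First I would split off the non-oscillatory part of the integrand. Writing $\theta_1=\Re\theta$, $\theta_2=\Im\theta$, the integrand equals $e^{\,i\Psi(\phi)}g(\phi)$ with the \emph{real} phase $\Psi(\phi)=2\theta_1\sin\phi-x\phi$ and amplitude $g(\phi)=e^{-2\theta_2\sin\phi}$. Because $\sqrt n$ is real we have $|\theta_2|\le|\theta-\sqrt n|$, whence the sup norm of $g$ and the $L^1$ norm of $g'$ on $[-\pi,\pi]$ are both at most $C\,e^{3|\theta-\sqrt n|}$; moreover $|\theta/\sqrt n-1|<\varepsilon$ with $\varepsilon\le\tfrac12$ forces $\theta_1\ge\tfrac12\sqrt n$. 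Now partition $[-\pi,\pi]$ into the five arcs $[-\tfrac{\pi}{3},\tfrac{\pi}{3}]$, $\pm[\tfrac{\pi}{3},\tfrac{2\pi}{3}]$ and $\pm[\tfrac{2\pi}{3},\pi]$. On $[-\tfrac{\pi}{3},\tfrac{\pi}{3}]$ one has $|\Psi'''(\phi)|=2\theta_1|\cos\phi|\ge\theta_1\ge\tfrac12\sqrt n$; on $\pm[\tfrac{\pi}{3},\tfrac{2\pi}{3}]$ one has $|\Psi''(\phi)|=2\theta_1|\sin\phi|\ge\tfrac12\sqrt n$; and on $\pm[\tfrac{2\pi}{3},\pi]$ one has, using $x\ge 0$, that $\Psi'(\phi)=2\theta_1\cos\phi-x\le-\theta_1\le-\tfrac12\sqrt n$, with $\Psi'$ monotone on each such arc since $\Psi''(\phi)=-2\theta_1\sin\phi$ does not change sign there. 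Applying the van der Corput estimate with a $C^{1}$ amplitude — of order $3$, $2$ and $1$ on these three families of arcs respectively — bounds the corresponding pieces by $Cn^{-1/6}e^{3|\theta-\sqrt n|}$, $Cn^{-1/4}e^{3|\theta-\sqrt n|}$ and $Cn^{-1/2}e^{3|\theta-\sqrt n|}$; summing the five contributions and using $n^{-1/4},n^{-1/2}\le n^{-1/6}$ yields the proposition with $\gamma=3$.

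I do not anticipate a genuine obstacle, since every ingredient is classical, but the one subtlety worth flagging is the passage to complex $\theta$: a crude modulus estimate on $e^{i(2\theta\sin\phi-x\phi)}$ destroys all the cancellation and yields only $|J_x(2\theta)|\le e^{2|\theta-\sqrt n|}$, with no gain in $n$. Keeping the oscillation inside the real phase $\Psi$ and treating $e^{-2\theta_2\sin\phi}$ as a $C^1$ amplitude whose sup norm and whose derivative's $L^1$ norm are controlled purely by $|\theta-\sqrt n|$ is what makes the van der Corput lemmas applicable and produces the factor $e^{\gamma|\theta-\sqrt n|}$ for free. An essentially equivalent alternative — and the one matching the proofs of Lemma~\ref{bescontour} and Lemma~\ref{besselmain} — is to deform the contour in $J_x(2\theta)=\frac{1}{2\pi i}\oint_{|t|=\rho}e^{\theta(t-1/t)}t^{-x-1}\,dt$ through the (possibly coalescing) saddles $t_{\pm}=(x\pm\sqrt{x^{2}-4\theta^{2}})/(2\theta)$ and perform steepest descent, retaining the cubic term when $x$ lies within $O(n^{1/6})$ of $2\theta$; this is equally routine.
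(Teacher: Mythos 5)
Your argument is correct, but it takes a different route from the paper: the paper disposes of Proposition~\ref{onesix} in one line, by citing Olver's uniform (Airy-type) asymptotics for Bessel functions of large order as presented in Abramowitz--Stegun, from which the turning-point bound $|J_x|\lesssim x^{-1/3}\sim n^{-1/6}$ is read off directly. You instead give a self-contained elementary proof: starting from Bessel's integral $J_x(2\theta)=\frac{1}{2\pi}\int_{-\pi}^{\pi}e^{i(2\theta\sin\phi-x\phi)}\,d\phi$ (valid for all integer $x$ and complex $\theta$ via the generating function), you put the oscillation into the real phase $\Psi(\phi)=2\Re\theta\,\sin\phi-x\phi$ and treat $e^{-2\Im\theta\,\sin\phi}$ as a $C^{1}$ amplitude whose sup norm and derivative's $L^{1}$ norm are $O(e^{3|\theta-\sqrt n|})$, then apply van der Corput of orders $3$, $2$, $1$ on the arcs $|\phi|\le\pi/3$, $\pi/3\le|\phi|\le 2\pi/3$, $2\pi/3\le|\phi|\le\pi$, where $|\Psi'''|$, $|\Psi''|$, $|\Psi'|$ respectively are $\gtrsim\sqrt n$ (with $\Psi'$ monotone on the outer arcs, and $x\ge 0$ by the symmetry $J_{-x}=(-1)^xJ_x$); the worst exponent $n^{-1/6}$ comes, as you say, from the degenerate cubic stationary point near $\phi=0$. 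I checked the derivative bounds and the uniformity of the van der Corput constants in $x$ and $\theta$, and the argument goes through with $\gamma=3$. What your approach buys is transparency about the complex poissonization parameter: the factor $e^{\gamma|\theta-\sqrt n|}$ falls out of the amplitude estimates rather than having to be extracted from uniform asymptotics stated (in the cited handbook formulas) for real variables; it is also closer in spirit to the contour-integral proofs of Lemmas~\ref{bescontour} and~\ref{besselmain}, as you note in your alternative. What the paper's citation buys is brevity and the sharper information that the maximum of $|J_x(2\theta)|$ over $x$ is of exact order $n^{-1/6}$ and is attained near the turning points $x\approx\pm 2\sqrt n$, which is more than the upper bound needed here.
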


Proposition \ref{onesix} is immediate from the uniform asymptotic estimates of Olver for Bessel functions
(see, e.g., Abramowitz and Stegun \cite{AS}, 9.3.5, 9.3.6, 9.3.38, 9.3.39).

For values of $x$ ``beyond the edge'', we use the following stretched exponential estimate.

\begin{proposition} \label{expest}
There exists  constants $\varepsilon_1>0, \varepsilon_2>0$ such that the following is true.
For any $\delta > \frac{1}{6}$ there exist constants $C>0, \, \varepsilon>0, \, \gamma>0, \, \widetilde\gamma>0$ such that for all $\delta: \, \delta_0 \le \delta \le \frac{1}{2}$, all $n \in  \mathbb N$, all $\theta \in \mathbb C$ satisfying $\left| \frac{\theta}{\sqrt{n}} - 1 \right| < \varepsilon$ and all $x \in \mathbb N$ satisfying
$2\sqrt{n} + n^\delta<x<(2+\varepsilon_1)\sqrt{n}$, we have
\[
\left| J_x(2\theta) \right|
\;\le\;
C \exp\left(-\,\widetilde\gamma \, \frac{(x-2\sqrt{n})^{3/2}}{n^{{1}/{4}}} + \gamma \left| \theta - \sqrt{n} \right|\right),
\]
while for $x>(2+\varepsilon_1)\sqrt{n}$ we have
\[
\left| J_x(2\theta) \right|
\;\le\;
C \exp\left(-\varepsilon_2(x-2\sqrt{n}) + \gamma \left| \theta - \sqrt{n} \right|\right).
\]

\end{proposition}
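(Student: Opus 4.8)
The plan is to derive both estimates from the classical contour representation of the Bessel function,
\[
J_x(2\theta)=\frac{1}{2\pi i}\oint\limits_{|w|=\rho}\frac{e^{\theta(w-1/w)}}{w^{x+1}}\,dw ,
\]
valid for every $\rho>0$ and every integer $x\geq 0$ (it is the coefficient of $w^{x}$ in the generating function $e^{\theta(w-1/w)}=\sum_{n}J_{n}(2\theta)\,w^{n}$). Bounding the integrand on the circle $|w|=\rho$ in absolute value gives the crude but robust inequality
\[
|J_x(2\theta)|\ \leq\ \rho^{-x}\max_{0\le\phi\le 2\pi}\exp\!\big(\Re\,\theta(\rho e^{i\phi}-\rho^{-1}e^{-i\phi})\big)\ \leq\ \rho^{-x}\exp\!\big(\sqrt n\,(\rho-\rho^{-1})+2\rho\,|\theta-\sqrt n|\big),
\]
where in the last step I split $\theta=\sqrt n+(\theta-\sqrt n)$ and used $|\Re\theta-\sqrt n|,\ |\Im\theta|\leq|\theta-\sqrt n|$ together with $\rho\geq 1$. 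As long as $\rho$ is kept bounded the factor $e^{2\rho|\theta-\sqrt n|}$ is of the allowed form $e^{\gamma|\theta-\sqrt n|}$, and the problem reduces to making $\rho^{-x}e^{\sqrt n(\rho-\rho^{-1})}$ small by a good choice of $\rho$.

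The optimal choice is the saddle $\rho=e^{\alpha}$ with $\alpha=\arccosh\!\big(x/(2\sqrt n)\big)\geq 0$ (defined because $x>2\sqrt n$): then $\rho+\rho^{-1}=x/\sqrt n$, $\rho-\rho^{-1}=2\sinh\alpha$, so
\[
-x\log\rho+\sqrt n(\rho-\rho^{-1})=-x\alpha+2\sqrt n\sinh\alpha=-2\sqrt n\,g(\alpha),\qquad g(\alpha):=\alpha\cosh\alpha-\sinh\alpha .
\]
Since $g>0$ on $(0,\infty)$, the two claimed bounds now follow from two elementary facts about $g$. Near the edge — when $\eta:=\cosh\alpha-1=x/(2\sqrt n)-1$ is small, which, once $\varepsilon_1$ is taken small, is the entire range $2\sqrt n+n^{\delta}<x<(2+\varepsilon_1)\sqrt n$ — one has $\alpha\asymp\sqrt{\eta}$ and $g(\alpha)=\tfrac13\alpha^{3}+O(\alpha^{5})\gtrsim\eta^{3/2}$, hence $2\sqrt n\,g(\alpha)\geq\widetilde\gamma\,(x-2\sqrt n)^{3/2}/n^{1/4}$ for a suitable $\widetilde\gamma>0$; here $\rho=e^{\alpha}$ is close to $1$, so the perturbation is harmless and the first assertion follows.

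When $x$ is of order $\sqrt n$ but away from the edge, say $(2+\varepsilon_1)\sqrt n\leq x\leq C_0\sqrt n$, the ratio $g(\alpha)/(\cosh\alpha-1)$ is continuous and strictly positive on the corresponding compact $\alpha$-interval, hence bounded below, so $2\sqrt n\,g(\alpha)\geq c\,(x-2\sqrt n)$, and $\rho=e^{\alpha}$ is still bounded — this gives the second assertion on that range. Finally, for $x>C_0\sqrt n$ I abandon the saddle and take a fixed $\rho_0>1$: the crude bound gives $|J_x(2\theta)|\leq\exp\big(-x\log\rho_0+\sqrt n(\rho_0-\rho_0^{-1})+2\rho_0|\theta-\sqrt n|\big)$, and once $C_0=C_0(\rho_0)$ is large enough that $\sqrt n(\rho_0-\rho_0^{-1})\leq\tfrac12 x\log\rho_0$, this is at most $\exp\big(-\varepsilon_2(x-2\sqrt n)+\gamma|\theta-\sqrt n|\big)$ with $\varepsilon_2=\tfrac12\log\rho_0$. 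Matching the three ranges — taking the smallest $\varepsilon_2$, a common $\varepsilon=\varepsilon_0$, and common $\gamma,\widetilde\gamma,C\geq 1$ — completes the proof. I expect the only real work to be this bookkeeping: verifying that $C,\varepsilon,\gamma,\widetilde\gamma$ can be chosen uniformly in $n\in\mathbb N$ and in $\delta\in[\delta_0,\tfrac12]$, which comes down to the two inequalities for $g$ above together with the observation that the complex-$\theta$ perturbation $e^{2\rho|\theta-\sqrt n|}$ is controlled precisely because $\rho$ was kept bounded in each regime; there is no genuine analytic subtlety, the crude contour estimate making any actual steepest-descent analysis unnecessary.
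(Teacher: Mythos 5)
Your proof is correct and follows exactly the route the paper indicates: the paper dismisses Proposition \ref{expest} as ``immediate from the contour integral representation of Bessel functions,'' and your argument supplies precisely those details, bounding the integrand on a circle of radius $\rho=e^{\alpha}$, $\cosh\alpha=x/(2\sqrt n)$, to get the exponent $-2\sqrt n\,(\alpha\cosh\alpha-\sinh\alpha)$, which gives the $(x-2\sqrt n)^{3/2}/n^{1/4}$ decay near the edge and linear decay beyond $(2+\varepsilon_1)\sqrt n$, with the complex perturbation absorbed into $e^{\gamma|\theta-\sqrt n|}$ because $\rho$ stays bounded.
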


Proposition \ref{expest} is immediate from the contour integral representation of Bessel functions.
Note that Proposition \ref{ydelta} is immediate from Proposition \ref{expest} by depoissonization.

\subsection{Proof of Lemma \ref{varest}.}

We now derive Lemma \ref{varest} from Corollary \ref{varform}.
\begin{proof}
First, note the clear inequality
\[
\psi(l_0,r) \;<\;
\begin{cases}
2\left( \log\left( \frac{r}{l_0-1} \right) + \frac{r}{r-1} \right) & \text{for } l_0 \le r \\
\frac{2r}{l_0-1} & \text{for } l_0>r
\end{cases}
\]
We further assume that $n$ is large enough so that $n^\delta> l_0$. We consider several cases.

{\it{Case 1. The Bulk.}}

First, take $\varepsilon>0$ and denote
\[
\mathcal N_\varepsilon = \left\{ (x,y) \in \mathbb Z^2: \, \left| x \right|, \left| y \right| \le (2-\varepsilon) \sqrt{n} \, \right\}.
\]
In this case, the Debye asymptotics implies
\[
\left| J\left(x,y;\, \theta^2\right) \right|^2 \;\le\; \frac{Ce^{\gamma \left| \theta - \sqrt{n} \right|}}{\left( \left| x-y\right| +1 \right)^2},
\]
and we consequently have
\[
\frac{1}{\sqrt{n}} \sum_{(x,y) \in \mathcal N_\varepsilon} \psi(l_0,r) \left| J (x,y; \, \theta^2) \right|^2 \;\le\; \frac{C \log l_0}{l_0} e^{\gamma \left| \theta - \sqrt{n} \right|}.
\]
We proceed to the analysis of the remaining terms. Recall our notation
\[
\mathcal N(n, \delta) = \left\{ x \in \mathbb Z: \, \left| x\right| \le 2 \sqrt{n} - n^\delta \right\}, \quad r = \left| x-y\right|
\]
and denote
\[
\mathcal N_2(n,\delta)  = \left\{ y \in \mathbb Z: \left| y\right| \le 2 \sqrt{n} - \frac{1}{2} n^\delta \right\}.
\]

{\it{Case 2. Close Points.}}

Denote
\[
\mathcal N_2 = \left\{ (x,y) \in \mathbb Z^2:\; x \in \mathcal N(n,\delta), y \in \mathcal N_2(n,\delta), \;\left| x-y\right| \le n^\delta \right\}.
\]
In this case, again, we have
\[
\left| J\left(x,y; \, \theta^2\right) \right|^2 \; \le \; \frac{C e^{\gamma \left| \theta - \sqrt{n}\right|}}{\left( \left| x-y\right| +1 \right)^2}.
\]
Indeed, if $x$ and $y$ are both positive, then the estimate follows from Corollary \ref{bescan}; if both are negative,
then the estimate follows by symmetry;
while if $x$ and $y$ have different signs, then $(x,y) \in \mathcal N_\varepsilon$, and the estimate also holds. Consequently, again we have
\[
\frac{1}{\sqrt{n}} \sum_{(x,y) \in \,\mathcal N_2} \psi(l_0,r) \left| J\left(x,y; \theta^2\right) \right|^2 \; \le \; \frac{C \log l_0}{l_0} e^{\gamma \left| \theta - \sqrt{n} \right|}.
\]

{\bf{Remark.}} Of course, $\mathcal N_\varepsilon \cap \mathcal N_2 \ne \emptyset$, but that does not matter since we are only concerned with upper estimates.

{\it{Case 3. Distant Positive Points.}}

Set
\[
\mathcal N_3 = \left\{ (x,y) \in \mathbb Z^2: \, x\ge 0, \, y\ge 0, \, x \in \mathcal N(n,\delta), \, y \in \mathcal N_2(n,\delta), \, \left| x-y\right| > n^\delta \right\}.
\]
For definiteness, let $x>y$. Corollary \ref{bescan} implies
the bound
\[
\left| J\left(x,y; \, \theta^2\right) \right|^2 \; \le \; \frac{C e^{\gamma \left| \theta - \sqrt{n} \right|}}{r^2} \left( 1 + \sqrt{\frac{r}{2\sqrt{n} - x}} \,\right)
\]
and, consequently, there exists $\delta_2 >0$ depending only on $\delta$ such that
\[
\frac{1}{\sqrt{n}} \sum_{(x,y) \in {\mathcal N}_3} \psi(l_0, r) \cdot \left| J\left(x,y;\, \theta^2\right) \right|^2 \;\le\; Cn^{-\delta_2}\cdot e^{\gamma \left| \theta - \sqrt{n} \right|}.
\]

{\it{Case 4. Distant Negative Points.}}
Set
\[
\mathcal N_4 = \left\{ (x,y) \in \mathbb Z^2: \, x\le 0, \, y\le 0, \, x \in \mathcal N(n,\delta), \, y \in \mathcal N_2(n,\delta), \, \left| x-y\right| > n^\delta \right\}.
\]
This case is similar to the previous one.

{\it Case 5. Distant Points of Opposite Signs.}

Set
\[
\mathcal N_5 = \left\{ (x,y) \in \mathbb Z^2: \, x \in \mathcal N(n,\delta), \, y \in \mathcal N_2(n,\delta), \, (x,y) \notin \mathcal N_\varepsilon,\; x \text{ and } y \text{ have opposite signs} \right\}.
\]
In this case $\left| x-y \right| \ge \sqrt{n}$, and Corollary \ref{bestriv} implies
\[
\left| J\left(x,y; \theta^2 \right) \right| \;\le\;
\frac{C e^{\gamma \left| \theta - \sqrt{n} \right|}}{\sqrt{n} \cdot \sqrt[4]{\left( 2\sqrt{n} - \left| x\right| \right) \left( 2 \sqrt{n} - \left| y\right| \right)}},
\]
whence
\[
\frac{1}{\sqrt{n}} \sum_{(x,y) \in \, \mathcal N_4} \psi(l_0,r) \cdot \left| J \left( x,y; \, \theta^2\right) \right|^2 \; \le \; \frac{C e^{\gamma \left| \theta - \sqrt{n} \right|}}{\sqrt{n}}.
\]

{\it{Case 6. Large Values of $y$.}}

Set
\[
\mathcal N_6 = \left\{ (x,y) \in \mathbb Z^2: \, x \in \mathcal N(n,\delta), \; \left| y \right| \ge 2\sqrt{n} + n^\delta \right\}.
\]
In this case, by Proposition \ref{expest}, there exists $\delta_6 > 0$ depending only on $\delta$ such that
\[
\left| J \left( x,y; \, \theta^2 \right) \right| \; \le \; C e^{\gamma \left| \theta - \sqrt{n} \right| - (y - 2\sqrt{n} )^{\delta_6}}
\]
\begin{center}
\end{center}
and, consequently, there exists $\delta_7>0$ depending only on $\delta_6$ such that we have
\[
\sum_{(x,y) \in \, \mathcal N_6} \psi (l_0, r) \cdot \left| J\left(x,y; \, \theta^2 \right) \right|^2 \;\le\;
C e^{\gamma \left| \theta - \sqrt{n} \right| - n^{\delta_7}}.
\]

{\it{Case 7. The Point $y$ on the Edge.}}

Set
\[
\mathcal N_7 = \left\{ (x,y) \in \mathbb Z^2: \, x \in \mathcal N(n,\delta), \; 2\sqrt{n} - \frac{1}{2}n^\delta \le \left| y \right| \le 2\sqrt{n} + n^\delta \right\}.
\]
In this case, Lemma \ref{bescontour} and Proposition \ref{onesix} give
\[
\left| J\left( x,y; \, \theta^2 \right) \right| \; \le \;
\frac{C e^{\gamma \left| \theta - \sqrt{n} \right|} \cdot n^{\frac{5}{24}}}{\left( 2\sqrt{n} - \left| x \right|\right)^{\frac{5}{4}}},
\]
whence
\begin{multline*}
\frac{1}{\sqrt{n}} \sum_{(x,y) \in \, \mathcal N_7} \psi(l_0,r) \cdot \left| J \left( x,y; \, \theta^2 \right) \right|^2
\;\le\;\\
\le
C e^{\gamma \left| \theta - \sqrt{n} \right|} \cdot \frac{n^{\frac{5}{12}}}{\sqrt{n}} \cdot n^\delta \cdot \sum_{x \in \, \mathcal N(n,\delta)} \frac{1}{\left(2 \sqrt{n} - \left| x\right| \right)^{\frac{5}{2}}} \le
\\
\le C e^{\gamma \left| \theta - \sqrt{n} \right|} \cdot n^{-\frac{1}{12}-\frac{\delta}{2}}.
\end{multline*}
The Lemma is proved completely.
\end{proof}

\section{Proofs of Estimates for the Discrete Bessel Kernel.}
\label{secbessel}

\subsection{Proof of Lemma \ref{bescontour}}
It is convenient to prove the following equivalent reformulation of Lemma \ref{bescontour}.
\begin{lemma}
\label{bescontourtwo}
There exists $\varepsilon_0 > 0$ such that the following holds.

\noindent
For any $\delta_0 > \frac{1}{6}$ and any $K>0$ there exist constants $C>0,\, \gamma>0$ depending only on $\delta_0$ and $K$, such that for all $\delta: \, \delta_0\le \delta \le \frac{1}{2}$, $n \in \mathbb N$, all $x \in \mathbb N$ satisfying
\[
x \le 2\sqrt{n} - K n^\delta
\]
and all $\theta \in \mathbb C$ satisfying $\left| \frac{\theta}{\sqrt{n}} - 1 \right| < \varepsilon_0$ we have

\begin{enumerate}
 \item $\big| J_x (2\theta) \big| \le \frac{C e^{\gamma \left| \theta - \sqrt{n}\right|}}{n^{\frac{\delta}{4} + \frac{1}{8}}};$
 \item $\big| J_{x+1} (2 \theta) - J_x (2\theta) \big| \le \frac{C e^{\gamma \left| \theta - \sqrt{n}\right|}}{n^{\frac{3}{8} - \frac{\delta}{4}}}$
\end{enumerate}
\end{lemma}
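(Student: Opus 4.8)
\emph{Proof sketch.} The plan is to run the classical Debye saddle–point analysis of the Schl\"afli contour integral
\[
J_x(2\theta)=\frac{1}{2\pi i}\oint_{|t|=1}\exp\bigl(\theta(t-t^{-1})\bigr)\,t^{-x-1}\,dt ,
\]
valid for complex $\theta$ by analytic continuation, carrying it beyond the bulk regime of the Debye expansion (\ref{debyewatson}) down to distance $\asymp n^{\delta_0}$ from the edge, and arranging the estimate so that all the $\theta$‑dependence is absorbed into a factor $e^{\gamma|\theta-\sqrt n|}$. Write the integrand as $e^{\Psi(t)}t^{-1}\,dt$ with $\Psi(t)=\theta(t-t^{-1})-x\log t$; the critical points of $\Psi$ are $t_\pm(\theta)=\bigl(x\pm\sqrt{x^2-4\theta^2}\bigr)/(2\theta)$, they satisfy $t_+(\theta)t_-(\theta)=1$, and for $\theta=\sqrt n$ they lie on the unit circle at $e^{\pm iu}$, $\cos u=x/(2\sqrt n)$. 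The hypothesis $x\le 2\sqrt n-Kn^{\delta}$ gives $\sin u\gtrsim n^{\delta/2-1/4}$ (indeed $\sin u\asymp\sqrt{2\sqrt n-x}\,n^{-1/4}$), hence $|\Psi''(e^{\pm iu})|=2\sqrt n\sin u\asymp n^{1/4}\sqrt{2\sqrt n-x}$ while $|\Psi'''(e^{\pm iu})|\asymp\sqrt n$; the crucial point is that the inequality $|\Psi''|^{3}\gg|\Psi'''|^{2}$ needed to control the error in the Gaussian approximation at the saddles is equivalent to $2\sqrt n-x\gg n^{1/6}$, i.e. to the hypothesis $\delta_0>\tfrac16$ (for $2\sqrt n-x\lesssim n^{1/6}$ the two saddles coalesce into an Airy turning point and one would instead use the bound of Proposition~\ref{onesix}). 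I would then distinguish two regimes in $\theta$.

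First, in the regime $|\theta-\sqrt n|\le c(2\sqrt n-x)$ with $c$ a small absolute constant, the saddles $t_\pm(\theta)$ are a perturbation of $e^{\pm iu}$ small on the scale $\sin u$, so one deforms $|t|=1$ to the steepest–descent contour through $t_+(\theta)$ and $t_-(\theta)$, and the local Laplace estimate together with $|t_\pm(\theta)|\asymp1$ yields
\[
|J_x(2\theta)|\;\le\;C\sum_{\pm}|\Psi''(t_\pm(\theta))|^{-1/2}e^{\Re\Psi(t_\pm(\theta))}\;\le\;\frac{C\,e^{\Re\Psi(t_\pm(\theta))}}{n^{1/8}\,(2\sqrt n-x)^{1/4}}.
\]
To produce the factor $e^{\gamma|\theta-\sqrt n|}$ I would use that, $t_\pm(\theta)$ being a critical point, $\tfrac{d}{d\theta}\Psi(t_\pm(\theta))=t_\pm(\theta)-t_\pm(\theta)^{-1}=\pm\sqrt{x^2-4\theta^2}/\theta$, whose modulus is $\lesssim1$ on the segment from $\sqrt n$ to $\theta$; since $\Psi(e^{\pm iu})=i(2\sqrt n\sin u-xu)$ is purely imaginary, integrating gives $\Re\Psi(t_\pm(\theta))\le C|\theta-\sqrt n|$. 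For part~(2) one repeats the computation for $J_{x+1}(2\theta)-J_x(2\theta)=\tfrac{1}{2\pi i}\oint_{|t|=1}e^{\Psi(t)}(1-t)t^{-2}\,dt$: the extra amplitude $1-t$ has modulus $|1-t_\pm(\theta)|\lesssim\sqrt{2\sqrt n-x}\,n^{-1/4}$ at the saddle, so the bound above picks up that factor and becomes $C\,(2\sqrt n-x)^{1/4}n^{-3/8}e^{\gamma|\theta-\sqrt n|}$.

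Second, in the complementary regime $|\theta-\sqrt n|>c(2\sqrt n-x)\ge cKn^{\delta}$, I would discard the contour–integral estimate and use only the elementary bound $|J_x(2\theta)|\le e^{2|\Im\theta|}\le e^{2|\theta-\sqrt n|}$, immediate from $J_x(2\theta)=\tfrac1{2\pi}\int_{-\pi}^{\pi}e^{i(2\theta\sin\phi-x\phi)}\,d\phi$, and likewise $|J_{x+1}(2\theta)-J_x(2\theta)|\le2e^{2|\theta-\sqrt n|}$. Since here $|\theta-\sqrt n|$ exceeds a fixed power $n^{\delta}$, for any $\gamma>2$ one has $e^{2|\theta-\sqrt n|}\le Cn^{-\delta/4-1/8}e^{\gamma|\theta-\sqrt n|}$ and $2e^{2|\theta-\sqrt n|}\le Cn^{\delta/4-3/8}e^{\gamma|\theta-\sqrt n|}$ for all large $n$, uniformly in $\delta\in[\delta_0,\tfrac12]$ (the finitely many remaining $n$ being absorbed into $C$), so both inequalities of the Lemma hold \emph{a fortiori}.

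Putting the two regimes together (with $\gamma=\max(C,3)$) gives the estimates in the form of Lemma~\ref{bescontour}, with $\sqrt[4]{2\sqrt n-x}$; Lemma~\ref{bescontourtwo} follows on inserting $2\sqrt n-x\ge Kn^{\delta}$ in part~(1) and identifying $n^{\delta}$ with the scale $2\sqrt n-x$ in part~(2). The reduction to $x>0$ uses the symmetry $J_{-m}=(-1)^{m}J_{m}$ already noted. The only genuinely delicate step is the first one — controlling the error term of the Laplace approximation uniformly as the two saddle points run down towards the edge, which is exactly where $\delta_0>\tfrac16$ enters and is the heart of the matter — whereas the rest is routine bookkeeping with the contour integral.
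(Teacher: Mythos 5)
Your proposal is essentially sound and rests on the same engine as the paper's proof: a saddle-point analysis of the classical contour integral for $J_x$, with the hypothesis $\delta_0>\frac16$ entering exactly where you place it, in the comparison of the cubic error with the Gaussian width as the saddles approach the edge (via $\sin\varphi_u\gtrsim n^{\delta/2-1/4}$). The genuine difference is the treatment of the complex poissonization parameter. You let the saddles move with $\theta$, control $\Re\Psi(t_\pm(\theta))$ by integrating $\frac{d}{d\theta}\Psi(t_\pm(\theta))=t_\pm-t_\pm^{-1}$ from $\sqrt n$ to $\theta$, and dispose of the regime $|\theta-\sqrt n|>c(2\sqrt n-x)$ by the trivial bound $|J_x(2\theta)|\le e^{2|\Im\theta|}$; the paper instead keeps a single $\theta$-independent contour through the saddles $e^{\pm i\varphi_u}$ of $S(z)=z-z^{-1}-u\log z$ (two short tilted segments of half-length $n^{-\frac18-\frac{\delta}{4}+\varepsilon}$ closed by circular arcs on which $\Re S\le -Cn^{\varepsilon_0}$) and absorbs all the $\theta$-dependence into an explicitly bounded factor such as $e^{(\theta-\sqrt n)(z-z^{-1})}$, so that no regime splitting and no perturbation of the saddle are needed; what your route buys is a more standard-looking steepest-descent argument, what it costs is that a complete write-up must still supply the global part of the deformed contour (the analogue of the paper's arcs $O^{\pm}$), which your sketch leaves implicit. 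Two smaller remarks: for part (2), the assertion that the estimate ``picks up the factor $|1-t_\pm|$'' conceals either a parity cancellation (the paper splits $1-z=(1-e^{i\varphi_u})+(e^{i\varphi_u}-z)$ and uses that the leading piece of the second term is odd in $t$, hence integrates to zero) or the observation that on the short saddle segment one has $|1-z|\lesssim n^{\delta/2-1/4}$ anyway once $\delta_0>\frac16$; and your closing step of ``identifying $n^{\delta}$ with the scale $2\sqrt n-x$'' in part (2) has the same looseness as the paper's own claim that Lemma \ref{bescontourtwo} is an equivalent reformulation of Lemma \ref{bescontour} (the bound $(2\sqrt n-x)^{1/4}n^{-3/8}$ is the form actually used later), so this is an imprecision you share with the paper rather than a gap you introduced.
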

Throughout the proof, the symbols $C$ and $\gamma$ will denote constants depending only on $\delta_0$ and $K$.
Let $\mathcal K$ be a contour going around $0$ counterclockwise once.
We then have the following integral representation for the Bessel function:
\[
J_x (2\theta) = \frac{1}{2\pi i} \oint\limits_{\mathcal K} e^{\theta (z - z^{-1})} \cdot z^{-x-1} \, dz
\]
We choose $\mathcal K$ as follows. Take $\varepsilon > 0$ sufficiently small. Denote $u = \frac{x+1}{\sqrt{n}}$ and introduce the angle $\varphi_u$,  $0\leq
\varphi_u \leq \frac{\pi}{2}$, by the formula
\[
2\cos \varphi_u = u.
\]
Introduce the arcs $I^+,\, I^-$ by the formulas:
\[
 \begin{array}{lc}
I^+ = e^{i \varphi_u} + \frac{1+i}{\sqrt{2}} e^{i \varphi_u} \cdot t, &\qquad |t| \le n^{-\frac{1}{8}-\frac{\delta}{4}+ \varepsilon} \\
\\
I^- = e^{-i \varphi_u} + \frac{1-i}{\sqrt{2}} e^{-i \varphi_u} \cdot t, &\qquad |t| \le n^{-\frac{1}{8}-\frac{\delta}{4}+ \varepsilon}
\end{array}
\]

\begin{figure}[h!]
\centering
\includegraphics{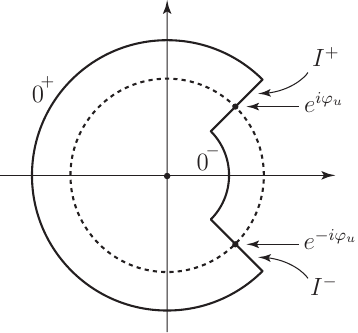}
\caption{}
\label{fig:Fig01}
\end{figure}

We now complete the contour $\mathcal K$ by drawing a circle arc $O^+$ counterclockwise from the outer endpoint of $I^+$ to the outer endpoint of $I^-$, and, similarly, drawing a circle arc $O^-$ counterclockwise from the inner endpoint of $I^-$ to the inner endpoint of $I^+$ (see Fig. \ref{fig:Fig01}). We estimate the contribution of each arc separately and show that the arcs $I^+,\, I^-$ give the main contribution, while the contribution of the arcs $O^+, \, O^-$ is negligible. We start with the arc $I^+$.  Write
\[
e^{\theta(z - z^{-1})} \cdot z^{-x-1} = e^{{\sqrt{n}}(z - z^{-1} - u\log z)}\cdot e^{(\theta-{\sqrt{n}})(z-z^{-1})},
\]
and denote
\[
S(z) = z - z^{-1} - u\log z.
\]
Write
\[
z_t = e^{i\varphi_u} + \frac{1+i}{\sqrt{2}} e^{i\varphi_u} \cdot t
\]
and denote $\widetilde S(t) = S(z_t)$.

Since
\[
\frac{dS}{dz} \Big|_{z=e^{i\varphi_u}} = \,0, \qquad
\frac{d^2 S}{dz^2}\Big|_{z=e^{i\varphi_u}} =\, 2i \sin \varphi_u \cdot e^{-2i\varphi_u}
\]
and since there exists a constant $C$ such that
\[
\left| \frac{d^3 S}{dz^3} \right| \le C \qquad \text{for all} \quad z \in I^+,
\]
one can write
\[
\widetilde S(t) = - \sin \varphi_u \cdot t^2 + A(t) t^3,
\]
where $\left| A(t) \right| \le C$ provided $\left| t \right| \le n^{-\frac{1}{8}- \frac{\delta}{4} + \varepsilon}$.

Consequently,
\[
\int\limits_{I^+} e^{\theta(z - z^{-1})} \cdot z^{-x-1}\, dz = \int\limits_{I^+} e^{(\theta - \sqrt{n})(z_t - z_t^{-1})}\cdot e^{\sqrt{n}\,\widetilde S(t)} \, dt=
\]
\[
= \int\limits_{I^+} e^{(\theta - \sqrt{n})(z_t - z_t^{-1})} \cdot e^{-\sqrt{n} \sin \varphi_u \cdot t^2} \cdot e^{\sqrt{n}\, A(t) \cdot t^3} \, dt.
\]
Noting that $z_t - z_t^{-1}$ is bounded on $I^+$, that we have
\[
\left| \sqrt{n}\cdot A(t) \cdot t^3 \right| \le C\cdot n^{\frac{1}{8} - \frac{3\delta}{4} + 3\varepsilon},
\]
where the exponent is negative as long as $\varepsilon < \frac{\delta_0}{4} - \frac{1}{24}$ (here we use the condition $\delta_0 > \frac{1}{6}$), and that $$
\sin\varphi_u \ge C\cdot n^{\frac{\delta}{2}-\frac{1}{4}},
$$
we arrive at the estimate
\[
\left| \; \int\limits_{I^+} e^{\theta(z-z^{-1})}\cdot z^{-x-1}\, dz \right| \; \le \; \frac{C e^{\gamma \left|\theta - \sqrt{n}\right|}}{n^{\frac{1}{8}+ \frac{\delta}{4}}}.
\]
The contribution of the arc $I^{-}$ is estimated in the same way. It remains to estimate the contributions of the circular arcs $O^+$ and $O^-$. Our aim is to show that there exists $\varepsilon_0>0$ such that
\[
\mathfrak{Re} \left( S(z) \right) < -C n^{\varepsilon_0}.
\]
for all $z \in O^+\cup O^-$. We only show it for $O^+$, as the case of $O^-$ is completely similar.

For $z \in O^+$ write
\[
z = r_z e^{i\varphi_z}.
\]
There exist positive constants $C_1, C_2, C_3$ such that
\begin{eqnarray} \label{rphi}
r_z & \ge & 1 + C_1 n^{-\frac{\delta}{4} - \frac{1}{8} + \varepsilon}, \nonumber \\
r_z & \le & 1 + C_2 n^{-\frac{\delta}{4} - \frac{1}{8} + \varepsilon}, \\
\varphi_z - \varphi_u & \ge & C_3 n^{-\frac{\delta}{4} - \frac{1}{8} + \varepsilon}. \nonumber
\end{eqnarray}
Consider the function
\[
S^\# (t) = \mathfrak{Re} S\left( te^{i\varphi_z} \right), \quad t\in[1,r_z].
\]
Note that $S^\#(1)=0$. From (\ref{rphi}) we have:
$$
\frac{dS^\#}{dt}\Big|_{t=1} \leq -C_{11} n^{\frac{\delta}{4} - \frac{3}{8} + \varepsilon}, \qquad
\left|\frac{d^2S^\#}{dt^2} \Big|_{t=1}\right| \le C_{12} n^{\frac{\delta}{2} - \frac{1}{4}}, \qquad
\max_{t \in [1,r_z]} \left| \frac{d^3 S^\#}{dt^3} \right| \le {C}_{14},
$$
whence there exists $\varepsilon_0>0$ such that
\[
S^\# (r_z) \le - C_{17}n^{\varepsilon_0}
\]
for all $z \in O^+$. We conclude that
\[
\left|\; \int\limits_{O^+} e^{\theta(z - z^{-1})} \cdot z^{-x-1} \, dz \right| \le e^{\gamma \left| \theta - \sqrt{n} \right| -Cn^{\varepsilon_0}}.
\]
The case of $O^-$ is similar, and the first claim of the Lemma is proved completely.

We proceed to the proof of the second claim. Write
\[
J_x(2\theta) - J_{x-1} (2\theta) = \frac{1}{2\pi i} \oint\limits_\mathcal K e^{\theta(z - z^{-1})} \cdot z^{-x-1}\cdot (1-z) \, dz.
\]
The contour ${\mathcal K}$ stays the same.
We first estimate the contribution of the interval $I^+$. Write
\[
1-z = 1-e^{i\varphi_u} +e^{i\varphi_u} -z,
\]
and note that, since $$\left| 1 - e^{i\varphi_u} \right| \le Cn^{\frac{2\delta-1}{4}},$$
we have
\[
\left| \; \int\limits_{I^+} e^{\theta(z-z^{-1})} \cdot z^{-x-1} \cdot \left( 1-e^{i\varphi_u} \right) \, dz \right| \; \le \;
\frac{Ce^{\gamma \left| \theta - \sqrt{n}\right|}}{n^{\frac{3}{8}-\frac{\delta}{4}}}.
\]
This is the main contribution. We proceed to the analysis of the remaining terms and estimate
\[
\int\limits_{I^+} e^{\theta (z - z^{-1})} \cdot z^{-x-1}\cdot \left( e^{i\varphi_u}-z\right) \, dz.
\]
Write
\[
e^{\theta (z - z^{-1})} \cdot z^{-x-1} = e^{\theta (z - z^{-1} -u\log z)}\cdot e^{(\theta - \sqrt{n}) u \log z}.
\]
The notation $t, \,z_t$ and $A(t)$ has the same meaning as before, and we write
\begin{multline} \label{oddity}
e^{\theta( z_t-z_t^{-1} - u\log z_t )} \cdot e^{(\theta - \sqrt{n}) u \log z_t} \cdot \left( e^{i \varphi_u} - z_t \right) =\\
= e^{-\theta \sin{\varphi_u} \cdot t^2} \left( e^{i (\theta - \sqrt{n}) u\varphi_u} \right) \cdot \frac{1+i}{\sqrt{2}} e^{i \varphi_u} \cdot t \; +\\
+ \; e^{-\theta \sin \varphi_u \cdot t^2} \left( e^{(\theta- \sqrt{n}) u \log z_t + A(t) \cdot t^3} - e^{i (\theta - \sqrt{n})u \varphi_u} \right) \times \frac{1+i}{\sqrt{2}} \, e^{i\varphi_u} \cdot t.
\end{multline}
Recall that
\[
dz_t = \frac{1+i}{\sqrt{2}} e^{i\varphi_u} \, dt
\]
and note that the first summand in the right hand side of (\ref{oddity}) is an {\it odd} function of $t$, so its integral over $I^+$ is zero. We now estimate the second summand in absolute value. Note that
\[
e^{(\theta - \sqrt{n})u\log z_t + A(t)\cdot t^3} \Big|_{t=0} = e^{i(\theta - \sqrt{n}) u \varphi_u}
\]
and that on $I^+$ we have an estimate
\[
\left| \frac{d}{dt} \, e^{(\theta - \sqrt{n})u\log z_t + A(t)\cdot t^3} \right| \le C_{20}e^{\gamma \left| \theta - \sqrt{n}\right|}.
\]
 Furthermore,
\[
\left| e^{-\theta \sin \varphi_u \cdot t^2} \right| \le e^{-\frac{\sqrt{n}}{2}\sin \varphi_u \cdot t^2}.
\]
In view of all the above, we have
\[
\left| \, e^{-\theta \sin \varphi_u \cdot t^2} \left( e^{(\theta-\sqrt{n}) u \log z_t + A(t) \cdot t^3} -e^{i (\theta-\sqrt{n}) u \varphi_u} \right) \right| \le C e^{-\frac{\sqrt{n}}{2} \sin \varphi_u \cdot t^2} \cdot \left| t \right| \cdot e^{\gamma \left| \theta - \sqrt{n} \right|}.
\]
Noting that
\[
\int\limits_{\left| t \right| \le n^{\frac{\delta}{2}-\frac{1}{4}+\varepsilon}} e^{-\frac{\sqrt{n}}{2}\sin \varphi_u \cdot t^2} \cdot t^2 \, dt \le \frac{C}{n^{\frac{3}{8}+\frac{3\delta}{4}}}\, ,
\]
we conclude that the contribution of the second summand in (\ref{oddity}) is bounded above by
\[
\frac{Ce^{\gamma \left| \theta - \sqrt{n} \right|}}{n^{\frac{3}{8} + \frac{3\delta}{4}}},
\]
and so is negligible compared to the main contribution.
Finally, we have
\[
\left| \;\int\limits_{I^+} e^{\theta (z - z^{-1})}\cdot z^{-x-1} \cdot (1-z) \, dz \, \right| \, \le \, \frac{C e^{\gamma \left| \theta - \sqrt{n} \right|}}{n^{\frac{3}{8}-\frac{\delta}{4}}}.
\]
The contribution of $I^-$ is estimated in the same way, and the contribution of the circular arcs is shown to be negligible in exactly the same way as in the proof of the first Claim. The Lemma is proven completely.

\subsection{Proof of Lemma \ref{besselmain}.}

We start with Okounkov's integral formula for the discrete Bessel kernel \cite{Okounkov}. We take any positive numbers $\alpha_1> \alpha_2 >0$ and write
\begin{equation} \label{okbes}
J(x,y;\, \theta^2) = \frac{1}{(2\pi i)^2}
\int\limits_{\left| z\right| = \alpha_1} \int\limits_{\left| w \right| = \alpha_2}
\frac{e^{\theta(z- z^{-1}-w+w^{-1})}}{(z-w) z^{x+1}w^{-y}}\, dz\,du.
\end{equation}
As before, we define $\varphi_u$ by formula $2\cos \varphi_u =u$, and we set
\[
S(z,u) = z-z^{-1} - u\log z
\]
(the principal branch of the logarithm is taken here). Setting $x=y=u\sqrt{n}$, we rewrite (\ref{okbes}) as follows
\[
J(x,x;\, \theta^2) = \frac{1}{(2\pi i)^2}
\int\limits_{\left| z\right| = \alpha_1} \int\limits_{\left| w \right| = \alpha_2}
\frac{e^{\theta(S(z,u)- S(w,u)) + (\theta - \sqrt{n})u(\log z - \log w)}}{(z-w) z}\, dz\,dw.
\]
Now, following Okounkov \cite{Okounkov}, we deform the contour of integration and obtain an integral representation for the quantity
\[
J(x,x; \, \theta^2) - \frac{\varphi_u}{\pi} \;=\;
J(x,x; \, \theta^2) - \frac{\arccos{\frac{x}{2\sqrt{n}}}}{\pi}.
\]
We take $\varepsilon >0$ sufficiently small and introduce the intervals $I_z^+$, $I_z^-$ as before:
\[
I^{\pm}_z = e^{\pm\, i \varphi_u} + \frac{1\pm i}{\sqrt{2}} e^{\pm\, i\varphi_u} \cdot t, \qquad \left| t \right| \le n^{\frac{1}{8} - \frac{\delta}{4} +\varepsilon}
\]
As before, we complete the contour by drawing a circular arc $O_z^+$ joining the outer endpoints of $I^\pm_z$ and a circular arc $O_z^-$ joining the inner endpoints of $I_z^\pm$. The resulting contour is denoted $\mathcal K_z$.

We now introduce the intervals $I_w^+, \, I_w^-$ by the formulas
\begin{alignat*}{3}
I_w^+&\;=\; e^{i\varphi_u} &\;=\; \frac{1-i}{\sqrt{2}} \,e^{i\varphi_u} \,t  \qquad\quad &\left| t\right| \le n^{-\frac{1}{8} - \frac{\delta}{4} + \varepsilon}; \\
I_w^-&\;=\; e^{-i\varphi_u} &\;=\; \frac{1+i}{\sqrt{2}} \,e^{i\varphi_u} \,t \qquad\quad &\left| t\right| \le n^{-\frac{1}{8} - \frac{\delta}{4} + \varepsilon}.
\end{alignat*}
In a similar way, we join the outer and the inner endpoints of $I^\pm_w$, respectively, by circle arcs $O_w^+$ and $O_w^-$.
\begin{figure}[h!]
\centering
\includegraphics{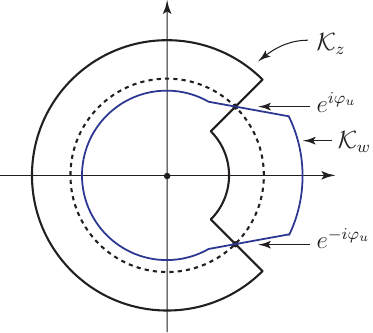}
\caption{}
\label{fig:Fig02}
\end{figure}

Okounkov \cite{Okounkov} showed that
\begin{equation} \label{okbes2}
J(x,x; \, \theta^2) -\frac{\arccos \frac{x}{2\sqrt{n}}}{\pi} \quad=\quad \frac{1}{(2\pi i)^2}
\int\limits_{\mathcal K_z}\int\limits_{\mathcal K_w}
\frac{e^{\theta\left( S(z,u) - S(w,u) \right)+ (\theta - \sqrt{n})u (\log z - \log w)}}{z(z-w)} \, dz\,dw.
\end{equation}
We estimate the right-hand side of (\ref{okbes2}), and we begin by estimating
\begin{equation} \label{rect}
\int\limits_{I^+_z}\int\limits_{I^+_w}
\frac{e^{\theta\left( S(z,u) - S(w,u) \right)+ (\theta - \sqrt{n})u (\log z - \log w)}}{z(z-w)} \, dw\,dz.
\end{equation}
As before, we write
\begin{alignat*}{2}
z_t &\;=\; e^{i\varphi_u} + \frac{1+i}{\sqrt{2}} \,e^{i\varphi_u} \,t,\\
w_s &\;=\; e^{i\varphi_u} +  \frac{1-i}{\sqrt{2}} \,e^{i\varphi_u} \,s,
\end{alignat*}
and
\begin{alignat*}{3}
S(z_t, u) &= -&&\sin \varphi_u \cdot t^2 \;+\; A(t) \cdot t^3,\\
S(w_s,u) &= &&\sin \varphi_u \cdot s^2 \;+\; \widetilde A(s) \cdot s^3.
\end{alignat*}

We set
\[
I(n) = \left[ -n^{-\frac{1}{8} -\frac{\delta}{4} + \varepsilon}, n^{-\frac{1}{8} -\frac{\delta}{4} + \varepsilon} \right]
\]
and estimate the integral
\[
\text{Int}(n) \;=\; \int\limits_{I(n)} \int\limits_{I(n)}
\frac{e^{-\theta \sin \varphi_u (t^2+s^2) - \theta ( A(t) \cdot t^3 + \widetilde A(s) \cdot s^3) + (\theta - \sqrt{n}) u (\log z_t - \log w_s)}}
{(t+is)\left( 1+\frac{1+i}{\sqrt{2}}\,t\right)} \, ds\,dt.
\]
Since $I(n)$ is symmetric around the origin, we have
\[
\int\limits_{I(n)} \int\limits_{I(n)}
\frac{e^{-\theta \sin \varphi_u (t^2 + s^2)}}{t+is} \,ds\,dt = 0,
\]
whence
\[
\text{Int}(n) \;=\; \int\limits_{I(n)} \int\limits_{I(n)}
\frac{e^{-\theta \sin \varphi_u (t^2+s^2)}}{(t+is)}
\left[ \frac{e^{( A(t) \cdot t^3 + \widetilde A(s) \cdot s^3) + (\theta - \sqrt{n})  (\log z_t - \log w_s)}}
{1+\frac{1+i}{\sqrt{2}}\,t} - 1 \right]\, ds\,dt.
\]
We estimate the integrand in absolute value. It is clear that
\[
\left| \frac{\partial}{\partial t} \left(
\frac{\exp\left(-\theta( A(t) \cdot t^3 + \widetilde A(s) \cdot s^3) + (\theta - \sqrt{n})  (\log z_t - \log w_s)\right)}
{1+\frac{1+i}{\sqrt{2}}\,t}
\right) \right|
\;\le\; C e^{-\,\gamma \left| \theta - \sqrt{n} \right|},
\]
\[
\left| \frac{\partial}{\partial s} \left(
\frac{\exp\left(-\theta( A(t) \cdot t^3 + \widetilde A(s) \cdot s^3) + (\theta - \sqrt{n})  (\log z_t - \log w_s)\right)}
{1+\frac{1+i}{\sqrt{2}}\,t}
\right) \right|
\;\le\; C e^{-\,\gamma \left| \theta - \sqrt{n} \right|},
\]
\[
\left| \frac{\exp\left(-\theta \sin{\varphi_u}\cdot (s^2 + t^2)\right)}{t+is} \right| \;\le\;
\frac{\exp\left(\frac{-\sqrt{n}}{2}\sin{\varphi_u} \cdot (s^2+t^2)\right)}{\sqrt{t^2+s^2}},
\]
whence
\[
\big| \text{Int}(n) \big| \;\le\; Ce^{\gamma \left| \theta - \sqrt{n} \right|}\cdot
\int\limits_{I(n)} \int\limits_{I(n)}
\frac{e^{\frac{-\sqrt{n}}{2}\sin \varphi_u \cdot (t^2+s^2)}}{\sqrt{t^2+s^2}} \left( \left| t \right| + \left| s\right| \right) \, dt\,ds \;\le\;
\frac{\widetilde C e^{\gamma \left| \theta - \sqrt{n}\right|}}{2 \sqrt{n} -x}.
\]
The contributions of the integrals along the remaining rectangular arcs are estimated in the same way, whereas the contribution of those parts of contours where either $z \in O_z^\pm$ or $w \in O_w^\pm$ is immediately seen to be majorated by
\[
C \exp({\gamma_1 \left| \theta - \sqrt{n} \right| - \gamma_2 n^{\delta_2}})
\]
with $\delta_2>0$ depending only on $\delta$. Lemma \ref{besselmain} is proved completely.

Theorem \ref{main} is proved completely.

\end{document}